\numberwithin{equation}{section}
\newcommand\norm[1]{\left\lVert#1\right\rVert}
\newtheorem{defi}{Definition}[section]
\newtheorem{lem}[defi]{Lemma}
\newtheorem*{asum*}{Assumption}
\newtheorem{rem}[defi]{Remark}
\newtheorem{col}[defi]{Corollary}
\newtheorem{theorem}[defi]{Theorem}
\newtheorem{propo}[defi]{Proposition}
\newtheorem*{theorem*}{Theorem}
\begin{document}
\title{Infinite variance $H$-sssi processes as limits of particle systems}
\author{\L ukasz Treszczotko\footnote{Institute of Mathematics, University of Warsaw, Banacha 2 02-097 Warsaw}\\
\href{mailto:lukasz.treszczotko@gmail.com}{lukasz.treszczotko@gmail.com} }

\maketitle

\begin{abstract}
We consider a particle system with weights and the scaling limits derived from its occupation time. We let the particles perform independent recurrent L\'{e}vy motions and we assume that their initial positions and weights are given by a Poisson point process. In the limit we obtain a number of recently discovered stationary stable self-similar processes studied in~\cite{SAM1} and~\cite{SAM2} as well as a new class of such processes.     
\end{abstract}

{\bf Keywords:} local times, L\'{e}vy processes, stable self-similar processes, particle systems 
\\

\textup{2000} \textit{Mathematics Subject Classification}: \textup{Primary: 60G18} \textup{Secondary: 60F17}

\section{Introduction}

\subsection{The model and the particle system}

The basis of our model is a system of moving particles to which we attach signed weights. Roughly speaking our intention is that the particles are to perform some sort of recurrent motion in $\mathbb{R}$ and that the weights are drawn from a heavy-tailed distribution, so that only non-Gaussian processes may arise as scaling limits of the system in question. The precise description of the particular model variations we intend to study will be given Section~\ref{RS}. 

We consider the following particle system: at time $t\geq 0$ the position of the particles is given by a $(x_j+\eta^j_t)$ and their \emph{weights} are given by $(z_j)$, where $(x_j,z_j)$ are points of a Poisson point process on $\mathbb{R}_+\times \mathbb{R}$ with intensity measure $dt\otimes \nu_{\alpha, \epsilon}(dz)$. $dt$ is the Lebesgue measure and $\nu(dz)$ is a L\'{e}vy measure possessing density which is with exponent $\beta \in (1,2)$. The $\epsilon$ term indicates the fact that what is to follow we discard the weights with absolute value less than $\epsilon$, with $\epsilon$ being an arbitrary positive constant. Finally, we assume that the particles move according to independent symmetric L\'{e}vy processes whose L\'{e}vy measures behave as that of the symmetric $\beta$-stable L\'{e}vy process. For details see the beginning of Section~\ref{RS}.

with $\beta\in (1,2)$ (this will be generalized later on-see the beginning of Section~\ref{RS}). The system of particles at any given point in time $t\geq 0$ is then given by $(x_j+\eta_t^j, z_j)$, where $x_j+\eta_t^j$ represent the the positions of particles and $z_j$'s - their weights . The functional at the centre of our investigations is given by
\begin{equation}\label{FC}
G_t^T:=\frac{1}{F_T}\sum_j z_j\mathbf{1}_{\{|z_j|>\epsilon\}}\int_0^{Tt}\phi(x^j+\eta^j_u)du,
\end{equation}
where $\phi \in L^1(\mathbb{R})$, $T,\epsilon>0$ and $F_T$ is a normalizing constant which will change in different situations. As we shall see, the possible limits, as $T\rightarrow \infty$ of the processes $(G_t^T)_{t\geq 0}$ change drastically depending on whether $\int_\mathbb{R}\phi(y)dy\neq 0$ or not. Moreover, if $\int_\mathbb{R}\phi(y)dy=0$, then the asymptotic behaviour of $\phi$ as $y\rightarrow \infty$ and $y\rightarrow -\infty$ determines which of the number of possible limits are obtained. It will become evident that after appropriate limiting procedures the choice of $\epsilon$ becomes irrelevant. In some sense the behaviour of the limit processes is completely determined by particles with large \emph{weights}.

\subsection{First order asymptotics}

We show that for $\alpha, \beta \in (1,2)$, in the case $\int_\mathbb{R}\phi \neq 0$, the processes $G^T$ given by~\eqref{FC}, with a suitable normalization $F_T$, converge in law to an $\alpha$ stable self-similar process with stationary increments, the so called \emph{$\beta$-stable local time fractional $S\alpha S$ motion}. It has the following integral representation:
\begin{equation}\label{G1}
X=\left(\int_{\mathbb{R}\times \Omega'}L_t(x,\omega')M_\alpha(dx,d\omega')\right)_{t\geq 0},
\end{equation}
where $(L_t(x,\omega'))_{t\geq 0,x \in \mathbb{R}}$ is a jointly continuous version of the local time of the $\beta$-stable L\'{e}vy motion (defined on some probability space $(\Omega',\mathcal{F}',\mathbb{P}')$) and $M_\alpha$ is a symmetric $\alpha$-stable random measure on $\mathbb{R}\times \Omega'$ with control measure $\lambda_1 \otimes \mathbb{P}'$, which is itself defined on some other probability space $(\Omega,\mathcal{F},\mathbb{P})$. Due to the nature of the functional~\eqref{FC}, we will frequently encounter representations of this type in this paper.

In~\cite{DG} the process~\eqref{G1} was obtained in the so called \emph{random rewards schema} (see~\cite{CS}) or \emph{random walks in random scenery models} (see~\cite{DG}). Following~\cite{DG} these models can be described in the following way. Assume that there is a \emph{user} moving randomly on the \emph{network} which earns random rewards (governed by the random scenery) associated to the points in the network that they visit. The quantity of interest is then the total amount of rewards collected. The concrete model considered in~\cite{DG} goes as follows. Assume that that the movement of the user is a random walk on $\mathbb{Z}$ which after suitable scaling converges to the $\beta$-stable L\'{e}vy process with $\beta \in (1,2]$. Furthermore, let the random scenery be given by i.i.d. random variables $(\xi_j)_{j\in \mathbb{Z}}$ which belong to the normal domain of attraction of a strictly stable distribution with index of stability $\alpha \in (0,2]$. Then the \emph{random walk in random scenery} is given by
\begin{equation}
Z_n=\sum_{k=1}^n\xi_{S_k},
\end{equation}
where $S_k=\sum_{j=1}^k X_k$ is the random walk determining the movement of the user. If we consider a large number of independent \emph{random walkers} moving in independent random sceneries, then the scaling limit in the corresponding functional limit theorem (see Theorem 1.2 in~\cite{DG}) leads to the process~\eqref{G1}.

The process~\eqref{G1} was then investigated in~\cite{SAM1} where it arose as a limit of partial sums of a stationary and infinitely divisible process $(X_n)_{n=1}^\infty$ given by
\begin{equation}\label{se1}
X_n=\int_{E}f_n(x)dM(x) 
\end{equation}
where $M$ is a symmetric homogenous infinitely divisible random measure on some measurable space $(E,\mathcal{E})$ with a $\sigma$-finite control measure $\mu$ and local L\'{e}vy measure which is regularly varying at infinity with index $\alpha \in (0,2)$. The $f_n$'s are deterministic functions such that $f_n(x)= f(T^n(x))$ for some ergodic conservative measure preserving map on $(E,\mathcal{E},\mu)$ possessing a Darling-Kac set with a normalizing sequence regularly varying with exponent $\widetilde{\beta} \in (0,1)$. Crucially, it was also assumed that $\int_{E}f(x)\mu(dx)\neq 0$. For details see Theorem 5.1 in~\cite{SAM1} and for general ergodic-theoretical introduction to this setting see Chapter 3 in~\cite{SPLRD}. The parameter $\alpha$ here is the same as in the random walk in random scenery model and the parameters $\beta$ and $\widetilde{\beta}$ satisfy $\beta=1/(1-\widetilde{\beta})$.

\subsection{Second order asymptotics}\label{I2}

The case when the integral $\int_\mathbb{R}\phi(y)dy=0$ is more complicated as the norming $F_T$ and the limit of the process $G^T$ given by~\eqref{FC} depends on finer properties of the function $\phi$. In this case we only consider models with particles moving according to $\beta$-stable L\'{e}vy motions. When $\phi$ has relatively light tails then we show (see Theorem~\ref{THM2}) that $G^T$ converges in the sense of finite-dimensional distributions to a process which belongs to a class of stable self-similar processes recently introduced by Jung, Owada and Samorodnitsky in~\cite{SAM2}. Members of this class have the following integral representation
\begin{equation}\label{G3}
(Y_{\alpha, \beta, \gamma}(t))_{t\geq 0}=\Bigg(\int_{\Omega'\times \mathbb{R}}S_\gamma(L_t(x,\omega'),\omega')dM_\alpha(\omega',x)\Bigg)_{t\geq 0},
\end{equation}
where $(L_t(x))_{t\geq 0}$ is the local time of a symmetric $\beta$-stable L\'{e}vy motion and $S_\gamma$ is an independent symmetric $\gamma$-stable L\'{e}vy motion. Both of these processes are defined on some probability space $(\Omega',\mathcal{F}',\mathbb{P}')$) and $M_\alpha$ is a symmetric $\alpha$-stable random measure on $\Omega'\times\mathbb{R}$ with control measure $\mathbb{P}'\otimes \lambda_1$. Finally, the parameters $\alpha, \beta$ and $\gamma$ satisfy $1<\beta<2$ and $1<\alpha<\gamma<2$.

The model presented in~\cite{SAM2} is basically the same as the one presented in~\cite{SAM1} with one crucial difference being that the function $f$ from the discussion below~\eqref{se1} is such that $\int_{E}f(x)\mu(dx)=0$. Under some  conditions on the function $f$ (see Chapter 4 in~\cite{SAM2}), the limit process of a suitably normalized sequence as in~\eqref{se1} was shown to belong to a class of $H$-sssi stable processes which have an integral representation given by
\begin{equation}\label{G2}
Y_{\alpha,\widetilde{\beta},\gamma}(t):=\int_{\Omega'\times [0,\infty)}S_\gamma(M_{\widetilde{\beta}}((t-x)_+,\omega'),\omega')dZ_{\alpha,\widetilde{\beta}}(\omega',x),\quad t \geq 0,
\end{equation}
where 
\begin{equation*}
0<\alpha<\gamma\leq 2, 0\leq \widetilde{\beta}< 1,
\end{equation*}
$(S_\gamma(t,\omega'))_{t\geq 0}$ is a symmetric $\gamma$-stable L\'{e}vy motion and $(M_{\widetilde{\beta}}(t,\omega'))_{t\geq 0}$ is an independent $\widetilde{\beta}$-Mittag-Leffler process (see section 3 in~\cite{SAM1} for more on the latter). Both of these processes are defined on a probability space $(\Omega',\mathcal{F}',\mathbb{P}')$. Finally $Z_{\alpha,\beta}$ is a $S\alpha S$ random measure on $\Omega' \times [0,\infty)$ with control measure $\mathbb{P}'\otimes \nu_{\widetilde{\beta}}$, where $\nu_{\widetilde{\beta}}(dx)=(1-\widetilde{\beta})x^{-\widetilde{\beta}}\mathbf{1}_{x\geq 0}dx$. By Proposition 3.2 in~\cite{SAM2} the process $Y_{\alpha,\widetilde{\beta},\gamma}$ is $H$-sssi with Hurst coefficient $H=\widetilde{\beta}/\gamma+(1-\widetilde{\beta})/\alpha$. Here we use $\widetilde{\beta}$ instead of $\beta$ so as not to confuse it with the notation we have adopted for this paper. Similarly as in the proof of (3.10) in~\cite{SAM1} we can show that for $\widetilde{\beta}\in (0,\frac{1}{2})$ the process~\eqref{G2} has the same law as~\eqref{G3} with $\beta=(1-\widetilde{\beta})^{-1}$.

The limit process obtained in~\cite{SAM2} corresponds to $\gamma=2$ in~\eqref{G2}. Note that, as far as we know, no relatively natural model is known to yield $Y_{\alpha,\widetilde{\beta},\gamma}$ for $\gamma<2$. In our paper the process given by~\eqref{G3} is obtained as a limit of the functional~\eqref{FC} with $\alpha,\beta \in (1,2)$ and $\gamma=2$ (see Theorem~\ref{THM2}).

In our investigation of the behaviour of the process $G^T$ with $\int_\mathbb{R}\phi(y)dy=0$ we needed some extensions of the results of Rosen in~\cite{ROS} concerning occupation times of stable L\'{e}vy processes. These are given in Section~\ref{WER}. In particular we significantly relax the assumptions made on the function $\phi$ in the original formulation in~\cite{ROS}.\\

By considering the case $\int_\mathbb{R}\phi(y)dy=0$ but with $\phi$ having relatively heavy tails, which are regularly varying at infinity, and taking the limit of the process $G^T$ in~\eqref{FC} we obtain a new class of of self-similar stable processes with stationary increments (see Theorem~\ref{THM3}). We conjecture that similar limits may be obtained in the model considered in~\cite{SAM2} when the assumption (4.7) therein fails.\\

Particle models of this form have proven to be a very fruitful tool for providing representations of self-similar processes with stationary increments. See for example~\cite{BOJTALP} and~\cite{BOJTALIV}.

\subsection{Notation}
Here we fix the notation which we are going to use throughout the rest of the paper. For any $\phi \in L^1(\mathbb{R})$ by $\widehat{\phi}$ we denote its Fourier transform, that is
\begin{equation*}
\widehat{\phi}(z)=\int_\mathbb{R}e^{iuz}\phi(u)du.
\end{equation*}
By $\overset{f.d.d}{\Longrightarrow}$ we denote the convergence of finite-dimensional distributions and by $\overset{\mathcal{C}[0,\infty)}{\Longrightarrow}$ weak convergence in $\mathcal{C}[0,\infty)$. By $RV_0(\delta)$ we denote the set of real-valued functions regularly varying at $0$ with exponent $\delta$. By $c_1,c_2,\ldots$ we denote finite positive constants for which we usually specify the variables on which they depend.

\section{Results}\label{RS}

To state the main results of the paper we first need to provide the assumptions that we make regarding the movements of the particles, their initial positions and the weights they carry throughout their lifetimes.

\begin{asum*}[\textbf{A}]\label{asA}
Let $\eta$ be a L\'{e}vy process with L\'{e}vy measure 
\begin{equation*}
\nu(dx)=c(\beta)^{-1}g(x)dx,
\end{equation*}
where $g$ is a symmetric function regularly varying at infinity with exponent $-1-\beta$ for $\beta \in (1,2)$ and $c(\beta)$ is a positive constant equal to
\begin{equation}
\int_\mathbb{R}\left(1-e^{iu}+iu\mathbf{1}_{\{|u|\leq 1\}}\right)\frac{du}{|u|^{1+\alpha}},
\end{equation}
which guarantees that the rescaled L\'{e}vy exponent of $\eta$ converges to the L\'{e}vy exponent of a symmetric $\beta$ stable L\'{e}vy process with unit scale factor. We can write $g(x)=f(x)|x|^{-1-\beta}$ for $x \in \mathbb{R}$, where $f$ is slowly varying at infinity and symmetric. Let $\eta=(\eta_t)_{t\geq 0}$ be a L\'{e}vy process with characteristic triple $(0,0,\nu)$ and characteristic exponent $\psi$. We always assume that $\psi$ satisfies
\begin{equation}
\int_1^\infty \psi(z)^{-1}dz <\infty.
\end{equation}
\end{asum*}

Note that Assumption (\textbf{A}) is clearly satisfied for symmetric $\beta$-stable L\'{e}vy processes with $\beta \in (1,2)$. Moreover it also admits a larger class of L\'{e}vy processes whose $1$-dimensional distributions are in the domain of attraction of symmetric $\beta$-stable law.

\begin{asum*}[\textbf{B}]\label{asB}
Let $(x_j,z_j)$ be a Poisson point process on $\mathbb{R}^2$ with intensity measure $dx \otimes \nu_{\alpha,\epsilon}(dz)$, where $\nu_{\alpha,\epsilon}(dz):=\mathbf{1}_{\{|z|\geq \epsilon\}}\frac{1}{|z|^{1+\alpha}}L(z)dz$, where $\alpha \in (1,2)$ and $L$ is symmetric and slowly varying at infinity. Assume moreover, that $(\eta^j)$ is a family of i.i.d. L\'{e}vy processes such that $\eta_1$ satisfies Assumption (\textbf{A}). Finally let $\phi$ be any function in $L^1(\mathbb{R})$.
\end{asum*}

\begin{rem}
We discard all points of the random measure with small weights (note the $\mathbf{1}_{\{|z_j|>\epsilon\}}$ term). As we will see later, the particular choice of $\epsilon$ is irrelevant as far as the scaling limits are concerned and the part of the process which we cut off always vanishes in the limit.
\end{rem}

In some cases we will also need additional assumptions, which are stated below.

\begin{asum*}[\textbf{C}]\label{asC}
Assume that there exists $\kappa \in (0,1)$ such that the characteristic exponent from Assumption (\textbf{A}) satisfies
\begin{equation} 
\int_1^\infty \psi(w)^{-\kappa}dw <\infty.
\end{equation}
\end{asum*}

\begin{asum*}[\textbf{D}]\label{asD}
Assume that the function $\phi$ satisfies
\begin{equation}\label{zloc1}
\big|\widehat{\phi}(x+y)-\widehat{\phi}(x)\big|\leq C|y|^{\kappa},
\end{equation}
for all $x, y \in \mathbb{R}$ with $\kappa>(\beta-1)/2$ and that $C$ is a constant independent of $x$ and $y$.
\end{asum*}

\begin{rem}
For~\eqref{zloc1} to hold it suffices to assume that
\begin{equation}
\int_\mathbb{R}|\phi(y)||y|^\kappa<\infty
\end{equation}
for some $\kappa>(\beta-1)/2$.
\end{rem}

\subsection{Extension of the occupation time limits for stable processes}\label{WER}

First we provide some extensions of already established results which are the building blocks of our main theorems. Those results are as follows (see~\cite{ROS}).

Assume that $(\xi_t)_{t \geq 0}$ is a symmetric $\beta$-stable L\'{e}vy motion with $\beta \in (1,2)$. Then for any $\phi \in L^1(\mathbb{R})$  we have 
\begin{equation}\label{fol1}
\left(T^{\frac{1-\beta}{\beta}}\int_0^{Tt}\phi(\xi_s-T^{1/\beta}x)ds\right)_{t\geq 0} \overset{\mathcal{C}[0,\infty)}{\Longrightarrow}\left(L_t(x)\int_{\mathbb{R}}\phi(y)dy \right)_{t\geq 0},
\end{equation}
where $(L_t(x))_{t\geq 0, x \in \mathbb{R}}$ is a jointly continuous version of a local time of symmetric $\beta$-stable L\'{e}vy process. If $\int_\mathbb{R}\phi(y)dy =0$ then the limit process of left-hand side of~\eqref{fol1} is trivial and a different normalization is more appropriate. In~\cite{ROS}, Rosen proved that if $\phi$ is a bounded Borel function on $\mathbb{R}$ with compact support such that $\int_\mathbb{R}\phi(x)dx=0$, then we have
\begin{equation}\label{ros1}
\frac{1}{T^{\frac{\beta-1}{2\beta}}}\int_0^{Tt}\phi(\xi_s)ds\overset{C([0,\infty))}{\Longrightarrow}\sqrt{d(\phi,\beta)}W_{L_t(0)},
\end{equation}
as $T\rightarrow \infty$, where $W$ is a Brownian motion independent of $\xi$ and $d(\phi,\beta)$ is a constant.\\

The extensions of the above results are given below. As before, $L_t(x)$ stands for the jointly continuous version of the local time of a symmetric $\beta$-stable L\'{e}vy process.

\begin{propo}\label{PROP1}
Assume that $\eta$ is a L\'{e}vy process satisfying Assumption (\textbf{A}) and $\phi \in L^1(\mathbb{R})$ satisfies $\int_\mathbb{R}\phi(y)dy \neq 0$. Then the following convergence holds 
\begin{equation}\label{EQ1}
\left(\frac{1}{F_T}\int_0^{Tf(T^{1/\beta})^{-1}t}\phi(\eta_s-T^{1/\beta}x)ds\right)_{t\geq 0} \overset{f.d.d}{\Longrightarrow}\left(L_t(x)\int_{\mathbb{R}}\phi(y)dy\right)_{t\geq 0},
\end{equation}
as $T\rightarrow \infty$, with $F_T = T^{-1/\beta+1} f(T^{1/\beta})^{-1}$. If, moreover, Assumption (\textbf{C}) holds, then the convergence holds in $\mathcal{C}[0,\infty)$.
\end{propo}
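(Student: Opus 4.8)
The goal is to prove a functional limit theorem stating that the rescaled occupation time of $\eta$ around the moving point $T^{1/\beta}x$ converges to the local time of a $\beta$-stable process (times $\int \phi$). This is a generalization of Rosen's result (\ref{fol1}) from the exactly-stable case to Lévy processes $\eta$ merely in the domain of attraction of the stable law, as in Assumption (A). The natural strategy is the standard two-step program for weak convergence in $\mathcal{C}[0,\infty)$: (i) establish convergence of finite-dimensional distributions, and (ii) establish tightness, the latter requiring the extra Assumption (C).

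Let me sketch the approach. First I would reduce the problem to the stable case via a coupling/approximation argument. Assumption (A) guarantees that the rescaled Lévy exponent $\psi_T(z) := T f(T^{1/\beta})^{-1}\psi(T^{-1/\beta}z)$ converges to the exponent $|z|^\beta$ of the unit symmetric $\beta$-stable process; this is exactly the content of the normalization by $c(\beta)$ and the slow variation of $f$. So the rescaled process $\eta^T_s := T^{-1/\beta}\eta_{Tf(T^{1/\beta})^{-1}s}$ converges in law (in the Skorokhod sense) to a symmetric $\beta$-stable motion $\xi$.

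Let me think about the core of step (i). After the substitution $s \mapsto Tf(T^{1/\beta})^{-1}s$ and rescaling space by $T^{-1/\beta}$, the integral on the left of~(\ref{EQ1}) becomes, up to the normalization $F_T$,
\[
T^{1/\beta}f(T^{1/\beta})^{-1}\int_0^{t}\phi\big(T^{1/\beta}(\eta^T_s - x)\big)\,ds.
\]
The family $\{T^{1/\beta}\phi(T^{1/\beta}\,\cdot\,)\}_T$ acts, as $T\to\infty$, as an approximate identity concentrating at the origin with total mass $\int_\mathbb{R}\phi(y)\,dy$, so heuristically the integral should converge to $\int_0^t \delta_0(\eta^T_s - x)\,ds\cdot\int\phi = L_t(x)\int\phi$. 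The cleanest way to make this rigorous is via characteristic functions: I would compute the joint Fourier–Laplace transform of the finite-dimensional vector, expressing it through $\widehat{\phi}$ and the resolvent/transition densities of $\eta$, then pass to the limit using the convergence of $\psi_T \to |z|^\beta$ together with the integrability condition $\int_1^\infty \psi(z)^{-1}dz<\infty$, which guarantees existence of a continuous local time and controls the resolvent densities. The method here should parallel Rosen's occupation-time computations in~\cite{ROS} and the local-time convergence arguments, with $\psi$ in place of the exact stable exponent.

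The main obstacle, and where Assumption (C) enters, is tightness. Finite-dimensional convergence to a continuous limit does not by itself give $\mathcal{C}[0,\infty)$ convergence; one needs a moment bound of Kolmogorov type, controlling $\mathbb{E}\big|F_T^{-1}\int_{Tf(T^{1/\beta})^{-1}t_1}^{Tf(T^{1/\beta})^{-1}t_2}\phi(\eta_s - T^{1/\beta}x)\,ds\big|^{p}$ by $|t_2-t_1|^{1+\delta}$ uniformly in $T$. I expect to estimate such an increment by expanding the $p$-th moment (or a suitable $p=2$ second moment combined with the continuity of the limit) into a multiple integral over the transition densities of $\eta$, bound the densities via Fourier inversion using $\int\psi^{-\kappa}<\infty$, and extract the needed Hölder exponent in $t$. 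Assumption (C), the stronger integrability $\int_1^\infty \psi(w)^{-\kappa}dw<\infty$ for some $\kappa\in(0,1)$, is precisely what yields the sharper density bound needed to push the moment estimate past the threshold $1+\delta$ uniformly in $T$; without it one only obtains the finite-dimensional statement. Verifying this uniform moment bound, and checking that the constants do not blow up as $T\to\infty$ despite the slowly varying factor $f(T^{1/\beta})$, is the technically demanding part of the proof.
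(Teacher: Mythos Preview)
Your overall strategy is correct and matches the paper's, but the execution you describe for step (i) is not quite what the paper does, and one of the routes you float would not work.

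The paper does \emph{not} proceed via Skorokhod convergence of the rescaled process $\eta^T$ to the stable motion $\xi$ followed by continuity of the occupation functional; you briefly suggest this before moving on, and it is worth noting explicitly that it would fail, since $u\mapsto \int_0^t \phi(u_s - T^{1/\beta}x)\,ds$ is not continuous in the Skorokhod topology (the approximate-identity heuristic is exactly where continuity breaks down). Nor does the paper compute the characteristic function of the finite-dimensional vector directly. Instead it uses the \emph{method of moments}: for each integer $k$, it writes
\[
\mathbb{E}\big(P_t^T(x)^k\big)=k!\,(2\pi)^{-k}\int_{0<s_1<\cdots<s_k<t}\int_{\mathbb{R}^k}\prod_j \widehat{\phi}\Big(\frac{w_j-w_{j+1}}{T^{1/\beta}}\Big)\,e^{iw_1x}\prod_j e^{-(s_j-s_{j-1})\psi_T(w_j)}\,dw\,ds
\]
via Plancherel, then passes to the limit using $\psi_T(z)\to|z|^\beta$ (Lemma~\ref{clim}). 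The key technical point, which you correctly anticipate as the delicate part, is justifying the passage to the limit: the paper does this by splitting $\mathbb{R}^k$ into a compact box $\{|w_j|\le K\}$ (where dominated convergence applies trivially) and its complement, which is controlled uniformly in $T$ by Lemma~\ref{LE2}, itself a consequence of the regular variation of $\psi$ at $0$ and the integrability $\int_1^\infty\psi^{-1}<\infty$. The limiting integral is then identified with $\mathbb{E}L_t(x)^k\,(\int\phi)^k$ via Lemma~\ref{lmoments}, and since local-time moments determine the law, f.d.d.\ convergence follows. Mixed moments are handled the same way.

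Your tightness outline is exactly right and coincides with the paper's: under Assumption~(\textbf{C}) one proves (Lemma~\ref{LTIGHT1}) the uniform bound $\int_0^t\int_\mathbb{R} e^{-u\psi_T(w)}\,dw\,du\le c_0\,t^\delta$ for some $\delta\in(0,1-1/\beta)$, which feeds into the $k$-th moment of increments to give $\mathbb{E}(P_t^T-P_s^T)^k\le C(t-s)^{k\delta}$; taking $k$ large yields the Kolmogorov exponent $>1$. Your instinct that a second moment alone might not suffice is correct, and the paper indeed takes $k$ large rather than $k=2$.
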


Perhaps more interestingly we prove an extension of the main result of~\eqref{ros1} which greatly relaxes the stringent assumptions made in the original formulation by Rosen in~\cite{ROS}.

\begin{propo}\label{PROP2}
Assume that $\eta$ is a L\'{e}vy process satisfying Assumption (\textbf{A}) and $\phi \in L^1(\mathbb{R})$ satisfies $\int_\mathbb{R}\phi(y)dy = 0$ and Assumption (\textbf{D}). Then the following convergence holds 
\begin{equation}\label{TEQ2}
\left(\frac{1}{F_T^{1/2}}\int_0^{Tf(T^{1/\beta})^{-1}t}\phi(\eta_s-T^{1/\beta}x)ds\right)_{t\geq 0}\overset{f.d.d}{\Longrightarrow} c(\phi)\left(W_{L_t(x)}\right)_{t\geq 0},
\end{equation}
as $T\rightarrow \infty$, and let $F_T = T^{-1/\beta+1} f(T^{1/\beta})^{-1}$, where $W$ is a standard Brownian motion independent of the local time process $(L_t(x))_{x\in \mathbb{R},t\geq 0}$ and 
\begin{equation}
c(\phi)=\frac{1}{\pi}\sqrt{\int_\mathbb{R}\big|\widehat{\phi}(w)\big|^2\psi(w)^{-1}dw}.
\end{equation} 
Moreover, if, additionally, Assumption (\textbf{C}) holds, then the convergence holds in $\mathcal{C}[0,\infty)$.
\end{propo}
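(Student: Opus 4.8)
\section*{Proof proposal for Proposition~\ref{PROP2}}

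The plan is to establish the finite-dimensional convergence by the method of moments, deferring Assumption~(\textbf{C}) to the tightness step. Throughout write $\sigma_T=Tf(T^{1/\beta})^{-1}$ and $a_T=T^{1/\beta}$, so that $F_T=\sigma_T/a_T$ and, after the substitution $s=\sigma_T u$ and $\eta_{\sigma_T u}=a_T\tilde\eta_u$ with $\tilde\eta_u=a_T^{-1}\eta_{\sigma_T u}$,
\[
A_T(t):=\frac{1}{F_T^{1/2}}\int_0^{\sigma_T t}\phi(\eta_s-a_Tx)\,ds=\frac{\sigma_T}{F_T^{1/2}}\int_0^t\phi\bigl(a_T(\tilde\eta_u-x)\bigr)\,du .
\]
Assumption~(\textbf{A}) is exactly what gives the scaling relation $\sigma_T\psi(\cdot/a_T)\to|\cdot|^\beta$ and the convergence of $\tilde\eta$ to the standard symmetric $\beta$-stable motion $\xi$ whose local time is $L$. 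I would first record that, because $\widehat\phi(0)=\int_\mathbb{R}\phi=0$, the mean $\mathbb{E}[A_T(t)]$ is of order $F_T^{-1/2}$ and hence tends to $0$ (here $F_T\to\infty$ since $1-1/\beta\in(0,\tfrac12)$ and $f$ is slowly varying); so it is harmless to work with the uncentred moments of $A_T$.

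The core is the second moment. Using the Markov property and Fourier inversion $\phi(u)=\frac{1}{2\pi}\int\widehat\phi(w)e^{-iwu}\,dw$, I would write, for $s<t$,
\[
\mathbb{E}\bigl[A_T(s)A_T(t)\bigr]=\frac{2}{F_T(2\pi)^2}\int_{\mathbb{R}^2}\widehat\phi(w_1)\widehat\phi(w_2)\,e^{i(w_1+w_2)a_Tx}\,\Theta_T(w_1,w_2)\,dw_1\,dw_2,
\]
where $\Theta_T$ is the time integral over $0<s_1<s_2<\sigma_T s$ (with the remaining time up to $\sigma_T t$) of $e^{-s_1\psi(w_1+w_2)}e^{-(s_2-s_1)\psi(w_2)}$. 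The decisive feature is that $\Theta_T$ is of order $\sigma_T$ only on the thin strip where $w_1+w_2\approx0$, since $\psi$ vanishes at the origin; away from it $\Theta_T=O(1)$ and the prefactor $F_T^{-1}\to0$ annihilates the contribution. Changing variables $p=w_1+w_2$, $q=w_2$ and rescaling $p=r/a_T$ turns $e^{i(w_1+w_2)a_Tx}$ into $e^{irx}$, replaces $\widehat\phi(w_1)\widehat\phi(w_2)$ by $\widehat\phi(r/a_T-q)\widehat\phi(q)\to|\widehat\phi(q)|^2$, and, via $\sigma_T\psi(r/a_T)\to|r|^\beta$, converts $\Theta_T$ into $\sigma_T\,\psi(q)^{-1}\int_0^s e^{-u|r|^\beta}\,du$ in the limit, the factor $\sigma_T$ cancelling $F_T^{-1}a_T^{-1}$. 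Recognising $\int_\mathbb{R}e^{irx}\int_0^s e^{-u|r|^\beta}\,du\,dr=2\pi\int_0^s p_u(x)\,du=2\pi\,\mathbb{E}[L_{s}(x)]$, where $p_u$ is the transition density of $\xi$, yields $\mathbb{E}[A_T(s)A_T(t)]\to c(\phi)^2\,\mathbb{E}[L_{s\wedge t}(x)]$, which is precisely the covariance of $c(\phi)W_{L_\cdot(x)}$ and pins down the constant. Assumption~(\textbf{D}) enters here and only here in an essential way: the error from replacing $\widehat\phi(r/a_T-q)$ by $\widehat\phi(-q)=\overline{\widehat\phi(q)}$ is controlled by $C|r/a_T|^\kappa$, and $\kappa>(\beta-1)/2$ is the threshold that makes this error integrable against $\psi^{-1}$ and vanishing as $a_T\to\infty$, thereby relaxing Rosen's boundedness/compact-support hypotheses.

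For the full finite-dimensional distributions I would compute the joint moments $\mathbb{E}\bigl[\prod_{l=1}^{N}A_T(\tau_l)\bigr]$ for a list of times $\tau_1,\dots,\tau_N$ and match them to those of $c(\phi)W_{L_\cdot(x)}$, namely $0$ for $N$ odd and, for $N=2n$, the conditional Wick value
\[
\mathbb{E}\Bigl[\prod_{l=1}^{2n}A_T(\tau_l)\Bigr]\longrightarrow c(\phi)^{2n}\sum_{\pi}\mathbb{E}\Bigl[\prod_{(i,j)\in\pi}L_{\tau_i\wedge\tau_j}(x)\Bigr],
\]
the sum running over pairings $\pi$ of $\{1,\dots,2n\}$. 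Expanding the $N$-fold product over the time simplex and inserting Fourier representations produces, for each ordering, a product of factors $e^{-(\mathrm{gap})\psi(\Sigma)}$ in which $\Sigma$ runs through the partial sums of the frequencies $w_1,\dots,w_N$; each such factor is large (of order $\sigma_T$) only when its partial sum is near $0$, and balancing against $F_T^{-N/2}$ forces exactly $N/2$ of these near-zero constraints. This is possible only when $N$ is even and the frequencies organise into pairs with near-vanishing sums, which reproduces the Gaussian pairing combinatorics: the \emph{fast} variables integrate to the $c(\phi)$-constants while the \emph{slow} variables (of order $a_T^{-1}$) generate the local-time factors at the minima $\tau_i\wedge\tau_j$. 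The separation of scales is exactly what decouples the pairing constants from the local time and yields the asserted independence of $W$ and $L$; non-fully-paired configurations are shown to be of lower order by the localisation estimate of the previous paragraph, again using Assumption~(\textbf{D}).

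Since the local time of a stable process satisfies Carleman's condition, the limiting finite-dimensional laws are determined by their moments, so convergence of all joint moments gives $\overset{f.d.d}{\Longrightarrow}$. To upgrade to $\mathcal{C}[0,\infty)$ under Assumption~(\textbf{C}) I would prove tightness through a Kolmogorov bound $\mathbb{E}\bigl[|A_T(t)-A_T(s)|^{2n}\bigr]\le c_n|t-s|^{1+\delta}$ uniform in $T$, where the extra integrability $\int_1^\infty\psi(w)^{-\kappa}\,dw<\infty$ supplied by~(\textbf{C}) is precisely what renders the frequency integrals in these increment moments finite. The main obstacle is the higher-moment analysis: organising the combinatorics of the partial-sum constraints on the time simplex, proving that only the fully paired configurations survive while all others are negligible, and showing that the surviving contributions factor into a purely fast Gaussian constant and a purely slow local-time moment. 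This bookkeeping, rather than the second-moment computation (which adapts Rosen's argument), is where the proof is most delicate, and it is exactly where the relaxed hypothesis~(\textbf{D}) must be used quantitatively to absorb the approximation errors uniformly across all pairings.
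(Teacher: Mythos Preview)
Your proposal is correct and takes essentially the same approach as the paper: both argue via the method of moments through the Fourier representation, use Assumption~(\textbf{D}) to control the error in replacing $\widehat\phi(r/a_T-q)$ by $\widehat\phi(-q)$, identify the surviving contribution as a product of the constant $c(\phi)$ and local-time moments, and obtain tightness under Assumption~(\textbf{C}) from a Kolmogorov bound. The only cosmetic difference is that where you phrase the higher-moment analysis in terms of partial sums of frequencies being forced near zero and a Wick sum over pairings, the paper implements this by an explicit alternating change of variables (rescaling odd-indexed times by $D_T$ and odd-indexed frequencies by $T^{1/\beta}$) and an $a_T+b$ decomposition of the $\widehat\phi$ factors, so that the consecutive pairing is singled out and all terms containing an $a_T$ are shown to vanish via Lemma~\ref{mloc3}.
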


This result seems relatively robust, in the sense that we cannot expect Proposition~\ref{PROP2} to hold if the tails of $\phi$ are heavier than $y\mapsto|y|^{-1-(\beta-1)/2}$. If this happens then, at least for $\phi$ with regularly varying tails, the normalization on the left-hand side of~\eqref{TEQ2} is no longer valid and the class of limit processes is different. See section~\ref{FATtails} for deatils.

\subsection{First order limit theorem}\label{RS1}

Here we formulate the first main result of our paper in which we identify the limit process (as $T\rightarrow \infty$) of the functional~\eqref{FC}, provided the function $\phi$ is integrable and the integral $\int_\mathbb{R}\phi(y)dy$ does not vanish.

\begin{theorem}\label{THM1}
Assume that the Assumptions (\textbf{A}) and (\textbf{B}) hold. Consider the functional given by
\begin{equation}\label{GF1}
G_t^T=\frac{1}{N_T}\sum_j z_j\mathbf{1}_{\{|z_j|>\epsilon\}}\int_0^{D_T}\phi(C_Tx^j+\eta_u^j)du, \quad t\geq0,
\end{equation}
where $T\geq 1$, $\epsilon$ is an arbitrary positive constant and let
\begin{eqnarray}
N_T &=& T^{1-1/\beta + 1/\alpha \beta}f(T^{1/\beta})^{-1},\label{N_T}\\
C_T &=& L(T^{1/\alpha \beta}),\\
D_T &:=& T f(T^{1/\beta})^{-1}.
\end{eqnarray}
Then, for any integrable function $\phi$, the process~\eqref{GF1} converges, up to multiplicative constant given by $\int_\mathbb{R}\phi(y)dy$, in the sense of finite dimensional distributions to the \emph{$\beta$-stable local time fractional $S\alpha S$ motion} given by~\eqref{G1}. Furthermore, if (\textbf{C}) holds, then the convergence can be strengthened to weak convergence in $\mathcal{C}[0,\infty)$.
\end{theorem}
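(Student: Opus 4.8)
The plan is to compute the joint characteristic function of the vector $(G_{t_1}^T,\dots,G_{t_d}^T)$ directly, exploiting the fact that $G^T$ is a linear functional of a Poisson point process. By the marking theorem the triples $\big((x_j,z_j),\eta^j\big)$ form a Poisson point process on $\mathbb R^2\times\mathcal D$ (path space) with intensity $dx\,\nu_{\alpha,\epsilon}(dz)\,\mathbb P_\eta(d\eta)$, so the Campbell formula gives
\begin{equation}
\mathbb E\exp\!\Big(i\sum_{k=1}^d\theta_k G_{t_k}^T\Big)=\exp\!\bigg(\int_{\mathbb R^2}\mathbb E_\eta\Big[e^{\,i\frac{z}{N_T}B_T(x,\eta)}-1\Big]\,dx\,\nu_{\alpha,\epsilon}(dz)\bigg),
\end{equation}
where $B_T(x,\eta)=\sum_k\theta_k\int_0^{D_Tt_k}\phi(C_Tx+\eta_u)\,du$. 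First I would integrate out the weights $z$: since $\nu_{\alpha,\epsilon}$ is symmetric the sine part cancels, and the scaling identity for the truncated $\alpha$-stable Lévy density yields, for the relevant small arguments $a=B_T/N_T$,
\begin{equation}
\int_{|z|\ge\epsilon}\big(\cos(za)-1\big)|z|^{-1-\alpha}L(z)\,dz\sim-c_\alpha\,|a|^{\alpha}L(1/|a|),\qquad c_\alpha=\int_{\mathbb R}(1-\cos w)|w|^{-1-\alpha}\,dw,
\end{equation}
the truncation at $\epsilon$ being harmless because $a\to0$ forces the effective cut-off $\epsilon|a|$ to $0$.

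Second I would perform the spatial change of variables $x\mapsto x'=-C_TT^{-1/\beta}x$, so that $\phi(C_Tx+\eta_u)=\phi(\eta_u-T^{1/\beta}x')$, matching exactly the occupation functional of Proposition~\ref{PROP1}. Writing $N_T/F_T=T^{1/\alpha\beta}$ and $C_T=L(T^{1/\alpha\beta})$, the Jacobian $T^{1/\beta}/C_T$, the factor $|a|^\alpha=|B_T/F_T|^\alpha T^{-1/\beta}$ and the slowly varying term $L(1/|a|)\sim C_T$ (by slow variation, since $1/|a|$ is of order $T^{1/\alpha\beta}$) combine so that all explicit powers of $T$ and all slowly varying constants cancel, leaving
\begin{equation}
\int_{\mathbb R^2}\mathbb E_\eta\big[e^{i\frac{z}{N_T}B_T}-1\big]\,dx\,\nu_{\alpha,\epsilon}(dz)\;\longrightarrow\;-c_\alpha\Big|\!\int_{\mathbb R}\phi\Big|^{\alpha}\int_{\mathbb R}\mathbb E_\eta\Big|\sum_{k}\theta_kL_{t_k}(x')\Big|^{\alpha}\,dx'.
\end{equation}
By Proposition~\ref{PROP1}, $B_T/F_T$ converges in law to $(\sum_k\theta_kL_{t_k}(x'))\int_{\mathbb R}\phi$ for each fixed $x'$, which supplies the integrand; the right-hand side is precisely the log-characteristic function of $\big(\int_{\mathbb R}\phi\big)\big(\int_{\mathbb R\times\Omega'}L_{t_k}(x,\omega')\,M_\alpha(dx,d\omega')\big)_k$, identifying the limit as the $\beta$-stable local time fractional $S\alpha S$ motion of~\eqref{G1}.

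The main obstacle is the passage to the limit in the last display: one must justify interchanging the limit with both $\mathbb E_\eta$ and the spatial integral $\int dx'$. This requires uniform integrability of the $\alpha$-th moments $\mathbb E_\eta|B_T/F_T|^\alpha$ together with a dominating bound integrable in $x'$ over all of $\mathbb R$, controlling the tails of the rescaled occupation time; Potter's bounds handle the factor $L(1/|a|)/C_T$ uniformly, including the regime where $a$ is not small, and the $\epsilon$-truncation error is shown to vanish. Finiteness of the limit relies on the integrability $\int_{\mathbb R}\mathbb E'|L_t(x)|^\alpha\,dx<\infty$ of the $\beta$-stable local time, which is also what makes the representation~\eqref{G1} well defined. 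Finally, to upgrade finite-dimensional convergence to weak convergence in $\mathcal C[0,\infty)$ under Assumption~(\textbf{C}), I would establish tightness via a fractional-moment increment bound of Kolmogorov type, working with exponents $p<\alpha$ since only moments below $\alpha$ exist; here the integrability $\int_1^\infty\psi(w)^{-\kappa}\,dw<\infty$ furnishes the requisite regularity of the occupation functional, paralleling the $\mathcal C[0,\infty)$ part of Proposition~\ref{PROP1}.
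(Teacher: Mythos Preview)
Your overall strategy coincides with the paper's: compute the characteristic function via the Poisson structure, reduce to the single-particle occupation functional of Proposition~\ref{PROP1} by a change of variables, and then justify passage to the limit. Two points are worth noting.

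First, you integrate out $z$ \emph{before} rescaling, invoking the asymptotic $\int_{|z|\ge\epsilon}(\cos(za)-1)|z|^{-1-\alpha}L(z)\,dz\sim-c_\alpha|a|^\alpha L(1/|a|)$ and then arguing $L(1/|a|)\sim C_T$. The paper instead substitutes $z\mapsto T^{1/\alpha\beta}z$ at the outset, so the integrand becomes $(e^{iz\bar P^T(x)}-1)\,|z|^{-1-\alpha}L(T^{1/\alpha\beta}z)/L(T^{1/\alpha\beta})$, with $\bar P^T(x)$ exactly the quantity from Proposition~\ref{PROP1}. This keeps the slowly varying ratio deterministic and pointwise convergent to $1$, and lets one dominate via the elementary bounds $|e^{iw}-1-iw|\le w^2/2$ on $|z|\le1$ and $|e^{iw}-1|\le|w|$ on $|z|>1$, reducing everything to uniform control of $\int_{\mathbb R}\mathbb E|P_t^T(x)|\,dx$ and $\int_{\mathbb R}\mathbb E P_t^T(x)^2\,dx$ together with tail estimates $\int_{|x|>K}(\cdots)\,dx$. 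Your route is not wrong, but the step $L(1/|a|)\sim C_T$ is delicate because $a=B_T/N_T$ is random: when $B_T/F_T$ is small, $1/|a|\gg T^{1/\alpha\beta}$, and Potter's bounds then inject extra powers of $|B_T/F_T|$ that you must absorb. The paper's change of variables sidesteps this entirely.

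Second, for tightness under (\textbf{C}) the paper does \emph{not} use $p<\alpha$ fractional moments. Instead it truncates the weights at level $K_T=KT^{1/\alpha\beta}$, splitting $G^T=G^{T,1}+G^{T,2}$. The truncated piece $G^{T,1}$ has finite \emph{second} moment, and one gets $\mathbb E(G^{T,1}_t-G^{T,1}_s)^2\le c\,K^{2-\alpha}(t-s)^{1+\delta}$ directly from Lemma~\ref{LTIGHT1}, so Kolmogorov applies. The large-weight remainder $G^{T,2}$ is handled by a first-moment supremum bound showing $\limsup_T\mathbb P(\sup_{[0,\tau]}|G^{T,2}|>\delta)\to0$ as $K\to\infty$. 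Your proposed Kolmogorov bound with $p<\alpha$ would need an increment exponent exceeding $1$, which is not immediate for small $p$; the truncation device is what makes the argument go through cleanly.
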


\subsection{Second order limit theorems}\label{RS2}

When $\int_\mathbb{R}\phi(y)dy = 0$ the, the limit process given by Theorem~\ref{THM1} is the zero process. To obtain a non-trivial limit in this case one has to use a normalization different than $N_T$ given by~\eqref{N_T}. This case being more complicated, we only consider the case where the particle motion is given by symmetric stable L\'{e}vy processes.

In the case of relatively \emph{light} tails we have the following theorem which produces another representation of the process first described in~\cite{SAM2}.

\begin{theorem}\label{THM2}
Assume that $\eta$ is a symmetric $\beta$-stable L\'{e}vy process with $\beta\in (1,2)$ and in the Assumption (\textbf{B}) the function $L$ is identically equal to $1$. Let $\phi$ be an integrable function with $\int_\mathbb{R}\phi(y)dy = 0$, satisfying Assumption (\textbf{D}) such that additionally
\begin{equation}\label{tloc1}
\int_\mathbb{R}|\phi(y)||y|^{\frac{\beta-1}{2}}dy<\infty.
\end{equation}
Then, the functional given by~\eqref{FC} with $F_T = T^{\frac{\beta-1}{2\beta}+\frac{1}{\alpha \beta}}$ converges, up to multiplicative constant, in the sense of finite dimensional distributions to the process given by
\begin{equation}\label{GW}
\int_{\mathbb{R}\times \Omega'}W_{L_t(x)}M_\alpha(dx,d\omega'), \quad t \geq 0,
\end{equation}
where $M_\alpha$ is a symmetric $\alpha$-stable random measure on $\mathbb{R}\times \Omega'$ with intensity measure $\lambda_1 \otimes \mathbb{P}'$. $(\Omega',\mathcal{F}',\mathbb{P}')$ is the probability space on which $(L_t(x,\omega'))_{t\geq 0, x\in \mathbb{R}}$ is defined. The random measure $M_\alpha$ is itself defined on another probability space $(\Omega,\mathcal{F},\mathbb{P})$. $W$ is a Brownian motion (defined on $(\Omega',\mathcal{F}',\mathbb{P}')$) independent of the movement and the initial positions of the particles. The process defined by~\eqref{GW} is the same as the one in~\eqref{G3} with $\gamma=2$. 
\end{theorem}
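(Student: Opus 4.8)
The plan is to compute the joint characteristic function of the finite-dimensional distributions of the functional~\eqref{FC} and show it converges to that of the process~\eqref{GW}. The key structural fact is that, conditionally on the Poisson point process $(x_j,z_j)$ and the particle motions $(\eta^j)$, the functional $G^T$ is a sum over the Poisson points. I would exploit the Poisson structure directly: for a finite linear combination $\sum_k \theta_k G^T_{t_k}$, the Laplace/characteristic functional of a Poissonian sum has the exponential form
\begin{equation*}
\mathbb{E}\exp\Big(i\sum_k\theta_k G^T_{t_k}\Big)=\exp\left(\int_{\mathbb{R}\times\mathbb{R}}\mathbb{E}_\eta\Big[e^{i z H^T(x,\eta,\theta)}-1\Big]\,dx\,\nu_{\alpha,\epsilon}(dz)\right),
\end{equation*}
where $H^T(x,\eta,\theta)=\frac{1}{F_T}\sum_k\theta_k\int_0^{Tt_k}\phi(C_Tx+\eta_u)\,du$ is the (normalized) inner occupation integral for a single particle and $\mathbb{E}_\eta$ denotes expectation over the particle motion. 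This reduces everything to understanding the single-particle object $H^T$.

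First I would perform the change of variables $x\mapsto T^{1/\beta}x$ in the spatial integral (recalling $C_T=L(T^{1/\beta\alpha})$ and that $L\equiv1$ here, so $C_T=1$), which brings in a factor $T^{1/\beta}$ and rescales the occupation integral into exactly the form appearing in Proposition~\ref{PROP2}. Because $\int_\mathbb{R}\phi=0$, the relevant single-particle limit is the Gaussian-in-local-time one: by Proposition~\ref{PROP2}, for fixed $x$ the rescaled inner integral converges to $c(\phi)\sum_k\theta_k W_{L_{t_k}(x)}$. The remaining task is to control the $\alpha$-stable integration against $\nu_{\alpha,\epsilon}$. Here I would use the standard computation for symmetric $\alpha$-stable random measures: since $\nu_{\alpha,\epsilon}(dz)=\mathbf{1}_{\{|z|\ge\epsilon\}}|z|^{-1-\alpha}\,dz$, the inner $z$-integral $\int(e^{izh}-1)\,\nu_{\alpha,\epsilon}(dz)$ behaves, after the $F_T$ scaling is absorbed, like $-c_\alpha|h|^\alpha$ up to lower-order terms, and the $\epsilon$-cutoff contribution vanishes in the limit because the mass near $|z|=\epsilon$ contributes a term of smaller order than $F_T^\alpha$. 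This is the mechanism by which $\epsilon$ becomes irrelevant, as advertised in the introduction.

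Combining these, the log-characteristic function should converge to
\begin{equation*}
-c_\alpha\,c(\phi)^\alpha\int_\mathbb{R}\mathbb{E}_{\mathbb{P}'}\Big|\sum_k\theta_k W_{L_{t_k}(x)}\Big|^\alpha\,dx,
\end{equation*}
which is precisely the log-characteristic function of $\sum_k\theta_k\int_{\mathbb{R}\times\Omega'}W_{L_{t_k}(x)}\,M_\alpha(dx,d\omega')$, identifying the limit as~\eqref{GW} up to the multiplicative constant. The choice $F_T=T^{(\beta-1)/2\beta+1/\alpha\beta}$ is dictated by matching the single-particle rate $T^{(\beta-1)/2\beta}$ from Proposition~\ref{PROP2} with the $T^{1/\beta}$ spatial Jacobian raised to the stable-integration power $1/\alpha$, i.e. $(T^{1/\beta})^{1/\alpha}$.

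The main obstacle I anticipate is justifying the passage to the limit inside the Poisson exponential integral, i.e. upgrading the single-particle convergence of Proposition~\ref{PROP2} (which is convergence in law / finite-dimensional distributions) to convergence of the $x$-integral of $\mathbb{E}_\eta|\cdot|^\alpha$. This requires a uniform integrability or dominating-function argument: one needs a bound on $\mathbb{E}_\eta|H^T(x,\eta,\theta)|^\alpha$ that is integrable in $x$ uniformly in $T$, so that dominated convergence applies. Establishing such moment bounds on the rescaled occupation integral—and in particular controlling the spatial decay in $x$ so the integral converges—is where Assumption~(\textbf{D}) and the tail condition~\eqref{tloc1} on $\phi$ must be used, and it is the part most likely to require careful estimation (presumably reusing the $L^2$-type bounds developed in the proof of Proposition~\ref{PROP2}). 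The interchange of the $\mathbb{P}'$-expectation with the limit, and verifying that $\mathbb{E}_{\mathbb{P}'}|\sum_k\theta_k W_{L_{t_k}(x)}|^\alpha$ is finite and integrable over $x$ (using the spatial integrability of the local time), completes the identification.
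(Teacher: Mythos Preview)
Your proposal is correct and follows essentially the same route as the paper: the Poisson exponential formula reduces the problem to the single-particle occupation integral, Proposition~\ref{PROP2} (via its moment convergence) gives the pointwise limit $\mathbb{E}\big|\sum_k a_k R^T_{t_k}(x)\big|^\alpha\to c(\phi)^\alpha\,\mathbb{E}\big|\sum_k a_k W_{L_{t_k}(x)}\big|^\alpha$, and the $\epsilon$-cutoff is removed exactly as you describe. The uniform-integrability obstacle you flag is indeed the heart of the matter; the paper handles it not by a single dominating function but by the combination of two lemmas: a uniform-in-$T$ bound on $T^{(\beta-1)/(2\beta)}\int_\mathbb{R}\mathbb{E}|R^T_t(x)|\,dx$ and $T^{(\beta-1)/\beta}\int_\mathbb{R}\mathbb{E}|R^T_t(x)|^2\,dx$ (so that $\mathbb{E}|\cdot|^\alpha\le\mathbb{E}|\cdot|+\mathbb{E}|\cdot|^2$ is controlled), together with a tail estimate showing $\int_{|x|>K}$ of these quantities can be made small uniformly for large $T$ --- both proved using~\eqref{tloc1} and the local-time second-moment bound $\mathbb{E}(L_t(x+z)-L_t(x))^2\le c\,(\mathbb{E}L_t(x+z)+\mathbb{E}L_t(x))\,|z|^{\beta-1}$.
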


\begin{rem}
The assumptions in Theorem~\ref{THM2} regarding the function $\phi$ can be put in a more concise form. For instance it suffices to assume that $\int_{\mathbb{R}}|\phi(y)||y|^\kappa<\infty$ for some $\kappa>(\beta-1)/2$ and $\int_\mathbb{R}\phi(y)dy=0$.
\end{rem}

Things change significantly if we allow $\phi$ to have heavier tails. In this case we have to assume that it is more regular. More precisely, we assume that $\phi$ is regularly varying at $+\infty$ and $-\infty$. We show that in this case the limit process of the functional~\eqref{FC} is a stable $H$-sssi process, which, to our knowledge, has not appeared before.

\begin{theorem}\label{THM3}
Suppose that the particle system and their movements are as in the formulation of Theorem~\ref{THM2} and let $\phi$ be an $L^1(\mathbb{R})$-function such that $\int_\mathbb{R}\phi(y)dy=0$ and
\begin{equation}
\phi(y)=\mathbf{1}_{\{y>0\}}f_1(y)-\mathbf{1}_{\{y<0\}}f_2(-y),
\end{equation}
where the functions $f_1,f_2:(0,\infty)\rightarrow \mathbb{R}$ are integrable, regularly varying at infinity with exponents $-\gamma_1$ and $-\gamma_2$, respectively and such that both $f_1$ and $f_2$ are eventually positive. Then they can be written as $f_1= |\cdot|^{-\gamma_1}g_1$, $f_2= |\cdot|^{-\gamma_2}g_2$ with $g_1$ and $g_2$ being slowly varying at infinity. Furthermore, assume that
\begin{equation*}
\gamma_1,\gamma_2>1, \quad \min(\gamma_1,\gamma_2)<1+\frac{\beta-1}{2}.
\end{equation*}
We consider two possible cases.
\begin{itemize}
\item[(i)]
Assume first that $\gamma_1=\gamma_2=\gamma$ and $\lim_{T\rightarrow \infty}f_1(T)/f_2(T)=1$. Set the normalizing factor
\begin{equation}\label{fat_norm}
F_T=g_1(T^{1/\beta})T^{1+1/(\alpha \beta)-\gamma/\beta}
\end{equation}
in~\eqref{FC}. Then, the process $G^T$, defined by~\eqref{FC} converges, up to a multiplicative constant, in the sense of finite-dimensional distributions to the process which has the following integral representation:
\begin{equation}\label{EQ2}
V_t=\int_{\mathbb{R}\times \Omega'}Z_t(x,\omega')M_\alpha(dx,d\omega'),
\end{equation}
where $M_\alpha$ is a symmetric $\alpha$-stable random measure on $\mathbb{R}\times \Omega'$  with intensity $\lambda_1\otimes \mathbb{P'}$ and
\begin{equation}
Z_t(x,\omega')=\int_0^\infty |y|^{-\gamma}\left(L_t(x+y,\omega')-L_t(x-y,\omega')\right)dy,\quad x\in \mathbb{R},
\end{equation}
with the local time $(L_t(x))$ defined on $(\Omega',\mathcal{F}',\mathbb{P}')$. The random measure $M_\alpha$ itself is defined on another probability space $(\Omega,\mathcal{F},\mathbb{P})$. 
\item[(ii)]
In the second case assume without loss of generality that $\gamma_1<\gamma_2$. Then, the limit process (in the sense of finite-dimensional distributions) of the functional $G^T$ given by~\eqref{FC} with normalization $F_T$ as in~\eqref{fat_norm} has the following integral representation:
\begin{equation}\label{stab2}
\widetilde{V}_t=\int_{\mathbb{R}\times \Omega'}\widetilde{Z}_t(x,\omega')M_\alpha(dx,d\omega'),
\end{equation}
where $M_\alpha$ is as in (i) and
\begin{equation}\label{Z2}
\widetilde{Z}_t(x,\omega')=\int_0^\infty |y|^{-\gamma_1}\left(L_t(x+y,\omega')-L_t(x,\omega')\right)dy,\quad x\in \mathbb{R}.
\end{equation}
\end{itemize}
\end{theorem}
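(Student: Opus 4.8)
The plan is to compute the joint characteristic function of $(G^T_{t_1},\dots,G^T_{t_d})$ directly, using that $G^T$ is linear in the Poisson weights $z_j$. Fix times $t_1,\dots,t_d$ and reals $\theta_1,\dots,\theta_d$, and set
\[
\Phi_T(x,\omega')=\frac{1}{F_T}\sum_{k=1}^d\theta_k\int_0^{Tt_k}\phi\big(x+\eta_u(\omega')\big)\,du,
\]
so that $\sum_k\theta_k G^T_{t_k}=\sum_j z_j\mathbf{1}_{\{|z_j|>\epsilon\}}\Phi_T(x_j,\omega'_j)$. Attaching the i.i.d. paths $\eta^j$ and weights $z_j$ to the Poisson process $(x_j)$ produces a Poisson process on $\mathbb{R}\times\Omega'\times\mathbb{R}$ with intensity $dx\otimes\mathbb{P}'\otimes\nu_{\alpha,\epsilon}$, so Campbell's formula gives
\[
\mathbb{E}\exp\Big(i\sum_k\theta_k G^T_{t_k}\Big)=\exp\Big(\int_{\mathbb{R}}\int_{\Omega'}\int_{|z|>\epsilon}\big(\cos(z\Phi_T(x,\omega'))-1\big)\,\nu_{\alpha,\epsilon}(dz)\,\mathbb{P}'(d\omega')\,dx\Big),
\]
the imaginary part vanishing by symmetry of $\nu_{\alpha,\epsilon}$. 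Since $F_T\to\infty$ forces $\Phi_T\to0$, the scaling $w=za$ together with $L\equiv1$ shows $\int_{|z|>\epsilon}(\cos(za)-1)|z|^{-1-\alpha}\,dz=-C_\alpha|a|^\alpha(1+o(1))$ as $a\to0$, the $\epsilon$-cutoff contributing only a negligible $O(a^2)$ correction near the origin; this is the mechanism by which the choice of $\epsilon$ disappears. Thus the whole problem reduces to the asymptotics of $I_T:=\int_{\mathbb{R}}\int_{\Omega'}|\Phi_T(x,\omega')|^\alpha\,\mathbb{P}'(d\omega')\,dx$.

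Next I would establish the single-particle scaling limit. Rescaling space by $x=T^{1/\beta}\tilde x$ and time by $u=Tv$, self-similarity of the $\beta$-stable motion gives $\int_0^{Tt}\phi(T^{1/\beta}\tilde x+\eta_u)\,du\overset{d}{=}T\int_0^t\phi\big(T^{1/\beta}(\tilde x+\eta_v)\big)\,dv$, and the occupation density formula rewrites this as $T\,T^{-1/\beta}\int_{\mathbb{R}}\phi(w)L_t(T^{-1/\beta}w-\tilde x)\,dw$. The hypothesis $\int_{\mathbb{R}}\phi=0$ now lets me subtract $L_t(-\tilde x)$ freely under the integral; Potter's bounds applied to the regularly varying tails $f_1,f_2$ identify the leading order as $T^{1-\gamma/\beta}g_1(T^{1/\beta})$. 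In case (i), where the two tails balance, the two centring terms $\pm L_t(-\tilde x)$ cancel and the expression collapses to $Z_t(-\tilde x)=\int_0^\infty y^{-\gamma}(L_t(-\tilde x+y)-L_t(-\tilde x-y))\,dy$; in case (ii) only the heavier positive tail survives at order $T^{-\gamma_1/\beta}$ and, with the centring retained, yields $\widetilde Z_t(-\tilde x)$. Convergence of these $y$-integrals near $0$ is exactly where the standing constraint $\min(\gamma_1,\gamma_2)<1+(\beta-1)/2$ is used: Hölder continuity of the $\beta$-stable local time of any order $h<(\beta-1)/2$ makes $y^{-\gamma}|L_t(\pm y-\tilde x)-L_t(-\tilde x)|$ integrable. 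In case (ii) the same estimate, with $h$ chosen in $(\gamma_1-1,(\beta-1)/2)$, bounds the subdominant negative tail by $O(T^{-(1+h)/\beta})=o(T^{-\gamma_1/\beta})$, so it disappears in the limit.

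With $F_T=g_1(T^{1/\beta})T^{1+1/(\alpha\beta)-\gamma/\beta}$ the powers of $T$ combine so that $\Phi_T(T^{1/\beta}\tilde x,\omega')\approx T^{-1/(\alpha\beta)}\sum_k\theta_k Z_{t_k}(-\tilde x,\omega')$. Performing the change of variables $x=T^{1/\beta}\tilde x$ in $I_T$ contributes a factor $T^{1/\beta}$, while $|T^{-1/(\alpha\beta)}\,\cdot\,|^\alpha$ contributes $T^{-1/\beta}$, so the prefactor is exactly $1$ and
\[
I_T\longrightarrow\int_{\mathbb{R}}\int_{\Omega'}\Big|\sum_k\theta_k Z_{t_k}(x,\omega')\Big|^\alpha\,\mathbb{P}'(d\omega')\,dx
\]
(after the harmless substitution $\tilde x\mapsto-\tilde x$), and the analogue with $\widetilde Z_t$ in case (ii). This is precisely the control-measure integral in the characteristic function of the $S\alpha S$ integral $\int_{\mathbb{R}\times\Omega'}\sum_k\theta_k Z_{t_k}\,dM_\alpha$ with control measure $\lambda_1\otimes\mathbb{P}'$, so the joint characteristic functions converge to those of $V$ (resp. $\widetilde V$), giving convergence of finite-dimensional distributions up to the multiplicative constant absorbed into $C_\alpha$.

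I expect the main obstacle to be the rigorous justification of the limit of $I_T$, that is, upgrading the pointwise (in $\tilde x$ and $\omega'$) convergence of the rescaled occupation integrals to convergence of their $\alpha$-th absolute moments integrated against $d\tilde x\otimes\mathbb{P}'$. This requires uniform-integrability and dominated-convergence control that is genuinely delicate: Potter bounds to dominate the slowly varying factors uniformly over the whole range of $\tilde x$, moment bounds on the local time and on its spatial increments (to handle both the small-$y$ singularity via Hölder continuity and the decay of $Z_t,\widetilde Z_t$ for large $|\tilde x|$), and an $L^\alpha(\mathbb{P}')$-bound on the single-particle functional uniform in $T$. By contrast, the Campbell-formula reduction, the vanishing of the $\epsilon$-cutoff, and in case (ii) the suppression of the lighter tail are comparatively routine once these uniform estimates are in place.
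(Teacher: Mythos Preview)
Your approach is essentially the one the paper takes: Campbell's formula reduces the joint characteristic function to $\exp(-c(\alpha)\int_\mathbb{R}\mathbb{E}|M^T(x)|^\alpha\,dx)$, after which one identifies the pointwise limit of the single-particle functional via the occupation density formula and Potter bounds, and then justifies passing to the limit under the $dx$-integral. The paper's organisation of the uniform control differs slightly from yours: rather than invoking pathwise H\"older continuity of the local time, it works throughout with the second-moment estimate $\mathbb{E}(L_t(x+y)-L_t(x))^2\le C(\mathbb{E}L_t(x+y)+\mathbb{E}L_t(x))(1\wedge|y|^{\beta-1})$ combined with the spatial decay $\mathbb{E}L_t(x)\le C(1\wedge|x|^{-\beta-1})$ (Lemma~\ref{gbound}), which directly gives $L^1$ and $L^2$ bounds on $Z^\gamma_t(\cdot)$ integrated over $x$ (Lemma~\ref{fatbound}) and hence the $L^\alpha$ domination you correctly flag as the crux. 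The tail-truncation step is handled by showing $\int_{|x|>K}\mathbb{E}|M^T(x)|^\alpha\,dx$ can be made small uniformly in $T$, and the small-$y$ contribution by a separate $\limsup$ argument, exactly the uniform-integrability programme you outline in your final paragraph.
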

\begin{rem}
In the part (ii) of Theorem~\ref{THM3}, we see that in the limit the heavier tail (corresponding to $\gamma_1$) totally dominates the lighter one (corresponding to $\gamma_2)$, even though the integral of $\phi$ is zero. 
\end{rem}

\begin{propo}\label{PROP3}
The processes $V$ and $\widetilde{V}$ introduced above are $H$-sssi with $H=1+1/(\alpha\beta)-\gamma/\beta$ and $H=1+1/(\alpha\beta)-(\min(\gamma_1,\gamma_2))/\beta$, respectively. In this setting $H$ can take any value between $3/4$ and $1$. 
\end{propo}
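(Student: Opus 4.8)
The plan is to reduce both the self-similarity and the stationarity of increments to identities for the deterministic control quantities $\int_{\mathbb{R}\times\Omega'}\bigl|\sum_j\theta_j f_{t_j}(x,\omega')\bigr|^\alpha\,dx\,\mathbb{P}'(d\omega')$, since for a symmetric $\alpha$-stable integral against $M_\alpha$ with control measure $\lambda_1\otimes\mathbb{P}'$ the joint characteristic function of $\bigl(\int f_{t_1}dM_\alpha,\dots,\int f_{t_n}dM_\alpha\bigr)$ equals $\exp\bigl(-\int|\sum_j\theta_j f_{t_j}|^\alpha\,d(\lambda_1\otimes\mathbb{P}')\bigr)$ (Samorodnitsky--Taqqu). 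Thus it suffices to exhibit, for each $c>0$ and each $h>0$, transformations of the kernels $x\mapsto Z_t(x,\cdot)$ that rescale, respectively translate, these control integrals in the required way. Two structural facts about the $\beta$-stable local time drive everything: the joint scaling $(L_{ct}(x))_{t,x}\overset{d}{=}(c^{1-1/\beta}L_t(c^{-1/\beta}x))_{t,x}$, which follows from the $1/\beta$-self-similarity of $\xi$ together with the occupation-density identity; and the additive decomposition of local-time increments supplied by the Markov property.

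For self-similarity I would first compute the scaling of the kernel. Inserting the local-time scaling into the kernel of \eqref{EQ2} and substituting $y=c^{1/\beta}w$ gives, jointly in $(t,x)$ and in law under $\mathbb{P}'$,
\[
(Z_{ct}(x))_{t,x}\overset{d}{=}\bigl(c^{1-\gamma/\beta}Z_t(c^{-1/\beta}x)\bigr)_{t,x}.
\]
Feeding this into the control integral and then substituting $x=c^{1/\beta}u$ in the $dx$-integral produces a global factor $c^{(1-\gamma/\beta)\alpha+1/\beta}$, so that $(V_{ct})_t\overset{d}{=}(c^{H}V_t)_t$ with $H\alpha=(1-\gamma/\beta)\alpha+1/\beta$, i.e.\ $H=1+\frac{1}{\alpha\beta}-\frac{\gamma}{\beta}$. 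The identical computation applied to the kernel \eqref{Z2} merely replaces $\gamma$ by $\gamma_1=\min(\gamma_1,\gamma_2)$, yielding the stated exponent for $\widetilde V$.

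For stationary increments, fix $h>0$ and write $\xi_{h+s}=\xi_h+\tilde\xi_s$, where $\tilde\xi$ is the post-$h$ process. By the Markov property $\tilde\xi$ is an independent copy of $\xi$, and its local-time field $\tilde L$ satisfies $L_{t+h}(z)-L_h(z)=\tilde L_t(z-\xi_h)$ for all $t,z$. Substituting this into the kernels yields $Z_{t+h}(x)-Z_h(x)=\tilde Z_t(x-\xi_h)$, where $\tilde Z$ is built from $\tilde L$ exactly as $Z$ is from $L$, and hence
\[
V_{t+h}-V_h=\int_{\mathbb{R}\times\Omega'}\tilde Z_t(x-\xi_h(\omega'),\omega')\,M_\alpha(dx,d\omega').
\]
In the associated control integral the inner $dx$-integral is invariant under the translation $x\mapsto x-\xi_h(\omega')$ for each fixed $\omega'$; after this translation the integrand depends only on $\tilde L$, and since $\tilde L$ is independent of $\xi_h$ with $\tilde L\overset{d}{=}L$, the $\mathbb{P}'$-integral coincides with the one built from $Z_t$. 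Therefore $(V_{t+h}-V_h)_t\overset{f.d.d.}{=}(V_t)_t$, and the same argument applies verbatim to $\widetilde V$. The main obstacle is precisely this step: one must justify that the $\omega'$-dependent \emph{random} translation leaves the control measure $\lambda_1\otimes\mathbb{P}'$ invariant, which rests on combining the shift-invariance of Lebesgue measure in $x$ with the independence of $\tilde L$ from $\xi_h$; some care is also needed to carry out all statements jointly in $t$, so that the scaling (respectively the shift) acts simultaneously on the whole family $Z_{t_1},\dots,Z_{t_n}$.

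Finally, the range of $H$ follows from the explicit formula. Writing $H=1+\frac{1}{\alpha\beta}-\frac{\gamma}{\beta}$ (with $\gamma=\min(\gamma_1,\gamma_2)$ for $\widetilde V$), the exponent is monotone in each of $\alpha,\beta,\gamma$, so over the admissible domain $\alpha,\beta\in(1,2)$ and $1<\gamma<1+\frac{\beta-1}{2}$ its extrema are attained at the corners; substituting these and optimizing pins down the interval of attainable values of $H$. I expect the self-similarity computation and this endpoint optimization to be routine, with the random-translation invariance in the stationary-increments step being the only genuinely delicate part.
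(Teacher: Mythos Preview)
Your approach is correct and matches the paper's own argument (given as Lemma~\ref{Vprop}): self-similarity is obtained from the local-time scaling $(L_{ct}(x))\overset{d}{=}(c^{1-1/\beta}L_t(c^{-1/\beta}x))$ together with the changes of variables $y\mapsto c^{1/\beta}y$ and $x\mapsto c^{1/\beta}x$ in the control integral, and stationary increments from the additive decomposition $L_{t+s}(z)=L_s(z)+L_t^{\xi_s}(z)$ combined with translation invariance of $dx$. One remark on the range: carrying out your endpoint optimization over $\alpha,\beta\in(1,2)$ and $1<\gamma<1+\tfrac{\beta-1}{2}$ actually yields $H\in(\tfrac12,1)$---which is precisely what the paper records in the Remark following Lemma~\ref{Vprop}---rather than the $(\tfrac34,1)$ stated in Proposition~\ref{PROP3}; the infimum $\tfrac12$ is approached as $\alpha\to2$ and $\gamma\to 1+\tfrac{\beta-1}{2}$.
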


\subsection{Organisation of the paper}
The rest of the paper is organised in the following way. In Section~\ref{Psec} we provide some technical results needed to prove our results in full generality. It may be skipped at first reading. In Section~\ref{sec_ros} we provide proof of Propositions~\ref{PROP1} and~\ref{PROP2}. Section~\ref{sec_fol} is devoted to the proof of Theorem~\ref{THM1} and in Section~\ref{sec_sol} we prove Theorems~\ref{THM2} and~\ref{THM3}. Appendix provides some additional technical results which are used throughout the paper.

\section{Technical results related to regular variation}\label{Psec}

This section starts with a few technical results which will be needed to establish Propositions~\ref{PROP1}, ~\ref{PROP2} and Theorem~\ref{THM1} in full generality. Notice that if in the Assumption (\textbf{A}) the process $\eta$ is a symmetric $\beta$-stable L\'{e}vy process, then all the results of subsection~\ref{Psec} become trivial so the reader can skip it if they are interested only in the stable case. 

Throughout this section we let
\begin{eqnarray}
D_T &:=& T f(T^{1/\beta})^{-1},\\
\psi_T(z)&:=& Tf(T^{1/\beta})^{-1}\psi\big(\frac{z}{T^{1/\beta}}\big),
\end{eqnarray}
for $T\geq 1$ and $z\neq 0$. If not stated otherwise, we always assume that $\int_\mathbb{R}|\phi(y)|dy=1$, which implies that $|\widehat{\phi}|$ is bounded by $1$. Note that if $\psi(z)=|z|^\beta$, then $\psi_T(z)=|z|^\beta$ for all $T>0$ and this means that all of the following lemmas become trivial in this case. Before we start we will state a result in the theory of regular variation which will be used multiple times. For proof see Theorem 10.5.6 in~\cite{SPLRD}.

\begin{theorem}\label{RVT}
Let $f$ be a positive function regularly varying at infinity with exponent $\beta\geq -1$. Assume that $f$ is locally integrable, i.e., $\int_0^a f(x)dx<\infty$ for every $0<a<\infty$. Then the function $F(x)=\int_0^xf(t)dt$, $x\geq 0$, is regularly varying at infinity with exponent $\beta+1$ and satisfies
\begin{equation}
\lim_{x\rightarrow \infty}\frac{F(x)}{xf(x)}=\frac{1}{\beta+1},
\end{equation}
with $1/0$ defined as $+\infty$.
\end{theorem}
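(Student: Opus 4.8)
The plan is to recognise this as Karamata's integration theorem and prove it by reducing the ratio $F(x)/(xf(x))$ to an integral governed by the uniform convergence theorem for regularly varying functions. Writing $f(x)=x^\beta L(x)$ with $L$ slowly varying at infinity, I would first quarantine the behaviour near the origin: fix some $x_0>0$ and split $F(x)=\int_0^{x_0}f(t)\,dt+\int_{x_0}^x f(t)\,dt$. Local integrability makes the first term a finite constant, and since $xf(x)=x^{\beta+1}L(x)\to+\infty$ when $\beta>-1$, this piece is negligible after dividing by $xf(x)$. The remaining analysis then only involves values of the argument that are bounded below by $x_0$, where regular variation is available.

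For the main term I would substitute $t=xs$, giving
\begin{equation*}
\frac{1}{xf(x)}\int_{x_0}^x f(t)\,dt=\int_{x_0/x}^1\frac{f(xs)}{f(x)}\,ds .
\end{equation*}
By the uniform convergence theorem, $f(xs)/f(x)\to s^\beta$ as $x\to\infty$ uniformly for $s$ in compact subsets of $(0,1]$, and Potter's bounds provide, for any small $\delta>0$, a dominating estimate of the form $f(xs)/f(x)\le C\,s^{\beta-\delta}$ once $x$ and $xs$ are large. Choosing $\delta$ with $\beta-\delta>-1$ makes $s^{\beta-\delta}$ integrable on $(0,1)$, so dominated convergence yields $\int_{x_0/x}^1 f(xs)/f(x)\,ds\to\int_0^1 s^\beta\,ds=1/(\beta+1)$. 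Thus for $\beta>-1$ we obtain $F(x)/(xf(x))\to 1/(\beta+1)$. Regular variation of $F$ is then immediate: from $F(x)\sim\frac{1}{\beta+1}x^{\beta+1}L(x)$ and the fact that the right-hand side is regularly varying with exponent $\beta+1$, the same holds for $F$, since asymptotic equivalence preserves regular variation.

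The boundary case $\beta=-1$, where the asserted limit is $1/0=+\infty$, must be handled separately because the dominating exponent $\beta-\delta$ can no longer be kept above $-1$ and $xf(x)=L(x)$ need not diverge. Here I would invoke the standard fact that the primitive $\int_{x_0}^x t^{-1}L(t)\,dt$ of a slowly varying function over $t$ is itself slowly varying and dominates $L$, i.e. $L(x)\big/\int_{x_0}^x t^{-1}L(t)\,dt\to 0$; this simultaneously gives the slow variation of $F$ (exponent $\beta+1=0$) and $F(x)/(xf(x))=F(x)/L(x)\to+\infty$, matching the convention $1/0=+\infty$. The main obstacle throughout is precisely the interchange of limit and integral near $s=0$: the uniform convergence and Potter estimates only apply once $xs$ is large, so the low-$s$ contribution has to be controlled by local integrability in the case $\beta>-1$ and by the separate slow-variation argument in the case $\beta=-1$.
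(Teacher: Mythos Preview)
The paper does not actually prove this statement: it is quoted as a background result with the line ``For proof see Theorem 10.5.6 in~\cite{SPLRD}.'' So there is no in-paper argument to compare against.

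Your proposal is the standard proof of Karamata's integration theorem and is correct. The substitution $t=xs$, the use of the uniform convergence theorem together with Potter's bounds to justify dominated convergence on $(0,1]$, and the separate treatment of the boundary case $\beta=-1$ via the classical fact that $L(x)\big/\int_{x_0}^x t^{-1}L(t)\,dt\to 0$ are exactly the ingredients one finds in the textbook proofs (e.g.\ Bingham--Goldie--Teugels or the reference \cite{SPLRD} that the paper cites). The only cosmetic point is that in the $\beta=-1$ case you should also note that the finite constant $\int_0^{x_0}f$ is harmless when divided by $xf(x)=L(x)$, since the same standard result forces $F(x)/L(x)\to\infty$ regardless of whether $\int^\infty t^{-1}L(t)\,dt$ diverges or converges; but this is implicit in the fact you invoke.
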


\begin{lem}\label{clim}
The characteristic exponent from Assumption (\textbf{A}) satisfies
\begin{equation}
\lim_{T\rightarrow \infty}\psi_T(z)=|z|^{\beta},
\end{equation}
for $z \neq 0$.
\end{lem}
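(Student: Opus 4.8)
The plan is to reduce $\psi_T(z)$ to an explicit real integral and then pass to the limit by dominated convergence; the only genuine difficulty is the region of integration near the origin, where the slowly varying factor cannot be controlled by Potter's bounds. First I would write out the L\'evy--Khintchine exponent. Since $\eta$ has characteristic triple $(0,0,\nu)$ with $\nu(du)=c(\beta)^{-1}g(u)\,du$ and $g(u)=f(u)|u|^{-1-\beta}$,
\[
\psi(w)=c(\beta)^{-1}\int_{\mathbb{R}}\bigl(1-e^{iwu}+iwu\mathbf{1}_{\{|u|\le 1\}}\bigr)f(u)|u|^{-1-\beta}\,du .
\]
Putting $w=z/T^{1/\beta}$ and substituting $u=T^{1/\beta}v$ (so $wu=zv$ and $\mathbf{1}_{\{|u|\le1\}}=\mathbf{1}_{\{|v|\le T^{-1/\beta}\}}$), the powers of $T$ combine to exactly cancel the prefactor $Tf(T^{1/\beta})^{-1}$ in the definition of $\psi_T$, leaving
\[
\psi_T(z)=c(\beta)^{-1}\int_{\mathbb{R}}\bigl(1-e^{izv}+izv\mathbf{1}_{\{|v|\le T^{-1/\beta}\}}\bigr)\frac{f(T^{1/\beta}v)}{f(T^{1/\beta})}|v|^{-1-\beta}\,dv .
\]
Because $f$ is symmetric, the weight $v\mapsto \frac{f(T^{1/\beta}v)}{f(T^{1/\beta})}|v|^{-1-\beta}$ is even, so the odd imaginary part integrates to zero and
\[
\psi_T(z)=c(\beta)^{-1}\int_{\mathbb{R}}\bigl(1-\cos(zv)\bigr)\frac{f(T^{1/\beta}v)}{f(T^{1/\beta})}|v|^{-1-\beta}\,dv .
\]
The same symmetrisation applied to the definition of $c(\beta)$ gives $c(\beta)=\int_{\mathbb{R}}(1-\cos u)|u|^{-1-\beta}\,du$, and a scaling change of variables yields $\int_{\mathbb{R}}(1-\cos(zv))|v|^{-1-\beta}\,dv=c(\beta)|z|^{\beta}$. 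Hence it suffices to prove that the integral in the last display converges to $\int_{\mathbb{R}}(1-\cos(zv))|v|^{-1-\beta}\,dv$.

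By the uniform convergence theorem for slowly varying functions, $f(T^{1/\beta}v)/f(T^{1/\beta})\to 1$ for each fixed $v\neq 0$, so the integrand converges pointwise to $(1-\cos(zv))|v|^{-1-\beta}$. To justify passing to the limit I would split the domain. On $\{|v|>1\}$ the Potter bound (Corollary 10.5.8 in \cite{SPLRD}) gives $f(T^{1/\beta}v)/f(T^{1/\beta})\le(1+\delta)|v|^{\delta}$ for $T$ large, so with $1-\cos(zv)\le 2$ the integrand is dominated by $2(1+\delta)|v|^{\delta-1-\beta}$, integrable as soon as $\delta<\beta$. On $\{X_0T^{-1/\beta}\le|v|\le1\}$ the companion lower-range Potter bound gives $f(T^{1/\beta}v)/f(T^{1/\beta})\le(1+\delta)|v|^{-\delta}$, so using $1-\cos(zv)\le\tfrac12 z^2v^2$ the integrand is dominated by $\tfrac12 z^2(1+\delta)|v|^{1-\beta-\delta}$, integrable near $0$ provided $\delta<2-\beta$. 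Choosing $\delta\in(0,2-\beta)$ (note $2-\beta<\beta$ since $\beta>1$) produces a single integrable dominating function valid on $\{|v|>X_0T^{-1/\beta}\}$ for all large $T$, and dominated convergence handles this part, the shrinking lower cutoff $X_0T^{-1/\beta}\to0$ causing no trouble.

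The main obstacle is the remaining sliver $\{|v|\le X_0T^{-1/\beta}\}$, where $T^{1/\beta}v$ stays bounded and Potter's bounds are unavailable. Here I would estimate directly: using $1-\cos(zv)\le\tfrac12 z^2v^2$ and changing variables back to $u=T^{1/\beta}v$,
\[
\int_{|v|\le X_0T^{-1/\beta}}\bigl(1-\cos(zv)\bigr)\frac{f(T^{1/\beta}v)}{f(T^{1/\beta})}|v|^{-1-\beta}\,dv\le \frac{z^2}{2}\,\frac{T^{(\beta-2)/\beta}}{f(T^{1/\beta})}\int_{|u|\le X_0}f(u)|u|^{1-\beta}\,du .
\]
The integral on the right is finite because $\nu$ is a genuine L\'evy measure: on $\{|u|\le1\}$ one has $|u|^{1-\beta}f(u)=u^2g(u)$ with $\int_{|u|\le1}u^2g(u)\,du=c(\beta)\int_{|u|\le1}u^2\,\nu(du)\le c(\beta)\int_{\mathbb{R}}(1\wedge u^2)\,\nu(du)<\infty$, while on $\{1\le|u|\le X_0\}$ one has $|u|^{1-\beta}f(u)=u^2g(u)\le X_0^2g(u)$ and $\int_{|u|\ge1}g\,du<\infty$. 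Since $f(T^{1/\beta})$ is slowly varying in $T$ whereas $T^{(\beta-2)/\beta}$ has strictly negative index (as $\beta<2$), the prefactor $T^{(\beta-2)/\beta}/f(T^{1/\beta})$ is regularly varying of negative index and tends to $0$, so the sliver contributes nothing in the limit. Combining the two parts gives $\psi_T(z)\to c(\beta)^{-1}\int_{\mathbb{R}}(1-\cos(zv))|v|^{-1-\beta}\,dv=|z|^{\beta}$, as required.
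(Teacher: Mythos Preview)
Your proof is correct and follows essentially the same strategy as the paper's: express $\psi_T(z)$ as an integral against the ratio $f(T^{1/\beta}v)/f(T^{1/\beta})$, use Potter bounds to get dominated convergence on the region bounded away from the origin, and treat the small-$|v|$ region separately because Potter's bounds do not reach there.

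The one tactical difference is in how that near-origin piece is handled. The paper splits at a \emph{fixed} $r\in(0,1)$ and bounds $\int_{|u|\le r}\frac{f(T^{1/\beta}u)}{f(T^{1/\beta})}|u|^{1-\beta}\,du$ by $c\,r^{2-\beta}$ via Karamata's theorem (Theorem~10.5.6 in \cite{SPLRD}), then lets $r\to 0$ in an $\varepsilon$-argument. You instead split at the $T$-dependent point $X_0T^{-1/\beta}$, change variables back to the original scale, and invoke the L\'evy measure condition $\int_{|u|\le 1}u^2\,\nu(du)<\infty$ to see the sliver's contribution is of order $T^{(\beta-2)/\beta}/f(T^{1/\beta})\to 0$. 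Your route is slightly more elementary in that it avoids Karamata and uses only the Potter bounds and the defining integrability of a L\'evy measure; the paper's route is a bit cleaner in that the split does not move with $T$. Your additional symmetrisation to the $(1-\cos)$ form is purely cosmetic.
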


\begin{proof}
After a change of variables we can write 
\begin{multline}
\psi_T(z)=c(\beta)^{-1}\int_{\mathbb{R}}\left(1-e^{iuz}+iuz\mathbf{1}_{\{|u|\leq T^{1/\beta}\}}\right)\frac{f(T^{1/\beta}u)}{f(T^{1/\beta})}|u|^{-1-\beta}du,
\end{multline}
with $c(\beta)$ as in Lemma~\ref{trick}. By the same lemma it only remains to justify going with the limit under the integral sign. Fix some $r\in(0,1)$ and write (using the symmetry of $f$) $\psi_T(z) = \psi_T^1(z)+\psi_T^2(z)$ with
\begin{eqnarray}
\psi_T^1(z)&=& c(\beta)^{-1}\int_{|u|\leq r}\big(1-e^{iuz}+iuz\big)\frac{f(T^{1/\beta}u)}{f(T^{1/\beta})}|u|^{-1-\beta}du,\\
\psi_T^2(z)&=& c(\beta)^{-1}\int_{|u|>r}\big(1-e^{iuz}+\mathbf{1}_{\{|u|\leq 1\}}iuz\big)\frac{f(T^{1/\beta}u)}{f(T^{1/\beta})}|u|^{-1-\beta}du\label{kloc1}.
\end{eqnarray}
By Theorem 10.5.6 in~\cite{SPLRD} and inequality $|1-e^{iz}+iz|\leq 1/2|z|^2, z\in \mathbb{R}$, $\psi_T^1(z)$ can be bounded, for all $T$ large enough, by $c_1(\beta)r^{2-\beta}$, where $c_1(\beta)$ is a finite constant depending only on $\alpha$. On the other hand, by Theorem 10.5.5 and Corollary 10.5.8 in~\cite{SPLRD}, for any $\delta>0$ there exists $T_0\geq 1$ such that for all $T\geq T_0$ the integrand in $\psi_T^2(z)$ can be bounded by $c_2|u|^{-\beta}(1+\delta)|u|^{\delta}$. Thus, by dominated convergence, ~\eqref{kloc1} converges to 
\begin{equation}
c(\beta)^{-1}\int_{|u|>r}\big(1-e^{iuz}+\mathbf{1}_{\{|u|\leq 1\}}iuz\big)|u|^{-1-\beta}du.
\end{equation}
This and Lemma~\ref{trick} shows that for any $\delta>0$ and $z \neq 0$ $|\psi_T(z)-|z|^\beta|<\delta$ for all $T$ large enough.
\end{proof}

\begin{col}\label{RVpsi}
It is easy to see that Lemma~\ref{clim} implies that
\begin{equation}
\lim_{w\rightarrow 0}\frac{\psi(w)}{|w|^\beta f(1/w)}=c_1,
\end{equation}
for some finite constant $c_1$. This in turn means that $\psi \in RV_0(\beta)$ and, since we can always write $\psi(z) = |z|^\beta L_0(z)$ with $L_0$ slowly varying at $0$, we have
\begin{equation}
\lim_{T\rightarrow \infty} \frac{L_0(T^{-1})}{f(T)} = 1.
\end{equation}
Moreover, for any $u\geq 0$ and $\theta\in \mathbb{R}$ we have
\begin{equation}\label{LSCAL}
\lim_{T\rightarrow \infty} \mathbb{E}\bigg(exp\big(i\theta T^{-1/\beta}\eta_{D_Tu}\big)\bigg) = e^{-u|\theta|^{\beta}}.
\end{equation}

\end{col}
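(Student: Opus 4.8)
The whole corollary is a repackaging of Lemma~\ref{clim}, which already gives $\lim_{T\to\infty}\psi_T(z)=|z|^\beta$ where $\psi_T(z)=D_T\,\psi(z/T^{1/\beta})$ and $D_T=Tf(T^{1/\beta})^{-1}$; the plan is to read off each of the three assertions from this single fact together with elementary substitutions, so that no new analytic input is required.

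First I would establish the asymptotic equivalence by specializing Lemma~\ref{clim} to $z=1$ and substituting $w=T^{-1/\beta}$, so that $T=w^{-\beta}$ and $f(T^{1/\beta})=f(1/w)$. This turns $\psi_T(1)$ into $\psi(w)\big(w^\beta f(1/w)\big)^{-1}$, and the lemma yields $\lim_{w\to 0^+}\psi(w)\big(w^\beta f(1/w)\big)^{-1}=1$. Since $\eta$ is symmetric its characteristic exponent $\psi$ is even, and $f$ is symmetric as well, so the one-sided limit upgrades to a genuine two-sided limit as $w\to 0$; thus the first displayed limit holds, in fact with $c_1=1$.

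Next, for the regular-variation claim, I would observe that $w\mapsto f(1/w)$ is slowly varying at $0$ (because $f$ is slowly varying at infinity) while $|w|^\beta\in RV_0(\beta)$, so the equivalence $\psi(w)\sim|w|^\beta f(1/w)$ from the previous step shows directly that $\psi\in RV_0(\beta)$. Writing $\psi(z)=|z|^\beta L_0(z)$ with $L_0$ slowly varying at $0$, the same equivalence reads $L_0(w)\sim f(1/w)$ as $w\to 0$; putting $w=T^{-1}$ gives $L_0(T^{-1})\sim f(T)$, which is exactly $\lim_{T\to\infty}L_0(T^{-1})/f(T)=1$.

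Finally, for~\eqref{LSCAL} I would use the L\'evy--Khintchine identity $\mathbb{E}\big(e^{i\theta\eta_t}\big)=e^{-t\psi(\theta)}$ attached to the triple $(0,0,\nu)$, which gives
\[
\mathbb{E}\Big(\exp\big(i\theta T^{-1/\beta}\eta_{D_Tu}\big)\Big)=\exp\big(-u\,D_T\,\psi(\theta T^{-1/\beta})\big)=\exp\big(-u\,\psi_T(\theta)\big),
\]
and Lemma~\ref{clim} gives $\psi_T(\theta)\to|\theta|^\beta$ for $\theta\neq 0$ (the case $\theta=0$ being trivial), so the right-hand side converges to $e^{-u|\theta|^\beta}$. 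The only thing to watch throughout is the bookkeeping of the substitutions and the appeal to evenness and symmetry to pass from one-sided to two-sided limits; there is no genuine obstacle beyond Lemma~\ref{clim} itself.
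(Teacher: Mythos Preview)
Your argument is correct and is precisely the unpacking the paper has in mind: the corollary is stated without a separate proof, merely asserting that it follows from Lemma~\ref{clim}, and your three substitutions (specializing $z=1$ with $w=T^{-1/\beta}$, reading off $L_0(w)\sim f(1/w)$, and plugging the L\'evy--Khintchine identity into $\psi_T(\theta)$) are exactly how one makes that implication explicit. You also correctly observe that the constant $c_1$ is in fact $1$, which is what the paper needs for the second displayed limit to equal $1$.
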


\begin{lem}\label{LE2}
Let $\psi$ be a L\'{e}vy exponent satisfying
\begin{equation}
\int_1^\infty \psi(z)^{-1}dz <\infty,
\end{equation}
and
\begin{equation}
\psi \in RV_{0}(\beta), \quad \beta \in (1, 2).
\end{equation}
Then for any $K>0$ and $\epsilon>0$ there exists $T_0\geq 1$ and $C_0>0$, such that for all $T\geq T_0$
\begin{equation}
\int_K^\infty \psi_T(z)^{-1}dz \leq C_0 K^{1-\beta} + \epsilon.
\end{equation}
\end{lem}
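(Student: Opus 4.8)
The plan is to reduce everything to a single change of variables followed by Karamata-type estimates. First I would substitute $w = z T^{-1/\beta}$ in the definition $\psi_T(z) = T f(T^{1/\beta})^{-1}\psi(z T^{-1/\beta})$, which gives
\begin{equation*}
\int_K^\infty \psi_T(z)^{-1}\,dz = T^{1/\beta - 1} f(T^{1/\beta}) \int_{K T^{-1/\beta}}^\infty \psi(w)^{-1}\,dw.
\end{equation*}
I then split the inner integral at $w = 1$ into a tail piece $\int_1^\infty \psi(w)^{-1}\,dw$, which is finite by hypothesis, and a near-origin piece $\int_{K T^{-1/\beta}}^1 \psi(w)^{-1}\,dw$. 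The tail piece, once multiplied by the prefactor $T^{1/\beta - 1} f(T^{1/\beta})$, tends to $0$ as $T \to \infty$, since the power $T^{1/\beta - 1}$ has strictly negative exponent (because $\beta > 1$) and dominates the slowly varying factor $f(T^{1/\beta})$. Hence this contribution is below $\epsilon$ once $T \geq T_1$ for some $T_1$, and it supplies the additive $\epsilon$ in the claimed bound.

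The core of the argument is the near-origin piece. Since $\psi \in RV_0(\beta)$, the map $w \mapsto \psi(w)^{-1}$ is regularly varying at $0$ with exponent $-\beta < -1$, so to invoke Theorem~\ref{RVT} (which concerns regular variation at infinity with exponent $\geq -1$) I would substitute $w = 1/x$. This turns the integrand into $x \mapsto \psi(1/x)^{-1} x^{-2}$, which is regularly varying at infinity with exponent $\beta - 2 \in (-1,0)$, exactly the range covered by Theorem~\ref{RVT}. Applying that theorem with upper limit $X = T^{1/\beta}/K$ yields
\begin{equation*}
\int_{K T^{-1/\beta}}^1 \psi(w)^{-1}\,dw \sim \frac{K T^{-1/\beta}\,\psi(K T^{-1/\beta})^{-1}}{\beta - 1}, \qquad T \to \infty.
\end{equation*}

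It then remains to insert the asymptotics $\psi(w) \sim |w|^\beta L_0(w)$ from Lemma~\ref{clim} and Corollary~\ref{RVpsi}, writing $\psi(K T^{-1/\beta})^{-1} \sim K^{-\beta} T\, L_0(K T^{-1/\beta})^{-1}$. Multiplying by the prefactor $T^{1/\beta-1}f(T^{1/\beta})$, all powers of $T$ cancel and the near-origin contribution becomes
\begin{equation*}
\frac{K^{1-\beta}}{\beta - 1}\cdot \frac{f(T^{1/\beta})}{L_0(K T^{-1/\beta})}.
\end{equation*}
The crucial step, and the main obstacle, is to show the last ratio tends to $1$: by Corollary~\ref{RVpsi} one has $L_0(T^{-1/\beta})/f(T^{1/\beta}) \to 1$, and since $L_0$ is slowly varying at $0$ the fixed dilation by $K$ is asymptotically negligible, i.e.\ $L_0(K T^{-1/\beta})/L_0(T^{-1/\beta}) \to 1$. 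Consequently the near-origin term converges to $K^{1-\beta}/(\beta-1)$, so choosing $C_0$ slightly larger than $1/(\beta-1)$ and $T_0 \geq T_1$ large enough that both the asymptotic error and the vanishing tail term are controlled yields the stated inequality for all $T \geq T_0$. The delicate points to verify are the Karamata asymptotics along the sequence $a_T = K T^{-1/\beta} \to 0$ and the correct bookkeeping of the slowly varying factors $f$ and $L_0$ so that the $T$-dependence cancels exactly.
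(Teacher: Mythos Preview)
Your proposal is correct and follows essentially the same route as the paper: the same change of variables yielding the prefactor $E_T=T^{1/\beta-1}f(T^{1/\beta})$, the same split at $w=1$ with the tail piece absorbed into $\epsilon$, and the same Karamata asymptotic for $\int_{KT^{-1/\beta}}^1 \psi(w)^{-1}dw$ obtained via the substitution $w=1/x$ and Theorem~\ref{RVT}. The only organisational difference is that the paper isolates this last step as a standalone Lemma~\ref{LE3} and then cites it, whereas you carry out the computation inline; the handling of the slowly varying ratio $f(T^{1/\beta})/L_0(KT^{-1/\beta})\to 1$ via Corollary~\ref{RVpsi} is identical.
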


To prove the above lemma we will need the following consequence of Theorem 10.5.6 in~\cite{SPLRD}.

\begin{lem}\label{LE3}
Let $h: (0,\infty) \rightarrow (0,\infty)$ be in $RV_0(\beta)$, with $\beta \in (1,2)$. Then the function
\begin{equation}
w \mapsto \int_w^1 h(z)^{-1}dz, \quad w \in (0,1)
\end{equation}
is in $RV_0(1-\beta)$ and 
\begin{equation}
\lim_{w\rightarrow 0} \frac{\int_w^1 h(z)^{-1}dz}{wh(w)^{-1}} = \frac{1}{\beta - 1}.
\end{equation}
\end{lem}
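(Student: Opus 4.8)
The plan is to reduce the statement to Karamata's theorem (Theorem~\ref{RVT}) by inverting the variable of integration, which converts regular variation at the origin into regular variation at infinity.

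First I would substitute $z=1/u$. Setting $g(u):=u^{-2}h(1/u)^{-1}$, the change of variables produces the identity
\begin{equation*}
\int_w^1 h(z)^{-1}\,dz=\int_1^{1/w}g(u)\,du,\qquad w\in(0,1).
\end{equation*}
Because $h\in RV_0(\beta)$, a one-line check with the definition shows that $u\mapsto h(1/u)$ is regularly varying at infinity with exponent $-\beta$; consequently its reciprocal has exponent $\beta$, and $g$, being the product of this reciprocal with $u^{-2}$, is regularly varying at infinity with exponent $\beta-2$. The point that makes the whole argument work is that $\beta\in(1,2)$ forces $\beta-2\in(-1,0)$, so the index of $g$ strictly exceeds $-1$ and Theorem~\ref{RVT} is applicable. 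I would also record the elementary identity obtained by evaluating $x g(x)$ at $x=1/w$, namely $x g(x)\big|_{x=1/w}=wh(w)^{-1}$, since this is exactly the denominator appearing in the statement.

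The main technical obstacle is that Theorem~\ref{RVT} integrates from $0$, whereas the hypothesis only controls $h$ near the origin (equivalently $g$ near infinity), so $g$ need not be integrable near $u=0$ and $\int_0^x g$ may be meaningless. I would get around this by replacing $g$ on $(0,1)$ by the constant $g(1)$: the modified function $\widehat g$ agrees with $g$ on $[1,\infty)$, hence is still regularly varying at infinity with exponent $\beta-2\ge-1$, and it is now locally integrable on every interval $(0,a)$, using that a regularly varying function is locally bounded away from $0$ and $\infty$ on compact subsets of $(0,1]$ via its Karamata representation. Applying Theorem~\ref{RVT} to $\widehat g$ yields that $\widehat G(x):=\int_0^x\widehat g(u)\,du$ is regularly varying at infinity with exponent $\beta-1$ and satisfies $\widehat G(x)/\big(x g(x)\big)\to 1/(\beta-1)$. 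Since $\widehat G(x)=g(1)+\int_1^x g(u)\,du$ and $x g(x)$ diverges (its index $\beta-1$ is positive), the added constant is asymptotically negligible, whence $\int_1^x g(u)\,du\big/\big(x g(x)\big)\to 1/(\beta-1)$. Substituting $x=1/w$ and invoking the identity above gives the claimed limit $\lim_{w\to 0}\big(\int_w^1 h(z)^{-1}\,dz\big)\big/\big(wh(w)^{-1}\big)=1/(\beta-1)$.

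Finally, the regular-variation claim follows for free: the limit just established shows that $\int_w^1 h(z)^{-1}\,dz$ is asymptotically equivalent, as $w\to 0$, to $\tfrac{1}{\beta-1}\,wh(w)^{-1}$, and $w\mapsto wh(w)^{-1}$ lies in $RV_0(1-\beta)$ because $w\in RV_0(1)$ and $h^{-1}\in RV_0(-\beta)$. A function asymptotic to a member of $RV_0(1-\beta)$ is itself in $RV_0(1-\beta)$, which completes the proof.
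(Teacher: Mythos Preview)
Your proof is correct and follows essentially the same route as the paper: substitute $z=1/u$ to turn the problem into a Karamata-type statement at infinity for $g(u)=u^{-2}h(1/u)^{-1}\in RV_\infty(\beta-2)$, then read off both the limit and the regular variation. The paper applies Theorem~10.5.6 of \cite{SPLRD} directly to $\int_1^x g$ without discussing the lower endpoint, whereas you take the extra care of extending $g$ by a constant on $(0,1)$ so that the hypothesis $\int_0^a<\infty$ of Theorem~\ref{RVT} is literally satisfied; this is a harmless refinement, since the discrepancy is a fixed constant and $xg(x)\to\infty$.
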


\begin{proof}[Proof of Lemma~\ref{LE3}]
Changing variables we have
\begin{equation}
\int_w^1 h(z)^{-1}dz = \int_1^{\frac{1}{w}}h(1/z)^{-1}z^{-2}dz
\end{equation}
and the function $z \mapsto h(1/z)^{-1}z^{-2}$ is in $RV_\infty(\beta-2)$, so by Theorem 10.5.6 in~\cite{SPLRD} the function $x \mapsto \int_1^x h(1/z)^{-1}z^{-2}$ is in $RV_\infty(\beta - 1)$ and
\begin{equation}
\lim_{x \rightarrow \infty} \frac{\int_1^x h(1/z)^{-1}z^{-2}dz}{xh(1/x)^{-1}x^{-2}} = \frac{1}{\beta - 1},
\end{equation}
which finishes the proof of the lemma.
\end{proof}

\begin{proof}[Proof of Lemma~\ref{LE2}]
Notice that we can write
\begin{equation}
\int_K^\infty \psi_T(z)^{-1}dz = A_T(K) + B_T(K),
\end{equation}
where
\begin{eqnarray}
A_T(K) &=& E_T\int_{KT^{-1/\beta}}^1 \psi(z)^{-1}dz,\\
B_T(K) &=& E_T\int_1^\infty \psi(z)^{-1}dz,
\end{eqnarray}
with $E_T = T^{1/\beta -1} f(T^{1/\beta})$.  For $T$ sufficiently large $B_T(K)$ can be made arbitrarily small, irrespective of the value of $K$. By Lemma~\ref{LE3} 
\begin{equation}
\lim_{T\rightarrow \infty} \frac{\int_{KT^{-1/\beta}}^1 \psi(z)^{-1}dz}{KT^{-1\beta}\psi(KT^{-1/\beta})^{-1}} = \frac{1}{\beta-1},
\end{equation}
which means that for $T$ sufficiently large
\begin{equation}\label{local1}
A_T(K) \leq c(\beta, K) K^{1-\beta} \frac{f(T^{1/\beta})}{L_0(KT^{-1/\beta})},
\end{equation}
where the fraction on the right-hand side of~\eqref{local1} converges to $1$ as $T\rightarrow \infty$ by Corollary~\ref{RVpsi} and $c(\beta, K)$ is a finite positive constant independent of $T$.
\end{proof}

\begin{lem}\label{LE5'}
For any $t>0, T\geq 1$
\begin{equation}
\int_0^t \int_{\mathbb{R}}e^{-u\psi_T(z)}dz du \leq C_2(\psi,t),
\end{equation}
and the constant $C_2(\psi,t)$ does not depend on $T\geq 1$.
\end{lem}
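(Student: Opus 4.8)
The plan is to reduce the double integral to a single integral in $z$ by Tonelli's theorem and then split the $z$-range at a fixed threshold. Since the integrand is nonnegative, Tonelli gives
\begin{equation*}
\int_0^t \int_{\mathbb{R}}e^{-u\psi_T(z)}\,dz\,du = \int_{\mathbb{R}}\frac{1-e^{-t\psi_T(z)}}{\psi_T(z)}\,dz,
\end{equation*}
where the integrand is read as $t$ at $z=0$. Using the elementary inequality $\frac{1-e^{-a}}{a}\leq \min(1,1/a)$ for $a>0$ with $a=t\psi_T(z)$, the integrand is bounded by $\min\big(t,\psi_T(z)^{-1}\big)$. Recall also that $\psi_T$ is even and strictly positive off the origin, since $\psi$ is the exponent of a symmetric L\'evy process with nondegenerate L\'evy measure.

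Next I would fix $K>0$ (say $K=1$). On $\{|z|\leq K\}$ I bound the integrand by $t$, contributing at most $2Kt$; on $\{|z|>K\}$ I bound it by $\psi_T(z)^{-1}$, so that by symmetry the remaining contribution is at most $2\int_K^\infty \psi_T(z)^{-1}\,dz$. For $T$ large this tail is controlled by Lemma~\ref{LE2}: choosing $\epsilon=1$, there are $T_0\geq 1$ and $C_0>0$ with $\int_K^\infty \psi_T(z)^{-1}\,dz\leq C_0K^{1-\beta}+1$ for all $T\geq T_0$. This already yields a bound independent of $T$ on the range $T\geq T_0$.

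The step I expect to require the most care is the complementary range $1\leq T\leq T_0$, which Lemma~\ref{LE2} does not cover. Here I would argue directly: after the change of variables $w=z/T^{1/\beta}$ one has $\int_K^\infty \psi_T(z)^{-1}\,dz = E_T\int_{KT^{-1/\beta}}^\infty \psi(w)^{-1}\,dw$ with $E_T=T^{1/\beta-1}f(T^{1/\beta})$, exactly as in the proof of Lemma~\ref{LE2}. For $1\leq T\leq T_0$ the lower limit satisfies $KT^{-1/\beta}\geq KT_0^{-1/\beta}=:K'>0$, so the inner integral is at most $\int_{K'}^\infty\psi(w)^{-1}\,dw$, which is finite: its tail is finite by Assumption~(\textbf{A}) and on $[K',1]$ the integrand is bounded because $\psi$ is continuous and strictly positive there. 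Moreover $E_T\leq f(T^{1/\beta})$ for $T\geq 1$ since $1/\beta-1<0$, and $f$, being slowly varying, is bounded on the compact set $\{T^{1/\beta}:1\leq T\leq T_0\}$. Hence $\int_K^\infty\psi_T(z)^{-1}\,dz$ is bounded by a finite constant uniformly over $1\leq T\leq T_0$.

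Combining the two ranges gives a single constant $C_3$ with $\int_K^\infty \psi_T(z)^{-1}\,dz\leq C_3$ for all $T\geq 1$, and therefore
\begin{equation*}
\int_0^t\int_{\mathbb{R}}e^{-u\psi_T(z)}\,dz\,du\leq 2Kt+2C_3=:C_2(\psi,t),
\end{equation*}
which depends on $\psi$ and $t$ but not on $T$. The only genuinely delicate point is the local boundedness of $f$ (equivalently of $E_T$) on compacts away from the origin, which is where I would be most careful to invoke the standard local-boundedness property of slowly varying functions.
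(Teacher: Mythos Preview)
Your argument is correct and follows exactly the route the paper intends: the paper's own proof is the single sentence ``This is an easy consequence of Lemma~\ref{LE2},'' and you have simply unpacked that sentence---Tonelli, the bound $\min(t,\psi_T(z)^{-1})$, the split at $|z|=K$, and the appeal to Lemma~\ref{LE2}. Your explicit treatment of the range $1\leq T\leq T_0$ via the change of variables and local boundedness of $E_T$ is more careful than the paper, which leaves that gap implicit; the only soft spot is the phrase ``$f$, being slowly varying, is bounded on the compact set,'' since slow variation at infinity does not by itself guarantee local boundedness on every compact---but this is a harmless implicit regularity assumption in the paper's setting, and the argument stands.
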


\begin{proof}
This is an easy consequence of Lemma~\ref{LE2}.
\end{proof}

If we additionally assume (\textbf{C}) then we can rephrase Lemma~\ref{LE5'} to obtain the following.

\begin{lem}\label{LTIGHT1}
Suppose that assumptions (\textbf{A}) and (\textbf{C}) are satisfied. Then there exists a constant $c_0$, independent of $t$ and $T$ such that for all $T$ large enough
\begin{equation}\label{nloc1}
\int_0^t \int_\mathbb{R} e^{-u\psi_T(w)}dwdu \leq c_0t^{\delta},
\end{equation}
for any $0<\delta<1-1/\beta$ and all $t\in (0,1)$.
\end{lem}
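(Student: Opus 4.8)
The plan is to perform the inner ($u$) integral explicitly and then estimate the resulting one-dimensional integral, splitting the $w$-axis at the natural scale $|w|=t^{-1/\beta}$. First I would use Fubini to write
\[
\int_0^t\int_\mathbb{R} e^{-u\psi_T(w)}\,dw\,du=\int_\mathbb{R}\frac{1-e^{-t\psi_T(w)}}{\psi_T(w)}\,dw,
\]
which is legitimate since $\psi_T>0$ off the origin. The two elementary inequalities I rely on are $\frac{1-e^{-s}}{s}\le 1$ for all $s\ge 0$, and, for any $a\in[0,1]$, $1-e^{-s}\le\min(1,s)\le s^{a}$. The first controls the region where $\psi_T$ is small, the second the region where it is large.

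Next I would split $\mathbb{R}=\{|w|\le t^{-1/\beta}\}\cup\{|w|>t^{-1/\beta}\}$. On the inner region the bound $\frac{1-e^{-t\psi_T(w)}}{\psi_T(w)}\le t$ gives a contribution at most $2t\cdot t^{-1/\beta}=2t^{1-1/\beta}$, which is $\le 2t^{\delta}$ for $t\in(0,1)$ because $\delta<1-1/\beta$. On the outer region I would instead apply $1-e^{-t\psi_T(w)}\le(t\psi_T(w))^{\delta}$, reducing matters to bounding
\[
t^{\delta}\int_{|w|>t^{-1/\beta}}\psi_T(w)^{\delta-1}\,dw .
\]
Thus the whole lemma rests on showing that this last integral stays bounded uniformly in $t\in(0,1)$ and in all large $T$, up to the prefactor $t^\delta$ and at most lower-order $t^{1-1/\beta}$ terms. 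The crucial choice here is tying the interpolation exponent to $\delta$ (rather than to $\kappa$): this is exactly what makes the $T$-powers work in our favour.

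To estimate the outer integral I would substitute $v=wT^{-1/\beta}$, turning it into $T^{1/\beta}D_T^{\delta-1}\int_{|v|>(tT)^{-1/\beta}}\psi(v)^{\delta-1}\,dv$, and split the $v$-integral at $|v|=1$. For the tail $|v|>1$ one has $\delta-1\le-\kappa$ as soon as $\kappa\le1-\delta$, so choosing such a $\kappa$ from Assumption (\textbf{C}) the tail is dominated by $\int_1^\infty\psi^{-\kappa}<\infty$; the accompanying prefactor $T^{1/\beta}D_T^{\delta-1}=T^{1/\beta+\delta-1}f(T^{1/\beta})^{1-\delta}$ tends to $0$ since $\delta<1-1/\beta$, so this piece is $\le C\,t^{\delta}$ for large $T$. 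For the part $(tT)^{-1/\beta}<|v|\le1$ I would use $\psi\in RV_0(\beta)$ together with the integration theorem for regularly varying functions (Theorem~\ref{RVT}, in the form used in Lemma~\ref{LE3}); because $\beta(\delta-1)<-1$ the integral is governed by its lower endpoint and is of order $(tT)^{1-\delta-1/\beta}$ times a slowly varying factor, and multiplying by $T^{1/\beta}D_T^{\delta-1}$ collapses it to a term of order $t^{1-1/\beta}$ times slowly varying factors. Collecting everything yields $I\le c_0\bigl(t^{\delta}+t^{1-1/\beta}(\text{s.v.})\bigr)\le c_0' t^{\delta}$ for all $T$ large enough.

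The main obstacle is the uniformity of the outer estimate simultaneously as $t\downarrow0$ and $T\to\infty$. Two points need care. First, one must select $\kappa$ in Assumption (\textbf{C}) with $\kappa\le1-\delta$; this is automatic in the stable case, where $\psi_T(w)=|w|^\beta$ and the entire computation reduces to the exact identity $\int_\mathbb{R}\frac{1-e^{-t|w|^\beta}}{|w|^\beta}\,dw=c\,t^{1-1/\beta}$, consistent with $\delta<1-1/\beta$. Second, the slowly varying factors $f(T^{1/\beta})$ and $L_0((tT)^{-1/\beta})$ emerging from the regular-variation estimates must be shown to remain controlled uniformly for $t\in(0,1)$, including the delicate range $t\lesssim 1/T$ where $(tT)^{1/\beta}$ is no longer large; here I would invoke the Potter bounds together with the asymptotic relation $L_0(T^{-1})/f(T)\to1$ from Corollary~\ref{RVpsi}, absorbing the at-most-subpolynomial growth of these factors into the genuine polynomial gap $t^{(1-1/\beta)-\delta}$ between the two exponents. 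The algebraic $T$-powers themselves cause no difficulty precisely because $\delta$ is strictly below the critical value $1-1/\beta$.
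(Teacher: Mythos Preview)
Your argument is essentially correct, but it follows a genuinely different route from the paper's.

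\textbf{What the paper does.} Rather than split at the $t$-dependent scale $|w|=t^{-1/\beta}$, the paper splits at the $T$-dependent scale $|wT^{-1/\beta}|=\epsilon_1$. On the outer piece ($|v|>\epsilon_1$, where $v=wT^{-1/\beta}$) it uses exactly your interpolation bound $(1-e^{-tx})/x\le t^\kappa x^{\kappa-1}$ with $\kappa$ from Assumption~(\textbf{C}), yielding directly $I_1\le c\,t^\kappa$ uniformly in $T$. On the inner piece it exploits $\psi\in RV_0(\beta)$ via Karamata's representation to obtain a uniform pointwise lower bound $\psi_T(w)\ge c|w|^{\beta-\epsilon_2}$, and then computes the resulting elementary integral $\int_0^t\int e^{-cu|w|^{\beta-\epsilon_2}}dw\,du$ exactly, producing $c\,t^{1-1/(\beta-\epsilon_2)}$.

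\textbf{What each approach buys.} The paper's decomposition has the advantage that the $t$-uniformity is automatic: once the pointwise comparison $\psi_T(w)\gtrsim|w|^{\beta-\epsilon_2}$ is in hand (a statement only about large $T$), the $t$-dependence drops out of a closed-form integral with no further work. Your decomposition is more conceptual---it uses only the universal bounds on $(1-e^{-s})/s$ and pushes all the regular-variation input into a single estimate on $\int\psi_T^{\delta-1}$---but the price is the joint-$(t,T)$ uniformity issue you correctly flag. Your handling of that issue is sound; one simplification worth noting is that when $tT\le 1$ the ``middle'' region $(tT)^{-1/\beta}<|v|\le 1$ is empty, so the delicate range disappears rather than requiring Potter bounds. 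Both proofs, incidentally, couple $\delta$ to the $\kappa$ of Assumption~(\textbf{C}) (yours via $1-\delta\ge\kappa$, the paper's via $\delta\le\kappa$), so neither quite proves the lemma for \emph{every} $\delta<1-1/\beta$ unless $\kappa$ is suitably placed; for the tightness applications only \emph{some} positive $\delta$ is needed, and both arguments deliver that.
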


\begin{proof}
Notice that, by Lemma~\ref{clim}
\begin{equation}
\lim_{w\rightarrow 0}\frac{\psi(w)}{|w|^\beta f(1/w)}=c_1,
\end{equation}
for some finite constant $c_1$. Thus, there exists $\epsilon_1>0$ such that
\begin{equation}\label{nloc0}
\frac{1}{2}c_1 \leq \frac{\psi(w)}{|w|^\beta f(1/w)} \leq 2c_1,
\end{equation}
for $|w|<\epsilon_1$. We may write the left-hand side of~\eqref{nloc1} as $I_1$ + $I_2$, with
\begin{eqnarray}
I_1 &=& \int_0^t \int_{|wT^{-1/\beta}|>\epsilon_1} e^{-u\psi_T(w)}dwdu,\\
I_2 &=& \int_0^t \int_{|wT^{-1/\beta}|\leq \epsilon_1} e^{-u\psi_T(w)}dwdu.\\
\end{eqnarray}
Let us consider $I_1$ first. Since for any $t>0$ and $x>0$
\begin{equation*}
\left|\frac{1-e^{-tx}}{x}\right|\leq min(t,1/x),
\end{equation*}
we have that, in particular, for $\kappa \in (0,1)$
\begin{equation*}
\left|\frac{1-e^{-tx}}{x}\right|\leq t^\kappa x^{\kappa-1}.
\end{equation*}
Therefore, for all $T$ large enough,
\begin{eqnarray*}
I_1 &\leq& \int_{|wT^{-1/\beta}|>\epsilon_1}\Big(\frac{1}{\psi_T(w)}\Big)^{1-\kappa}t^\kappa dw \\
&=& \int_{|w|>\epsilon_1}\Big(\frac{1}{D_T\psi(w)}\Big)^{1-\kappa}T^{1/\beta}t^\kappa dw \leq c_2t^\kappa
\end{eqnarray*} 
for some constant $c_2$ independent of $t$ and $T$. As for $I_2$, one can easily deduce from~\eqref{nloc0} that for $|wT^{-1/\beta}|\leq \epsilon_1$
\begin{equation*}
\psi_T(w)\geq \frac{1}{2}|w|^{\beta}\frac{f(T^{1/\beta}/w)}{f(T^{1/\beta})}.
\end{equation*} 
Fix any $\epsilon_2>0$. An application of Karamata's representation theorem (see for example Theorem 10.5.7 in~\cite{SPLRD}) yields the inequality
\begin{equation}
\frac{f(T^{1/\beta}/w)}{f(T^{1/\beta})}\geq c_3|w|^{-\epsilon_2}
\end{equation}
for all $T$ large enough, $|w|>1$ and $|wT^{-1/\beta}|\leq \epsilon_1$, provided we choose $\epsilon_1$ small enough. $c_3$ is a positive constant independent of $T$. Using all this we may write
\begin{eqnarray*}
I_2 &\leq&  \int_{1<|w|\leq T^{1/\beta}\epsilon_1}\int_0^t e^{-\frac{1}{2}u c_1c_3 |w|^{\beta-\epsilon_2}}dudw + t\int_{|w|<1}dw \\
&\leq& c_4 t^{1-(\beta-\epsilon_2)^{-1}} + 2t,
\end{eqnarray*}
provided we choose $\epsilon_2$ small enough. Thus, we can take
\begin{equation}\label{deltat}
\delta = 1-(\beta-\epsilon_2)^{-1}
\end{equation}
and the proof is finished since $\beta\in (1,2)$.
\end{proof}

\begin{lem}\label{mloc3}
Assume that (\textbf{A}) and (\textbf{D}) hold. The for any $t>0, \kappa>0$ and all $T$ sufficiently large we have the following inequalities:

\begin{equation}\label{mloc3_1}
\int_\mathbb{R} \int_0^t \Big(1\wedge \Big|\frac{w}{T^{1/\beta}}\Big|^{\kappa}\Big)e^{-u\psi_T(w)}dudw \leq c_1(\psi,\beta,\kappa,t)(T^{1/\beta}D_T^{-1}+T^{-\kappa/\beta}),
\end{equation}

\begin{equation}\label{mloc3_4}
\int_\mathbb{R} \int_0^{D_Tt}\Big(1\wedge |w|^{\kappa}\Big)e^{-u\psi(w)}dudw \leq c_4(\psi,\beta,\kappa,t)(1+T^{-1/\beta-\kappa/\beta}D_T).
\end{equation}

In particular for $\kappa>(\beta-1)/2$
\begin{equation}\label{mloc3_2}
\int_\mathbb{R} \int_0^t \Big(1\wedge \Big|\frac{w}{T^{1/\beta}}\Big|^{2\kappa}\Big)e^{-u\psi_T(w)}dudw \leq c_2(\psi,\beta,\kappa,t)T^{1/\beta}D_T^{-1}.
\end{equation}
Furthermore
\begin{equation}\label{mloc3_3}
\int_\mathbb{R} \int_0^{D_Tt} e^{-u\psi(w)}dudw \leq c_3(\psi,\beta,t)T^{-1/\beta}D_T,
\end{equation}

for some finite constants $c_1,c_2,c_3$ and $c_4$ independent of $T$.
\end{lem}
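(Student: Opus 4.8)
The four estimates share a common structure, so the plan is to prove all of them by the same recipe: integrate out the time variable, split the spatial integral into a bulk near the origin, an intermediate annulus, and a far tail, and control the annulus by Karamata's theorem. First I would eliminate the $u$-integration. Using $\int_0^s e^{-ua}\,du=(1-e^{-sa})/a\le\min(s,a^{-1})$, the left-hand side of \eqref{mloc3_1} (and of \eqref{mloc3_2}) is bounded by $\int_{\mathbb{R}}\big(1\wedge|w/T^{1/\beta}|^{\kappa}\big)\min\big(t,\psi_T(w)^{-1}\big)\,dw$, while the left-hand sides of \eqref{mloc3_3}--\eqref{mloc3_4} are bounded by $\int_{\mathbb{R}}\big(1\wedge|w|^{\kappa}\big)\min\big(D_Tt,\psi(w)^{-1}\big)\,dw$, with \eqref{mloc3_3} being the weight-free case. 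Recalling $\psi_T(w)=D_T\psi(wT^{-1/\beta})$ and $\psi(v)\sim c_1|v|^{\beta}f(1/v)$ from Corollary~\ref{RVpsi}, the crossover of the two arguments of each minimum sits near $|w|\approx1$ in the rescaled integrals and near $|w|\approx T^{-1/\beta}$ in the unscaled ones, and these are the natural splitting radii.

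For \eqref{mloc3_1} I would split $\mathbb{R}$ into $\{|w|\le1\}$, $\{1<|w|\le T^{1/\beta}\}$ and $\{|w|>T^{1/\beta}\}$. On the innermost set the weight equals $T^{-\kappa/\beta}|w|^{\kappa}$ and bounding the minimum by $t$ produces $\tfrac{2t}{\kappa+1}T^{-\kappa/\beta}$. On the outermost set the weight is $1$; bounding the minimum by $\psi_T(w)^{-1}$ and substituting $w=T^{1/\beta}v$ gives $T^{1/\beta}D_T^{-1}\int_{|v|>1}\psi(v)^{-1}\,dv\le c\,T^{1/\beta}D_T^{-1}$ by the integrability condition $\int_1^\infty\psi^{-1}<\infty$ of Assumption (\textbf{A}). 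The same substitution on the annulus leaves $T^{1/\beta}D_T^{-1}\int_{T^{-1/\beta}<|v|\le1}|v|^{\kappa}\psi(v)^{-1}\,dv$. For \eqref{mloc3_3}--\eqref{mloc3_4} the analogous split is at $|w|=T^{-1/\beta}$ and $|w|=1$: the bulk $\{|w|\le T^{-1/\beta}\}$ yields the $T^{-1/\beta}D_T$ term (respectively $T^{-(\kappa+1)/\beta}D_T$, which is exactly $T^{-1/\beta-\kappa/\beta}D_T$), the tail $\{|w|>1\}$ contributes an $O(1)$ constant via Assumption (\textbf{A}), and the annulus leaves $\int_{T^{-1/\beta}<|w|\le1}|w|^{\kappa}\psi(w)^{-1}\,dw$.

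The crux is this annulus integral $\int_{T^{-1/\beta}}^{1}v^{\kappa}\psi(v)^{-1}\,dv$. Since $v\mapsto v^{-\kappa}\psi(v)\in RV_0(\beta-\kappa)$, when $\kappa<\beta-1$ (so $\beta-\kappa\in(1,2)$) the version of Lemma~\ref{LE3} with $\beta$ replaced by $\beta-\kappa$ gives $\int_{w}^{1}v^{\kappa}\psi(v)^{-1}\,dv\sim(\beta-\kappa-1)^{-1}w^{\kappa+1}\psi(w)^{-1}$ as $w\to0$, whereas for $\kappa>\beta-1$ the integrand is integrable at $0$ and the integral is $O(1)$. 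Substituting $w=T^{-1/\beta}$ and using $\psi(T^{-1/\beta})^{-1}=T\,L_0(T^{-1/\beta})^{-1}\sim T f(T^{1/\beta})^{-1}=D_T$, which follows from $L_0(T^{-1})/f(T)\to1$ in Corollary~\ref{RVpsi}, makes the slowly varying factors cancel \emph{exactly}: the annulus then contributes $c\,T^{1/\beta}D_T^{-1}$ or $c\,T^{-\kappa/\beta}$ to \eqref{mloc3_1} according to the sign of $\kappa-(\beta-1)$, and $O(1)$ or $c\,T^{-(\kappa+1)/\beta}D_T$ to \eqref{mloc3_4}. Summing the three pieces gives each claimed bound. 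Finally \eqref{mloc3_2} follows from \eqref{mloc3_1} with $\kappa$ replaced by $2\kappa$: since $2\kappa>\beta-1$ one checks $T^{-2\kappa/\beta}\le c\,T^{1/\beta}D_T^{-1}$ for all large $T$, so the second term is absorbed and only $c\,T^{1/\beta}D_T^{-1}$ survives.

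The step I expect to be the main obstacle is precisely this exact cancellation of slowly varying functions in the annulus estimate. The bounds are stated as clean powers of $T$ (times $D_T$), so one must verify that the slowly varying part $f(1/\cdot)$ of $\psi$ and the factor $f(T^{1/\beta})$ hidden in $D_T$ combine to a genuine constant rather than leaving a residual slowly varying factor; this is exactly what Corollary~\ref{RVpsi} together with the Karamata asymptotics of Theorem~\ref{RVT} and Lemma~\ref{LE3} deliver. The only delicate point is the borderline exponent $\kappa=\beta-1$, where the annulus integral acquires a logarithm and the two target terms coincide up to a slowly varying factor; since this value never arises in the applications of the lemma (in \eqref{mloc3_2} one always has $2\kappa>\beta-1$, and the other uses take $\kappa$ away from $\beta-1$), it can be excluded or treated by a marginal adjustment of the exponent.
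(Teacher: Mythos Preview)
Your proof is correct and follows essentially the same route as the paper: eliminate the time integral, split the $w$-domain at scales $T^{-1/\beta}$ and $1$ (after the natural change of variables $w\mapsto wT^{1/\beta}$, $u\mapsto u/D_T$), use Assumption~(\textbf{A}) on the tail, a trivial bound on the bulk, and the Karamata estimate of Lemma~\ref{LE3} on the annulus $\int_{T^{-1/\beta}}^{1}v^{\kappa}\psi(v)^{-1}dv$. The paper handles \eqref{mloc3_2} slightly more directly---noting that $\int_{\mathbb{R}}(1\wedge|w|^{2\kappa})\psi(w)^{-1}\,dw<\infty$ when $2\kappa>\beta-1$ so the change of variables immediately yields $c\,T^{1/\beta}D_T^{-1}$---and obtains \eqref{mloc3_3} by invoking Lemma~\ref{LE5'}, but your reduction of these to \eqref{mloc3_1} and the common three-region split is equally valid and arguably more explicit about the $\kappa\lessgtr\beta-1$ dichotomy.
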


\begin{proof}
After a change of variables $w'=T^{1/\beta}w$, $u' = u/D_T$, the left-hand side of~\eqref{mloc3_1} can be written as
\begin{eqnarray}
&&T^{1/\beta}D_T^{-1}\int_\mathbb{R} \int_0^{D_Tt} \Big(1\wedge |w|^{\kappa}\Big)e^{-u\psi(w)}dudw\label{mloc4} \\
&\leq& T^{1/\beta}D_T^{-1}\Big(\int_{|w|>1}\psi(w)^{-1}dw + \int_{|w|\leq 1}\int_0^{D_Tt}|w|^\kappa e^{-u\psi(w)}dwdu\Big) \label{mloc4'}
\end{eqnarray}
The first integral in~\eqref{mloc4'} is bounded by Assumption (\textbf{A}). We can bound the second by
\begin{equation*}
\int_{|w|\leq T^{-1/\beta}}D_T|w|^\kappa dw + \int_{T^{-1/\beta}<|w|\leq 1}|w|^\kappa\psi(w)^{-1}dw.
\end{equation*}
An application of Lemma~\ref{LE3} gives inequality~\eqref{mloc3_1} and~\ref{mloc3_4}. Inequality~\eqref{mloc3_2} follows immediately once we make a change of variables and use the fact that  $\int_\mathbb{R}\big(1\wedge |w|^{2\kappa}\big)\psi(w)^{-1}dw<\infty$. The inequality~\eqref{mloc3_3} is just Lemma~\ref{LE5'} after a change of variables. 
\end{proof}

\section{Proofs for Section~\ref{WER}}\label{sec_ros}

\subsection{Proof of Proposition~\ref{PROP1}}\label{Psec2}

The proof is relatively straightforward once we use the Fourier transform to show the convergence of appropriate moments, therefore we only give a short sketch.

\begin{proof}[Sketch of proof of Proposition~\ref{PROP1}]
Let us put
\begin{equation}\label{P_def}
P_t^T (x) = \frac{1}{F_T}\int_0^{Tf(T^{1/\beta})^{-1}t}\phi(\eta_s-T^{1/\beta}x)ds,
\end{equation}
for $T,t \geq 0$, with $F_T = T^{1-1/\beta}f(T^{1/\beta})^{-1}$. An easy application of Plancherel formula and change of variables formula shows that for any positive integer $k$
\begin{multline}\label{loc2}
\mathbb{E}\big(P_t^T(x)^k\big) = \\
k!\bigg(\frac{1}{2\pi}\bigg)^k \int_{0<s_1 < \ldots < s_k <t}\int_{\mathbb{R}^k}\widehat{\phi}\bigg(\frac{w_1 - w_2}{T^{1/\beta}}\bigg)\widehat{\phi}\bigg(\frac{w_2 - w_3}{T^{1/\beta}}\bigg)\\
\times\ldots \times \widehat{\phi}\bigg(\frac{w_{k-1} - w_k}{T^{1/\beta}}\bigg)\widehat{\phi}\bigg(\frac{w_k}{T^{1/\beta}}\bigg)\\
\times e^{iw_1x}e^{-s_1 \psi_T(w_1)}\ldots e^{-(s_k-s_{k-1}) \psi_T(w_k)} dw_1\ldots dw_k ds_1 \ldots ds_k.
\end{multline}
We would like to take the limit under the integral sign. However, due to the terms $\psi_T$ the use of dominated convergence cannot be justified as simply as in the proof of the stable case. Recall that, by Lemma~\ref{LE5'}, 
\begin{equation}
\int_0^t \int_\mathbb{R} e^{-u\psi_T(z)} dz du
\end{equation}
is bounded uniformly in $T\geq 1$. Now, fix some $K>0$. The integral in~\eqref{loc2} with $\mathbb{R}^k$ replaced by $G_K:=\{(w_1,\ldots,w_k): |w_1|,\ldots,|w_k|\leq K\}$ converges to 
\begin{multline}
k!\bigg(\frac{1}{2\pi}\bigg)^k \bigg(\int_\mathbb{R}\phi(y)dy\bigg)^k \int_{0<s_1 < \ldots < s_k <t}\int_{G_K}e^{ixw_1}e^{-s_1 |w_1|^\beta}\ldots e^{-(s_k-s_{k-1}) |w_k|^\beta}\\
 dw_1\ldots dw_k ds_1 \ldots ds_k,
\end{multline}
by dominated convergence theorem. In view of Lemma~\ref{LE2}, the integral in~\eqref{loc2} with $\mathbb{R}^k$ replaced by $\mathbb{R}^k/G_K$ can be made arbitrarily small for $K$ large enough. By Lemma~\ref{lmoments} in the Appendix
\begin{multline}
k!\bigg(\frac{1}{2\pi}\bigg)^k\int_{0<s_1 < \ldots < s_k <t}\int_{\mathbb{R}^k}e^{ixw_1}e^{-s_1 |w_1|^\beta}\ldots e^{-(s_k-s_{k-1}) |w_k|^\beta}\\
 dw_1\ldots dw_k ds_1 \ldots ds_k=\mathbb{E}L_t(x)^k.
\end{multline}

Very similarly one shows that mixed moments moments of the process 
$(P_t^T(x))_{t\geq 0}$ converge to the mixed moments of the limit process. Thus, we establish the convergence of finite-dimensional distributions. \\

Tightness  under Assumption (\textbf{C}) follows almost immediately. One just has to notice that for $s<t$, a calculation similar to the one in~\eqref{loc2} and Lemma~\ref{LTIGHT1} imply that for $k$ sufficiently large $\mathbb{E}(P_t^T(x) - P_s^T(x))^k$ can be bounded by $(t-s)^\gamma$ for some $\gamma>1$ and then use Kolmogorov's tightness criterion.

\end{proof}

\subsection{Proof of Proposition~\ref{PROP2}}\label{Psec3}

\begin{proof}
Put for $T\geq 1, x \in \mathbb{R}$ and $t>0$
\begin{equation}\label{pot}
\widetilde{P}_t^T(x)=\frac{1}{F_T^{1/2}}\int_0^{Tf(T^{1/\beta})^{-1}t}\phi(T^{1/\beta}x - \eta_s)ds,
\end{equation}
with $F_T$ as in the statement of the proposition. We start by showing that for a fixed $t>0, x \in \mathbb{R}$ and an even positive integer $k$ we have
\begin{multline}\label{ocm}
\lim_{T\rightarrow \infty}\mathbb{E}\big(\widetilde{P}_t^T(x)\big)^k = \frac{k!}{(k/2)!}\Big(\frac{1}{\pi}\int_\mathbb{R}\big|\widehat{\phi}(w)\big|^2\frac{1}{\psi(w)}dw\Big)^{k/2}\mathbb{E}\Big(L_t(x)^{k/2}\Big).
\end{multline}
Similarly as in~\eqref{loc2}, after a change of variables we obtain that $\mathbb{E}\big(\widetilde{P}_t^T(x)\big)^k$ is equal to
\begin{eqnarray}\label{jloc1}
 k!\Big(\frac{1}{2\pi}\Big)^{k}\int_{\mathbb{R}^k}\int_{\mathbb{R}^k}&&\mathbf{1}_{\{0<u_1<D_T^{-1}u_2+u_1<u_3<\ldots<u_{k-1}<D_T^{-1}u_k+u_{k-1}<t\}} \nonumber\\
&& \: \times \widehat{\phi}\big(\frac{w_1}{T^{1/\beta}}-w_2\big)\widehat{\phi}\big(w_2-\frac{w_3}{T^{1/\beta}}\big)\ldots \widehat{\phi}\big(\frac{w_{k-1}}{T^{1/\beta}}-w_k\big)\widehat{\phi}\big(w_k\big)\nonumber\\
&& \: \times e^{iw_1x}e^{-u_1\psi_T(w_1)}e^{-u_2\psi(w_2)}e^{-(u_3-D_T^{-1}u_2-u_1)\psi_T(w_3)}\ldots \times\nonumber\\
&& \: \ldots \times e^{-(u_{k-1}-D_T^{-1}u_{k-2}-u_{k-3})\psi_T(w_{k-1})}e^{-u_k\psi(w_k)} \nonumber\\
&& \:du_1\ldots du_k dw_1 \ldots dw_k.
\end{eqnarray}
For $z,w\in \mathbb{R}$ and $T\geq 1$ let us define
\begin{eqnarray}
a_T(w,z)&=&\widehat{\phi}(w/T^{1\beta}-z)-\widehat{\phi}(-z),\\
b(z)&=&\widehat{\phi}(-z).
\end{eqnarray}
Then~\eqref{jloc1} can be rewritten as
\begin{eqnarray}\label{jloc1'}
 k!\Big(\frac{1}{2\pi}\Big)^{k}\int_{\mathbb{R}^k}\int_{\mathbb{R}^k}&&\mathbf{1}_{\{0<u_1<D_T^{-1}u_2+u_1<u_3<\ldots<u_{k-1}<D_T^{-1}u_k+u_{k-1}<t\}} \nonumber\\
&& \: \times \big(a_T(w_1,w_2)+b(w_2)\big)\big(\overline{a_T(w_2,w_3)}+\overline{b(w_2)}\big)\ldots \nonumber\\
&& \: \times \big(a_T(w_{k-1},w_k)+b(w_k)\big) \big(\overline{b(w_k)}\big)\nonumber\\
&& \: \times e^{iw_1x}e^{-u_1\psi_T(w_1)}e^{-u_2\psi(w_2)}e^{-(u_3-D_T^{-1}u_2-u_1)\psi_T(w_3)}\ldots \times\nonumber\\
&& \: \ldots \times e^{-(u_{k-1}-D_T^{-1}u_{k-2}-u_{k-3})\psi_T(w_{k-1})}e^{-u_k\psi(w_k)} \nonumber\\
&& \:du_1\ldots du_k dw_1 \ldots dw_k.
\end{eqnarray}
We will show that out of all $2^{k-1}$ expressions that we get by multiplying the parentheses with the terms $a_T$ and $b$ in~\eqref{jloc1'}, the only term that does not converge to zero as $T\rightarrow\infty$ is the one in which only $b$'s appear. In fact, we will only prove that the term with
\begin{equation*}
a_T(w_1,w_2)\overline{a_T(w_2,w_3)}\ldots a_T(w_{k-1},w_k)\overline{b_T(w_k)}
\end{equation*}
converges to $0$, the other cases being very similar as the integral with respect to $w_1,\ldots,w_k$ factorizes. Let us denote this term by $M$. Since we assume that $\int_\mathbb{R}|\phi(y)|dy=1$ we see that by Assumption (\textbf{D}) (we can without loss of generality assume that $C=1$ in the formulation of the Assumption (\textbf{D})), 
\begin{eqnarray}\label{jloc1''}
M &\leq& \int_{\mathbb{R}^k}\int_0^t \int_0^{D_Tt}\ldots \int_0^t \int_0^{D_Tt}\nonumber\\
&& \:\big(1\wedge|w_k|^{\kappa}\big)\big(1\wedge|w_{k-1}T^{-1/\beta}|^{2\kappa}\big)\big(1\wedge|w_{k-3}T^{-1/\beta}|^{2\kappa}\big)\nonumber\\
&& \:\times\ldots \times\big(1\wedge|w_3T^{-1/\beta}|^{2\kappa}\big)\big(1\wedge|w_1T^{-1/\beta}|^{\kappa}\big)\nonumber\\
&& \times e^{-u_1\psi_T(w_1)}e^{-u_2\psi(w_2)}e^{-u_3\psi_T(w_3)}\ldots e^{-u_{k-1}\psi_T(w_{k-1})}e^{-u_k\psi(w_k)}\nonumber\\
&& \:du_1\ldots du_k dw_1\ldots dw_k. 
\end{eqnarray}
Now, Lemma~\ref{mloc3} implies that 
\begin{equation}
M\leq c_1(T^{1/\beta}D_T^{-1}+T^{-\kappa/\beta})\times (1+T^{-1/\beta}D_T T^{-\kappa/\beta}),
\end{equation}
for some finite constant $c_1$ independent of $T$. Since $\kappa>(\beta-1)/2$, $M$ converges to $0$ as $T\rightarrow \infty$. The only significant term in~\eqref{jloc1'} is thus given by
\begin{eqnarray}\label{jloc1'''}
 k!\Big(\frac{1}{2\pi}\Big)^{k}\int_{\mathbb{R}^k}\int_{\mathbb{R}^k}&&\mathbf{1}_{\{0<u_1<D_T^{-1}u_2+u_1<u_3<\ldots<u_{k-1}<D_T^{-1}u_k+u_{k-1}<t\}} \nonumber\\
&& \: \times b_T(w_2)\overline{b_T(w_2)}b_T(w_k)\overline{b_T(w_k)}\nonumber\\
&& \: \times e^{iw_1x}e^{-u_1\psi_T(w_1)}e^{-u_2\psi(w_2)}e^{-(u_3-D_T^{-1}u_2-u_1)\psi_T(w_3)}\ldots \times\nonumber\\
&& \: \ldots \times e^{-(u_{k-1}-D_T^{-1}u_{k-2}-u_{k-3})\psi_T(w_{k-1})}e^{-u_k\psi(w_k)} \nonumber\\
&& \:du_1\ldots du_k dw_1 \ldots dw_k,
\end{eqnarray}
which converges, by dominated convergence theorem, to the right-hand side of~\eqref{ocm} (see Lemma~\ref{lmoments}). \\

Very similarly one shows that for all odd positive integers $k$ the respective moments converge to $0$ and that for any $t_1\leq \ldots t_k$ and $x\in \mathbb{R}$
\begin{equation}
\mathbb{E}\big(\widetilde{P}_{t_1}^T(x)\ldots \widetilde{P}_{t_k}^T(x)\big) \rightarrow c(\phi)^k\mathbb{E}(W_{L_{t_1}(x)}^k).
\end{equation}

Showing that, provided the Assumption (\textbf{C}) holds, the sequence of processes on the left-hand side of~\eqref{ocm} is tight amounts to repeating the arguments used in the proof of Lemma~\ref{LTIGHT1}. One must just notice that given our assumptions there is a constant $c_2$ independent of $T$ such that for $k$ even
\begin{multline}
\mathbb{E}\big(\widetilde{P}_t^T(x)-\widetilde{P}_s^T(x)\big)^k \leq \\
\int_{\mathbb{R}^{k/2}}\int_{[0,t-s]^{k/2}}e^{-u_1\psi_T(w_1)}e^{-u_3\psi_T(w_3)}\ldots e^{-u_{k-1}\psi_T(w_{k-1})}\\
du_1du_3\ldots du_{k-1} dw_1dw_3\ldots dw_{k-1}.
\end{multline}
This and~\eqref{nloc1} imply that 
\begin{equation}
\mathbb{E}\big(\widetilde{P}_t^T(x)-\widetilde{P}_s^T(x)\big)^k \leq c_3 (t-s)^{k\delta/2}
\end{equation}
for some finite constant $c_3$ independent of $T$. Taking $k$ large enough we may apply the Kolmogorov's tightnes criterion (see Theorem 12.3 in~\cite{BILL}) and infer that the sequence of processes $(P^T(x))$ is tight in $\mathcal{C}[0,\infty)$.
\end{proof}

\section{Proof of Theorem~\ref{THM1}}\label{sec_fol}

\begin{proof}
Let $a_k \in \mathbb{R}, t_k\geq 0$, $k=1,\ldots,m$, put $\bar{G}^T = \sum_k^m a_kG_{t_k}^T$ and $\bar{P}^T(x) = \sum_k^m a_kP_{t_k}^T(x)$ for $t\geq 0,T\geq 1$ and $x\in \mathbb{R}$, with $P^T$ and $G^T$ defined by~\eqref{P_def} and~\eqref{G1}, respectively. The characteristic function of $\bar{G}^T$, after a change of variables $z:=T^{1/\alpha \beta}z$ and $x:=-T^{1/\beta}C_T^{-1}x$, can be written as 
\begin{multline}\label{GCHAR}
\mathbb{E} exp(i \bar{G}^T)= \\
exp\Bigg(\int_{\mathbb{R}^2}\mathbb{E}\bigg(e^{i z \bar{P}^T(x)}-1\bigg)\mathbf{1}_{\{|z|>T^{-1/\alpha \beta}\epsilon\}}
|z|^{-\alpha-1}\frac{L(T^{1/\alpha \beta}z)}{L(T^{1/\alpha \beta})}dzdx\Bigg).
\end{multline}
By the symmetry of function $L$, ~\eqref{GCHAR} can be rewritten as
\begin{eqnarray}
\mathbb{E} exp(i\theta \bar{G}^T)&=&exp\Big(\int_{\mathbb{R}^2}\mathbb{E}\big(e^{i\theta z \bar{P}^T(x)}-\mathbf{1}_{\{|z|\leq 1\}}i\theta z\bar{P}^T(x) - 1\big)\nonumber\\
&& \: \times\mathbf{1}_{\{|z|>T^{-1/\alpha \beta}\epsilon\}}
|z|^{-\alpha-1}\frac{L(T^{1/\alpha \beta}z)}{L(T^{1/\alpha \beta})}dzdx\Big).\label{GCHARS}
\end{eqnarray}
The quantity in~\eqref{GCHAR}, by Proposition~\ref{PROP1}, converges pointwise as $T\rightarrow \infty$ to 
\begin{equation}
\mathbb{E}exp\left(i\theta\int_{\mathbb{R}}\phi(y)dy\sum_{k=1}^m a_m X_{t_k}\right),
\end{equation}
with $X$ being the process given by~\eqref{G1}, so it remains to justify that we can go with the limit under the integral sign. This requires a number of observations, which, for greater clarity, are given in a lemma below.

\begin{lem}\label{GLEM}
Let $P^T$ be as in~\eqref{P_def} and assume that the conditions of Theorem~\ref{THM1} are satisfied. Then the following claims are true.
\begin{enumerate}
\item[(i)]
The functions 
\begin{equation}
x \mapsto \mathbb{E}P_t^T(x),
\end{equation}
and
\begin{equation}
x \mapsto \mathbb{E}P_t^T(x)^2,
\end{equation}
are bounded uniformly in $T\geq 1$.
\item[(ii)]
Both $\int_\mathbb{R}\mathbb{E}|P_t^T(x)|dx$ and $\int_\mathbb{R}\mathbb{E}|P_t^T(x)|^2dx$ are bounded uniformly in $T\geq 1$.
\item[(iii)]
For any $\delta>0$ and there exist $K>0$ and $T_0\geq 1$ such that both
\begin{equation}
\int_{|x|>K} \mathbb{E}|P_t^T(x)|dx<\delta
\end{equation}
and
\begin{equation}
\int_{|x|>K} \mathbb{E}|P_t^T(x)|^2dx<\delta,
\end{equation}
for all $T\geq T_0$.
\item[(iv)]
\begin{equation}
\lim_{T\rightarrow \infty}\int_{\mathbb{R}}\int_{|z|\leq T^{-1/\alpha \beta}\epsilon}\mathbb{E}\left(|z|^2|P_t^T(x)|^2\right)|z|^{-\alpha-1}\frac{L(T^{1/\alpha \beta}z)}{L(T^{1/\alpha \beta})}dzdx = 0.
\end{equation}
\item[(v)]
For any $r\in (0,1)$ there exists $T_0\geq 1$ and constant $C$, depending only on $\alpha$, such that 
\begin{equation}
\int_{|z|\leq r}|z|^{1-\alpha}\frac{L(T^{1/\alpha \beta}z)}{L(T^{1/\alpha \beta})}dz \leq Cr^{2-\alpha}.
\end{equation}

\end{enumerate}
\end{lem}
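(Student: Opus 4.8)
The plan is to reduce all five claims to a single pointwise bound on the transition density of $\eta$. Assumption~(\textbf{A}) together with $\int_1^\infty\psi(z)^{-1}\,dz<\infty$ guarantees (via $e^{-x}\le 1/x$ and Fourier inversion) that $\eta_s$ has a bounded continuous density $p_s^\eta(y)=\frac{1}{2\pi}\int_\mathbb{R}e^{-iwy}e^{-s\psi(w)}\,dw$, so that $p_s^\eta(y)\le\frac{1}{2\pi}\int_\mathbb{R}e^{-s\psi(w)}\,dw$. After the substitution $s=D_Tu$ and the rescaling $w\mapsto w/T^{1/\beta}$ (recall $\psi_T(w)=D_T\psi(w/T^{1/\beta})$ and $D_T/F_T=T^{1/\beta}$) this yields the uniform bound
\[
T^{1/\beta}p_{D_Tu}^\eta(y)\le h_T(u):=\frac{1}{2\pi}\int_\mathbb{R}e^{-u\psi_T(w)}\,dw,\qquad y\in\mathbb{R},
\]
and by Lemma~\ref{LE5'} the quantity $\int_0^t h_T(u)\,du$ is bounded uniformly in $T\ge1$. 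Writing $P_t^T(x)=F_T^{-1}\int_0^{D_Tt}\phi(\eta_s-T^{1/\beta}x)\,ds$ and inserting the density representation gives $\mathbb{E}\lvert P_t^T(x)\rvert\le\int_0^t h_T(u)\,du$ (using the standing normalization $\int_\mathbb{R}|\phi|=1$), which proves (i) for the first moment; the two-point Markov representation of $\mathbb{E}P_t^T(x)^2$ with the same bound applied to each density factor gives $\mathbb{E}P_t^T(x)^2\le 2\bigl(\int_0^t h_T(u)\,du\bigr)^2$, which is (i) for the second moment.

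For (ii) I would integrate these representations in $x$ \emph{before} bounding. Since $\int_\mathbb{R}T^{1/\beta}p_{D_Tu}^\eta(z+T^{1/\beta}x)\,dx=1$ for every $z,u$, integrating the first-moment representation over $x$ gives directly $\int_\mathbb{R}\mathbb{E}\lvert P_t^T(x)\rvert\,dx\le t$; for the second moment I integrate out the factor tied to $x$ to the value $1$ and bound the remaining factor by $h_T(u_2)$, obtaining a bound of order $\int_0^t h_T(u)\,du$, again uniform in $T$.

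The delicate claim, and the main obstacle, is the uniform tail estimate (iii), since uniform-in-$x$ integrability alone does not prevent mass escaping to infinity as $T$ varies. Here I would split $\phi=\phi_1+\phi_2$ with $\phi_1$ supported in $[-R,R]$ and $\|\phi_2\|_1$ arbitrarily small; the $\phi_2$-part is controlled uniformly in $K$ and $T$ by the global bound from (ii). For the $\phi_1$-part, integrating the $x$-dependent density factor over $\{|x|>K\}$ and changing variables identifies it with $\mathbb{P}\bigl(|T^{-1/\beta}\eta_{D_Tu}|>K-T^{-1/\beta}R\bigr)$. At this point the weak convergence $T^{-1/\beta}\eta_{D_Tu}\Rightarrow\xi_u$ to a symmetric $\beta$-stable variable, furnished by~\eqref{LSCAL} in Corollary~\ref{RVpsi}, lets me pass by dominated convergence in $u\in[0,t]$ (the probabilities are bounded by $1$) to $\int_0^t\mathbb{P}(|\xi_u|>K)\,du\le t\,\mathbb{P}(|\xi_t|>K)$, which tends to $0$ as $K\to\infty$. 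Thus one fixes $K$ large first, then $T_0$ large; the identical scheme, with the $x$-factor integrated out and the other factor bounded by $h_T(u_2)$, handles the second-moment tail.

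Finally, (v) is a Karamata estimate: substituting $w=T^{1/(\alpha\beta)}z$ turns the integral into $T^{(\alpha-2)/(\alpha\beta)}L(T^{1/(\alpha\beta)})^{-1}\int_{|w|\le rT^{1/(\alpha\beta)}}|w|^{1-\alpha}L(w)\,dw$, and since $w\mapsto|w|^{1-\alpha}L(w)$ is regularly varying at infinity with exponent $1-\alpha\in(-1,0)$, Theorem~\ref{RVT} gives $\int_0^x w^{1-\alpha}L(w)\,dw\sim\frac{x^{2-\alpha}L(x)}{2-\alpha}$; the powers of $T$ cancel and the slowly varying ratio $L(rT^{1/(\alpha\beta)})/L(T^{1/(\alpha\beta)})\to1$ leaves $Cr^{2-\alpha}$ with $C$ depending only on $\alpha$, once $T\ge T_0(r)$. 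Claim (iv) is then immediate: the integrand factorizes as $\bigl(\int_\mathbb{R}\mathbb{E}P_t^T(x)^2\,dx\bigr)\bigl(\int_{|z|\le T^{-1/(\alpha\beta)}\epsilon}|z|^{1-\alpha}\tfrac{L(T^{1/(\alpha\beta)}z)}{L(T^{1/(\alpha\beta)})}\,dz\bigr)$, the first factor being bounded by (ii), while the same substitution shows the second factor equals $T^{(\alpha-2)/(\alpha\beta)}L(T^{1/(\alpha\beta)})^{-1}\int_{|w|\le\epsilon}|w|^{1-\alpha}L(w)\,dw$; the last integral is a fixed finite constant ($|w|^{1-\alpha}$ is integrable near $0$ and $L$ may be taken locally bounded there, since only $|z_j|\ge\epsilon$ enter the model), and it is multiplied by a negative power of $T$ up to the slowly varying factor $T^{o(1)}$, so the second factor $\to0$ because $\alpha<2$.
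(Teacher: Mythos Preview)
Your argument is correct and follows essentially the same route as the paper. Both proofs rest on the same two ingredients: the uniform bound $\int_0^t\int_\mathbb{R}e^{-u\psi_T(w)}\,dw\,du\le C$ from Lemma~\ref{LE5'} for (i)--(ii), and the weak convergence~\eqref{LSCAL} for the tail estimate (iii); you phrase the first through the transition density $p_s^\eta$ while the paper works directly in Fourier, but the two are the same computation. For (iv)--(v) the paper simply cites Theorem~10.5.6 in~\cite{SPLRD} and skips the details, so your Karamata substitution is exactly what is meant.

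The only place you do slightly more work than needed is (iii): your splitting $\phi=\phi_1+\phi_2$ with $\phi_1$ compactly supported is unnecessary. The paper instead replaces $\phi$ by $|\phi|$ and writes
\[
\int_K^\infty \mathbb{E}\lvert P_t^T(x)\rvert\,dx\le\int_0^t\int_\mathbb{R}|\phi(y)|\,\mathbb{P}\bigl(T^{-1/\beta}\eta_{D_Tu}>K+yT^{-1/\beta}\bigr)\,dy\,du,
\]
then passes to the limit in $T$ by dominated convergence in $(u,y)$ with majorant $|\phi(y)|$; the compact support of $\phi_1$ in your version is just a device to get the same uniformity in $y$ that dominated convergence already supplies. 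Both arguments are valid.
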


\begin{proof}[Proof of Lemma~\ref{GLEM}]
Changing variables of variables and using Plancherel and Fubini's theorems, for $x\in \mathbb{R},t\geq 0, T>0$ and $\phi \in L^1(\mathbb{R})$ we have
\begin{equation}
\mathbb{E}P_t^T(x)=\frac{1}{2\pi}\int_0^t\int_\mathbb{R}\widehat{\phi}\bigg(\frac{w}{T^{1/\beta}}\bigg)e^{ixw}e^{-u\psi_T(w)}dw,
\end{equation}
where $\psi_T$ is as in Lemma~\ref{clim}. Hence 
\begin{equation}
\mathbb{E}|P_t^T(x)| \leq \frac{1}{2\pi}\norm{\phi}_1\int_0^t\int_\mathbb{R}e^{-u\psi_T(w)}dwdu,
\end{equation}
which is bounded uniformly in $T \geq 1$ by Lemma~\ref{LE5'}. Using similar techniques one can write
\begin{eqnarray*}
\mathbb{E}P_t^T(x)^2 &=& \frac{2}{(2\pi)^2}\int_0^t\int_{u_1}^t \int_{\mathbb{R}^2}e^{ixw_1}\widehat{\phi}\bigg(\frac{w_1-w_2}{T^{1/\beta}}\bigg)\widehat{\phi}\bigg(\frac{w_2}{T^{1/\beta}}\bigg)\\
&& \:e^{-(u_2-u_1)\psi_T(w_2)}e^{-(u_1)\psi_T(w_1)}dw_1dw_2du_1du_2\\
&& \: \leq \frac{1}{2\pi^2}\Big(\int_0^t \int_\mathbb{R}e^{-u\psi_T(w)}dw du\Big)^2
\end{eqnarray*}
and argue similarly. This proves (ii).
\\

Obviously for any $t\geq 0, T\geq 1$, by replacing $\phi$ with its absolute value we get
\begin{equation*}
\Big|\int_\mathbb{R}\mathbb{E}P_t^T(x)dx\Big|\leq \int_\mathbb{R}\mathbb{E}\big|P_t^T(x)\big|dx \leq\norm{\phi}_1 t.
\end{equation*}
As for the second part of (ii), using the same techniques as in the proof of (i), we may write
\begin{multline}
\int_\mathbb{R} \mathbb{E}P_t^T(x)^2 dx = \\
 \int_\mathbb{R} \frac{1}{\pi}\int_0^t \int_{u_1}^t \int_\mathbb{R}\phi(-x)e^{ixw}\widehat{\phi}(w)T^{1/\beta}e^{-D_T(u_2-u_1)\psi(w)}dwdu_1du_2dx.
\end{multline}
The above can be bounded by 
\begin{equation}\label{oloc5}
\frac{1}{\pi}\norm{\phi}_1\int_0^t \int_{u_1}^t \int_\mathbb{R}\bigg|\widehat{\phi}\bigg(\frac{w}{T^{1/\beta}}\bigg)\bigg|e^{-(u_2-u_1)\psi_T(w)}dwdu_1du_2,
\end{equation}
which in turn is no bigger than
\begin{equation}
\frac{1}{\pi}\norm{\phi}_1^2\int_0^t \int_{u_1}^t \int_\mathbb{R}e^{-(u_2-u_1)\psi_T(w)}dwdu_1du_2.
\end{equation}
By Lemma~\ref{LE5'}, the last expression is bounded uniformly in $T\geq 1$. This proves (ii).
\\

Let us now turn to showing (iii). In order to escape notational complexity we will only consider the integrals over $\{x\in \mathbb{R}:x>K\}$. For $\{x\in \mathbb{R}:x< K\}$ it  is then enough to use the symmetry of $\eta$ and take $\widetilde{\phi}(x)=\phi(-x)$. First notice that after some simple manipulations we get by Fubini theorem
\begin{eqnarray*}
\int_K^\infty \mathbb{E}P_t^T(x)dx &=& \mathbb{E}\Big(\int_0^t\int_\mathbb{R}\mathbf{1}_{\{y<\eta_{D_Tu}-KT^{1/\beta}\}}\phi(y)dydu\Big)\\
&=& \int_0^t\int_\mathbb{R}\mathbb{P}(yT^{-1/\beta}<T^{-1/\beta}\eta_{D_Tu}-K)\phi(y)dy,
\end{eqnarray*}
which converges as $T\rightarrow \infty$, by dominated convergence theorem and~\eqref{LSCAL}, to

\begin{equation}
\int_{\mathbb{R}}\phi(y)dy \int_0^t\mathbb{P}(\xi_u>K)du = \int_{\mathbb{R}}\phi(y)dy \int_K^\infty \mathbb{E}L_t(x)dx.
\end{equation}
By choosing $K$ large enough to begin with and using Lemma~\ref{lc} from the Appendix we see that the first part of (iii) is true. Regarding its second part, write (again after some initial manipulations and using Fubini's theorem)
\begin{eqnarray}
\int_K^\infty \mathbb{E}P_t^T(x)^2dx &=& 2 \int_0^t \int_{u_1}^t \int_\mathbb{R} \mathbb{E}\Big(\mathbf{1}_{\{x>KT^{1/\beta} - \eta_{D_Tu_1}\}}\phi(-x) \\
&& \: \times T^{1/\beta}\phi(\eta_{D_Tu_2}-\eta_{D_Tu_1}-x)\Big)dxdu_1du_2.
\end{eqnarray}
Since $\eta$ is a L\'{e}vy process the above equals 
\begin{eqnarray} 
2 \int_0^t \int_{u_1}^t \int_\mathbb{R}&& \mathbb{P}\big(x>KT^{1/\beta} - \eta_{D_Tu_1}\big)\phi(-x)\\
&& \:\times T^{1/\beta}\mathbb{E}\big(\phi(\eta_{D_T(u_2-u_1)}-x)\big)dxdu_1du_2\nonumber\\
&=& 2 \int_0^t \int_{u_1}^t \int_\mathbb{R}\mathbb{P}\big(xT^{-1/\beta}>K - T^{1/\beta}\eta_{D_Tu_1}\big)\phi(-x) \label{gloc3}\\
&& \times \frac{1}{2\pi} \int_\mathbb{R}\widehat{\phi}\bigg(\frac{w}{T^{1/\beta}}\bigg)e^{-ixwT^{-1/\beta}}e^{-(u_2-u_1)\psi_T(w)}dw dxdu_1du_2.\nonumber
\end{eqnarray}
The integrand in~\eqref{gloc3} the above can be bounded by $|\phi(x)|\norm{\phi}_1e^{-(u_2-u_1)\psi_T(w)}$ which is integrable by Lemma~\ref{LE5'}. Therefore, 
\begin{equation}
\limsup_{T\rightarrow \infty} \int_K^\infty \mathbb{E}P_t^T(x)^2dx \leq c_1 \left(\int_\mathbb{R}\phi(y)dy\right)^2\int_K^\infty \mathbb{E}L_t(x)^2dx,
\end{equation}
for some constant $c_1$ independent of $T$ and $K$. In view of Lemma~\ref{lc}, this ends the proof of (iii). The proofs of (iv) and (v) are relatively straightforward consequences of Theorem 10.5.6 in~\cite{SPLRD} and we skip them. We also skip the proof of (vi)
\end{proof}

Given Lemma~\ref{GLEM} we will show that for any $K>0$ and $r \in (0,1)$ the integrand in
\begin{eqnarray}\label{gloc1}
\int_{|x|\leq K}\int_{|z|\geq r} &&\mathbb{E}\bigg(e^{i\theta z \widetilde{P}^T(x)}-\mathbf{1}_{\{|z|\leq 1\}}i\theta z\widetilde{P}^T(x) - 1\bigg)\nonumber\\
&& \:\times|z|^{-\alpha-1}\frac{L(T^{1/\alpha \beta}z)}{L(T^{1/\alpha \beta})}dzdx
\end{eqnarray}
can be bounded (uniformly in $T\geq 1$) by an integrable function and that, by dominated convergence, this is enough to prove the convergence of finite-dimensional distributions. Indeed (using inequalities $|e^{iw}-1|\leq |w|$ and $|e^{iw}-w-1|\leq \frac{1}{2}|w|^2$ for $w\in \mathbb{R}$), 
\begin{eqnarray*}
&&\int_{|x|> K}\int_{\mathbb{R}} \mathbb{E}\bigg(e^{i\theta z \widetilde{P}^T(x)}-\mathbf{1}_{\{|z|\leq 1\}}i\theta z\widetilde{P}^T(x) - 1\bigg)\\
&& \:\times|z|^{-\alpha-1}\frac{L(T^{1/\alpha \beta}z)}{L(T^{1/\alpha \beta})}dzdx\\
&\leq& \int_{|z|\leq 1}|z|^{1-\alpha}\frac{L(T^{1/\alpha \beta}z)}{L(T^{1/\alpha \beta})}dz\int_{|x|>K}\mathbb{E}|\widetilde{P}^T(x)|^2dx\\
&& \:+ \int_{|z|> 1} |z|^{-\alpha}dz \int_{|x|>K}\mathbb{E}|\widetilde{P}^T(x)|dx
\end{eqnarray*}
and
\begin{eqnarray*}
&&\int_{\mathbb{R}}\int_{|z|\leq r}\mathbb{E}\bigg(e^{i\theta z \widetilde{P}^T(x)}-\mathbf{1}_{\{|z|\leq 1\}}i\theta z\widetilde{P}^T(x) - 1\bigg)\\
&& \:\times|z|^{-\alpha-1}\frac{L(T^{1/\alpha \beta}z)}{L(T^{1/\alpha \beta})}dzdx\\
&\leq& \int_{\mathbb{R}}\mathbb{E}|\widetilde{P}_t^T(x)|^2dx \int_{|z|\leq r}|z|^{1-\alpha}\frac{L(T^{1/\alpha \beta}z)}{L(T^{1/\alpha \beta})}dz,
\end{eqnarray*}
which in view of Lemma~\ref{GLEM} can be made arbitrarily small for all $T$ sufficiently large by first choosing $K$ large enough and $r$ small enough. 

By Proposition 10.5.5 and Corollary 10.5.8 in~\cite{SPLRD}, for $r\in (0,1)$ fixed there exists $T_0\geq 1$ such that 
\begin{equation}
\left|\frac{L(T^{1/\alpha \beta}z)}{L(T^{1/\alpha \beta})}-1\right|\leq 1
\end{equation}
for all $z \in (r,1]$ and $T\geq T_0$ and for any $\delta>0$ there exists $T_1\geq 1$ such that for all $z\geq 1$ and $T\geq T_1$ we have 
\begin{equation}
\left|\frac{L(T^{1/\alpha \beta}z)}{L(T^{1/\alpha \beta})}\right|\leq (1+\delta)|z|^\delta.
\end{equation}
This impies that for $|z|>r$ and all $T$ large enough the function 
\begin{multline}
(x,y)\mapsto \mathbb{E}\Big(e^{i\theta z P^T(x)}-\mathbf{1}_{\{|z|\leq 1\}}i\theta zP^T(x) - 1\Big)|z|^{-\alpha-1}\frac{L(T^{1/\alpha \beta}z)}{L(T^{1/\alpha \beta})}
\end{multline}
can be bounded by the function
\begin{multline}
(x,y)\mapsto 2\mathbb{E}|P_t^T(x)|^2\mathbf{1}_{\{|z|\in (r,1]\}}|\theta|^2|z|^{1-\alpha}\\
 + \mathbb{E}|P_t^T(x)|\mathbf{1}_{\{|z|>1\}}|z|^{-\alpha}(1+\delta)|z|^{\delta}.
\end{multline}
Choosing $\delta$ small enough and again using Lemma~\ref{GLEM} we see that the above can be bounded by an integrable function, uniformly for all $T$ large enough.
\end{proof}

\begin{lem}
Assume that $\psi$ satisfies the conditions of Lemma~\ref{LTIGHT1}. Then the family of processes $\{(G^T_t)_{t\geq 0}:T\geq 1\}$ defined by~\eqref{GF1} is tight in $\mathcal{C}[0,\tau)$ for any $\tau>0$.
\end{lem}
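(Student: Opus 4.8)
The plan is to establish tightness in $\mathcal{C}[0,\tau]$ by means of Theorem 12.3 in \cite{BILL}, controlling the increments $G^T_t-G^T_s$ for $0\le s<t\le\tau$. Since the marginals of $G^T$ are $\alpha$-stable-like with $\alpha\in(1,2)$, they possess no finite moment of order $\ge\alpha$, so I would not estimate $\mathbb{E}|G^T_t-G^T_s|^k$ directly; instead I would use the fact that for a symmetric real random variable $X$
\[
\mathbb{P}(|X|\ge\lambda)\le C\lambda\int_0^{1/\lambda}\big(1-\Re\,\mathbb{E}e^{-i\theta X}\big)\,d\theta,
\]
applied to $X=G^T_t-G^T_s$, together with the explicit characteristic function of $G^T$ obtained in the proof of Theorem~\ref{THM1}. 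Since $G^T_0=0$, tightness at the origin is automatic, and it remains to produce a bound of the form $\mathbb{P}(|G^T_t-G^T_s|\ge\lambda)\le C\lambda^{-\alpha}(t-s)^{1+a}$ with $a>0$ and $C$ independent of $T$.

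Writing $\Psi_T(\theta)$ for the exponent in \eqref{GCHARS} with $\bar P^T(x)$ replaced by the increment $P^T_t(x)-P^T_s(x)$ (here $P^T$ is as in \eqref{P_def}), symmetry of the $\alpha$-stable kernel makes $\mathbb{E}e^{-i\theta X}=e^{\Psi_T(\theta)}$ real with $\Psi_T(\theta)\le0$, so $1-\Re\,\mathbb{E}e^{-i\theta X}=1-e^{\Psi_T(\theta)}\le|\Psi_T(\theta)|$. Bounding $|\Psi_T(\theta)|$ by the compensated integral, applying the kernel estimate $\big|\int_{|z|\ge r}(e^{izw}-i\mathbf{1}_{\{|z|<1\}}zw-1)|z|^{-1-\alpha}dz\big|\le c(\alpha)|w|^\alpha$ from Lemma~\ref{trick}, and handling the ratio $L(T^{1/\alpha\beta}z)/L(T^{1/\alpha\beta})$ uniformly in $T$ via the Potter-type bounds already used in Lemma~\ref{GLEM} (Corollary 10.5.8 in \cite{SPLRD}), I would obtain
\[
1-\Re\,\mathbb{E}e^{-i\theta(G^T_t-G^T_s)}\le d(\alpha)\,|\theta|^\alpha\,A_T(s,t),\qquad A_T(s,t):=\int_\mathbb{R}\mathbb{E}\big|P^T_t(x)-P^T_s(x)\big|^\alpha\,dx.
\]
Inserting this into the tail bound and integrating in $\theta$ yields $\mathbb{P}(|G^T_t-G^T_s|\ge\lambda)\le C\lambda^{-\alpha}A_T(s,t)$, so everything reduces to a super-linear estimate for $A_T(s,t)$ uniform in $T$.

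To control $A_T(s,t)$ I would interpolate rather than split crudely. Lyapunov's inequality gives, pointwise in $x$, $\mathbb{E}|X_x|^\alpha\le(\mathbb{E}|X_x|)^{2-\alpha}(\mathbb{E}X_x^2)^{\alpha-1}$ with $X_x=P^T_t(x)-P^T_s(x)$, and Hölder in $x$ with conjugate exponents $1/(2-\alpha)$ and $1/(\alpha-1)$ then gives $A_T(s,t)\le B_1(s,t)^{2-\alpha}B_2(s,t)^{\alpha-1}$, where $B_1=\int_\mathbb{R}\mathbb{E}|X_x|\,dx$ and $B_2=\int_\mathbb{R}\mathbb{E}X_x^2\,dx$. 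A Fubini computation (replacing $\phi$ by $|\phi|$ and using $T^{-1/\beta}D_T/F_T=1$) gives $B_1(s,t)\le\norm{\phi}_1(t-s)$, while the Plancherel representation used in Lemma~\ref{GLEM}, after the change of variables $w\mapsto T^{1/\beta}w$, yields $B_2(s,t)\le\tfrac{1}{\pi}\norm{\phi}_1^2\int_s^t\int_0^{t-v}\int_\mathbb{R}e^{-r\psi_T(w)}\,dw\,dr\,dv$; by Lemma~\ref{LTIGHT1} the inner double integral is at most $c_0(t-v)^\delta\le c_0(t-s)^\delta$ for some $\delta\in(0,1-1/\beta)$ uniformly in $T$, whence $B_2(s,t)\le c_0'(t-s)^{1+\delta}$ for $t-s<1$. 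Combining, $A_T(s,t)\le C(t-s)^{(2-\alpha)+(1+\delta)(\alpha-1)}=C(t-s)^{1+\delta(\alpha-1)}$ with exponent $1+\delta(\alpha-1)>1$.

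Thus $\mathbb{P}(|G^T_t-G^T_s|\ge\lambda)\le C\lambda^{-\alpha}(t-s)^{1+\delta(\alpha-1)}$ uniformly in $T$, and Theorem 12.3 in \cite{BILL} (with $F(t)=Ct$) gives tightness in $\mathcal{C}[0,\tau]$. The main obstacle is the super-linear second-moment bound $B_2\le c_0'(t-s)^{1+\delta}$: the first-moment term scales only like $(t-s)$, so naively splitting $|w|^\alpha\le|w|+|w|^2$ would leave exponent exactly $1$ and fail Billingsley's criterion. It is precisely the Lyapunov interpolation, together with Assumption (\textbf{C}) entering through Lemma~\ref{LTIGHT1} (which delivers a power strictly larger than $(t-s)$ uniformly in $T$ despite $\eta$ not being self-similar), that produces the needed exponent $>1$; the remaining work, namely the uniform control of the slowly varying ratio $L(T^{1/\alpha\beta}\cdot)/L(T^{1/\alpha\beta})$, is routine via Potter bounds.
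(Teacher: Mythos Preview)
Your approach is correct in outline but takes a genuinely different route from the paper. The paper proceeds by \emph{truncating} the weights: writing $G^T=G^{T,1}+G^{T,2}$ with $G^{T,1}$ collecting the summands with $|z_j|\le K_T:=KT^{1/\alpha\beta}$ and $G^{T,2}$ the remainder. Since $G^{T,1}$ has finite variance, one can bound $\mathbb{E}\big(G^{T,1}_t-G^{T,1}_s\big)^2$ directly; the $z$-integral contributes a factor $\le c\,K^{2-\alpha}$ uniformly in $T$ (via Theorem~10.5.6 in \cite{SPLRD}), and the $x$-integral is controlled by the Plancherel computation and Lemma~\ref{LTIGHT1}, yielding $\mathbb{E}\big(G^{T,1}_t-G^{T,1}_s\big)^2\le c\,(t-s)^{1+\delta}$. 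Tightness of $G^{T,1}$ then follows from Kolmogorov's criterion. The large-weight piece $G^{T,2}$ is disposed of by a first-moment bound on $\sup_{t\le\tau}|G^{T,2}_t|$, showing $\limsup_T\mathbb{P}(\sup_{t\le\tau}|G^{T,2}_t|>\delta)\to0$ as $K\to\infty$. Your argument instead avoids truncation, goes through the characteristic-function tail bound, and uses a Lyapunov--H\"older interpolation $A_T\le B_1^{2-\alpha}B_2^{\alpha-1}$ to squeeze out the super-linear exponent $1+\delta(\alpha-1)$. The interpolation is a nice device, and your bounds $B_1\le\norm{\phi}_1(t-s)$ and $B_2\le c(t-s)^{1+\delta}$ are exactly right (the latter is the content of the display around \eqref{yloc1}).

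There is one point you slide over: with the slowly varying factor $\rho_T(z)=L(T^{1/\alpha\beta}z)/L(T^{1/\alpha\beta})$ present in the $z$-integral, Lemma~\ref{trick} does not literally give $\le c(\alpha)|w|^\alpha$. The Potter bound yields $\rho_T(z)\le C(|z|^\varepsilon+|z|^{-\varepsilon})$ (for $T$ large and $|z|T^{1/\alpha\beta}$ bounded away from zero, with the very small-$z$ region handled separately by part~(v) of Lemma~\ref{GLEM}), and applying Lemma~\ref{trick} with exponents $\alpha\pm\varepsilon$ then produces $\le C(|w|^{\alpha-\varepsilon}+|w|^{\alpha+\varepsilon})$ rather than $|w|^\alpha$. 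This still suffices: choosing $\varepsilon<\alpha-1$ and running your Lyapunov--H\"older argument with exponents $\alpha\pm\varepsilon$ in place of $\alpha$ gives two Billingsley-type bounds, each with time-exponent $1+\delta(\alpha\pm\varepsilon-1)>1$, and their sum still implies tightness. So the gap is real but minor and easily patched; the paper's truncation sidesteps it by never needing the kernel estimate at all.
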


\begin{proof}
For any $K>0,T\geq 1$ put $K_T:=KT^{1/\alpha \beta}$ and let $G_t^T = G_t^{T,1}+G_t^{T,2}$ for any $t\geq 0$, with
\begin{equation}
G_t^{T,1}:=\frac{1}{N_T}\sum_j z_j\mathbf{1}_{\{K_T>|z_j|>\epsilon\}}\int_0^{Tt}\phi(C_Tx^j+\xi^j_u)du,
\end{equation}
and
\begin{equation}
G_t^{T,2}:=\frac{1}{N_T}\sum_j z_j\mathbf{1}_{\{|z_j|\geq K_T\}}\int_0^{Tt}\phi(C_Tx^j+\xi^j_u)du.
\end{equation}
We are going to show that the family of processes $(G^{T,1}_t)_{t\geq 0}$ is tight $\mathcal{C}[0,\tau]$ for any $\tau>0$ and that for any $\delta>0$
\begin{equation}\label{oloc6}
\lim_{K\rightarrow \infty} \limsup_{T\rightarrow \infty}\mathbb{P}(\sup_{t\in [0,\tau]}|G_t^{T,2}|>\delta)=0,
\end{equation}
which suffices to establish tightness.
\\

We now proceed to establish tightness for the family $(G_t^{T,1})_{t\geq 0}$. Notice that
\begin{multline}\label{oloc4}
\mathbb{E}\big(G_t^{T,1}-G_s^{T,1}\big)^2 \leq c_1 \int_\mathbb{R}\mathbb{E}\big(P_{t-s}^T(x)^2\big)dx\int_{|z|\leq K}|z|^{1-\alpha}\frac{L(T^{1/\alpha \beta}z)}{L(T^{1/\alpha \beta})}dz,
\end{multline}
for some finite constant $c_1$. After a change of variables $z:= zT^{1/\alpha \beta}$ and an application of Theorem 10.5.6 in~\cite{SPLRD}, we conclude that for all $T$ large enough, the integral over $\{|z|\leq K\}$ in~\eqref{oloc4} is bounded by $c_2K^{2-\alpha}$ for some finite constant $c_2$ depending only on $\alpha$. Furthermore, by~\eqref{oloc5}, the integral $\int_\mathbb{R}\mathbb{E}\big(P_{t-s}^T(x)^2\big)dx$ can be bounded by
\begin{eqnarray}
&&\frac{1}{\pi}\norm{\phi}_1\int_s^t \int_{u_1}^t \int_\mathbb{R}\bigg|\widehat{\phi}\bigg(\frac{w}{T^{1/\beta}}\bigg)\bigg|e^{-(u_2-u_1)\psi_T(w)}dwdu_1du_2\\
&&\leq \frac{1}{\pi}\norm{\phi}_1^2(t-s) \int_0^{t-s} \int_\mathbb{R}e^{-u_2\psi_T(w)}dwdu_2\label{yloc1}\\
\end{eqnarray}
Using Lemma~\ref{LTIGHT1}, we see that~\eqref{yloc1} is bounded by
\begin{equation*}
c_3(t-s)^{1+\delta}
\end{equation*}\
for a constant $c_3$ independent of $s,t$ and $T$ and some $\delta>0$.An application of Theorem 12.3 in~\cite{BILL} shows that the family $(G_t^{T,1})_{t\geq 0}$ is tight in $\mathcal{C}[0,\tau]$ for any $\tau>0$.
\\

Proceeding further, notice that for any $\delta, \tau>0$ (after a change of variables)
\begin{multline}
\mathbb{P}(\sup_{t\in [0,\tau]}|G_t^{T,2}|>\delta)\\
\leq \frac{1}{\delta}\int_{\mathbb{R}}\int_{|z|\geq K}\mathbb{E}\sup_{t\in[0,\tau]}\big|P_t^T(x)||z|^{-\alpha}\frac{L(T^{1/\alpha \beta}z)}{L(T^{1/\alpha \beta})}dzdx,
\end{multline}
which by part (iii) of Lemma~\ref{GLEM} (with $\phi$ replaced by its absolute value) and Corollary 10.5.8 in~\cite{SPLRD} can, for all $T$ large enough, be bounded by
\begin{equation}
c_4 \int_{|z|\geq K}|z|^{-\alpha}|z|^{\delta}dz,
\end{equation}
with $c_4$ being a constant independent of $T$ and $\delta>0$ can be arbitrarily small. This establishes~\eqref{oloc6} and finishes the proof of the lemma.

\end{proof}

\section{Proofs for Section~\ref{RS2}}\label{sec_sol}

\subsection{Proof of Theorem~\ref{THM2}}
Let us now consider the case in which $\phi \in L^1(\mathbb{R})$, $\int_\mathbb{R}\phi(y)=0$ and $\phi$ vanishes relatively quickly as $|y|\rightarrow \infty$. This would enable us to use Proposition~\ref{PROP2} in order to find a limit of the functional in~\eqref{FC}. 

Put 
\begin{equation}\label{R_def}
R_t^T(x):=\int_\mathbb{R}\phi(y)L_t(x+T^{-1/\beta}y)dy,
\end{equation}
for $x \in \mathbb{R}$, $t\geq 0$ and $T>0$. Before we prove Theorem~\ref{THM2} we will need a couple of auxiliary facts which are given in the Lemmas~\ref{sloc1} and~\ref{sloc2} below.

\begin{lem}\label{sloc1}
Assume that the conditions of Theorem~\ref{THM2} are satisfied. Then for every $T\geq 1$ and $\phi \in L^1(\mathbb{R})$ we have
\begin{equation}\label{w1}
I_1^T:=T^{\frac{\beta-1}{2\beta}}\int_\mathbb{R}\mathbb{E}|R_t^T(x)|dx < \infty,
\end{equation}
and
\begin{equation}\label{w2}
I_2^T:=T^{\frac{\beta-1}{\beta}}\int_\mathbb{R}\mathbb{E}|R_t^T(x)|^2 dx < \infty.
\end{equation}
If, in addition, we assume that $\int_\mathbb{R}|\phi(y)||y|^{\frac{\beta-1}{2}}dy <\infty$, then 
\begin{eqnarray}
\sup_{T\geq 1}I_1^T&<&\infty,\label{ur1}\\
\sup_{T\geq 1}I_2^T&<&\infty.\label{ur2}
\end{eqnarray}
\end{lem}
\begin{proof}

It is not hard to see, using Lemma~\ref{gbound} from the Appendix, that for any $z,x \in \mathbb{R}$
\begin{equation}\label{sb}
\mathbb{E}\big(L_t(x+z)-L_t(x)\big)^2 \leq 2 \big(\mathbb{E}L_t(x+z)+\mathbb{E}L_t(x)\big)\big(c_1(t,\beta)\wedge c_2(t,\beta)|z|^{\beta -1}\big),
\end{equation}
for some constants $c_1,c_2$ depending only on $t$ and $\beta$. By H\"{o}lder inequality, Lemma~\ref{gbound} and the fact that $\int_\mathbb{R}\phi(y)dy=0$ we get
\begin{eqnarray*}
I_1^T &\leq& \int_\mathbb{R} \int_\mathbb{R} T^{\frac{\beta-1}{2\beta}}|\phi(y)|\Bigg(2 \Big(\mathbb{E}L_t(x+yT^{-1/\beta})+\mathbb{E}L_t(x)\Big)\\
&& \:(c_1\wedge c_2|yT^{1/\beta}|^{\beta -1})\Bigg)^{\frac{1}{2}}dydx\\
&=& \int_\mathbb{R} \int_\mathbb{R} |\phi(y)|\Big(2 \big(\mathbb{E}L_t(x+yT^{-1/\beta})+\mathbb{E}L_t(x)\big)\\
&& \:(T^{\frac{\beta-1}{\beta}}c_1\wedge c_2|y|^{\beta -1})\Big)^{\frac{1}{2}}dydx\\
&\leq&\int_\mathbb{R} \int_\mathbb{R} |\phi(y)|\Big(2 \big(c_3\wedge c_4|x+yT^{-1/\beta}|^{-\beta-1}+c_3\wedge c_4|x|^{-\beta-1}\big)\\
&& \:(T^{\frac{\beta-1}{\beta}}c_1\wedge c_2|y|^{\beta -1})\Big)^{\frac{1}{2}}dxdy,\\
\end{eqnarray*}
where $c_3,c_4$ depend only on $t$ and $\beta$. Thus, $I_1^T$ is finite since $\beta+1>2$. Note that it is bounded uniformly in $T\geq 1$ for $\phi \in L^1(\mathbb{R})$ and $\int_\mathbb{R}|\phi(y)||y|^{\frac{\beta-1}{2}}dy <\infty$. \\

The proof of~\eqref{w2} is essentially the same once we use H\"{o}lder inequality. We will focus only on showing that  $\sup_{T\geq 1}I_2^T<\infty$ for $\phi$ satisfying $\int_\mathbb{R}|\phi(y)||y|^{(\beta-1)/2}dy < \infty$. Using~\eqref{sb}, we get
\begin{eqnarray*}\label{w3}
I_2^T &\leq& T^{\frac{\beta-1}{\beta}}\int_{\mathbb{R}}\Bigg(\int_\mathbb{R}|\phi(y)|\Big(\mathbb{E}\big(L_t(x+yT^{-1/\beta})-L_t(x)\big)^2\Big)^{1/2}dy\Bigg)^2dx.\\
&\leq& \int_{\mathbb{R}}\Bigg(\int_\mathbb{R}|\phi(y)|\Big(2\big(\mathbb{E}L_t(x+yT^{-1/\beta})+\mathbb{E}L_t(x)\big)c_2|y|^{\beta-1}\Big)^{1/2}dy\Bigg)^2dx
\end{eqnarray*}
Seeing that $\int_\mathbb{R}|\phi(y)|\Big(2\mathbb{E}L_t(x+yT^{-1/\beta})+2\mathbb{E}L_t(x)\Big)^{1/2}(c_2|y|^{\beta-1})^{1/2}dy$ can be bounded by $c_5\int_\mathbb{R}|\phi(y)||y|^{(\beta-1)/2}dy$ for some constant $c_5$ depending only on $\beta$ and $t$, we see (using the same arguments as in the proof of~\eqref{ur1}) that~\eqref{ur2} holds.
\end{proof}

\begin{lem}\label{sloc2}
Let $t\geq 0$ and assume that the integrable function $\phi$ satisfies the assumptions in the statement of Theorem~\ref{THM2}. Then, for any $\delta>0$ there exists $K_0>0$ and $T_0=T_0(K_0)$ such that for all $T\geq T_0, K\geq K_0$ we have
\begin{equation}\label{RTY}
\int_{\{|x|>K\}}T^{\frac{\beta-1}{2\beta}}\mathbb{E}\big|R_t^T(x)\big|dx < \delta.
\end{equation}
\end{lem}

\begin{proof}
Choose $K_0$ so that
\begin{equation}
\int_{\{|x|>K_0\}} c_2(t,\beta)\int_\mathbb{R}|\phi(y)||y|^{\frac{\beta-1}{2}}dy dx < \frac{\delta}{2},
\end{equation}
where $c_2(t,\beta)$ is the same as in~\eqref{sb}. Using H\"{o}lder inequality and inequality~\eqref{sb} we have
\begin{multline} 
\int_{\{|x|>K\}}T^{\frac{\beta-1}{2\beta}}\mathbb{E}\big|R_t^T(x)\big|dx \leq \\
\leq \int_{\{|x|>K\}} c_2(t,\beta)\int_\mathbb{R}\big(\mathbb{E}L_t(x)+\mathbb{E}L_t(x+T^{-1/\beta})\big)^{\frac{1}{2}})|\phi(y)||y|^{\frac{\beta-1}{2}}dy dx.
\end{multline}
The above can be bounded by $A+B$, with
\begin{eqnarray*}
A&\leq&  \int_{\{|x|>K_0\}} 2c_2(t,\beta)\int_\mathbb{R}\big(\mathbb{E}L_t(x)\big)^{\frac{1}{2}}|\phi(y)||y|^{\frac{\beta-1}{2}}dy dx,\\
B&\leq&  \int_{\{|x|>K_0\}} 2c_2(t,\beta)\int_\mathbb{R}\big(\mathbb{E}L_t(x+T^{-1/\beta})\big)^{\frac{1}{2}}|\phi(y)||y|^{\frac{\beta-1}{2}}dy dx.
\end{eqnarray*}
$B$ can be rewritten as 
\begin{equation}
2c_2(t,\beta)\int_\mathbb{R}\int_\mathbb{R}\mathbf{1}_{\{|x-T^{-1/\beta}|>K\}}|\phi(y)||y|^{\frac{\beta-1}{2}}\big(\mathbb{E}L_t(x)\big)^{\frac{1}{2}}dydx,
\end{equation}
which, by dominated convergence, converges to 
\begin{equation}
2c_2(t,\beta)\int_{\{|x|>K_0\}}\int_\mathbb{R}|\phi(y)||y|^{\frac{\beta-1}{2}}\big(\mathbb{E}L_t(x)\big)^{\frac{1}{2}}dydx,
\end{equation}
as $T\rightarrow \infty$. Choosing $T_0$ sufficiently large, we get the required inequality for all $K\geq K_0$.
\end{proof}

\begin{rem}
Using H\"{o}lder inequality one can easily show that the statement in Lemma~\ref{sloc2} remains true if in~\eqref{RTY} we replace 
\begin{equation*}
T^{\frac{\beta-1}{2\beta}}\mathbb{E}\big|R_t^T(x)\big|
\end{equation*}
by
\begin{equation*}
T^{\frac{\beta-1}{\beta}}\mathbb{E}R_t^T(x)^2
\end{equation*}
and the proof is then an easy consequence of Lemma~\ref{sloc2}.
\end{rem}

\begin{proof}[Proof of Theorem~\ref{THM2}]
Let $a_1,\ldots,a_m \in \mathbb{R}$ and $t_1,\ldots,t_m\geq 0$ for some $m\geq 1$. Then the characteristic function of $\sum_{j=1}^m a_j \widetilde{G}_{t_j}^T$ is given (after the usual change of variables) by
\begin{eqnarray}\label{sloc3}
&exp&\Bigg(\int_{\mathbb{R}^2}\mathbb{E}\left(e^{iT^{\frac{\beta-1}{2\beta}}z\sum_{j=1}^m a_j R_{t_j}^T(x)}-i\mathbf{1}_{\{|z|\leq 1\}}T^{\frac{\beta-1}{2\beta}}z \sum_{j=1}^m a_j R_{t_j}^T(x)-1\right)\nonumber\\
&& \:\mathbf{1}_{\{|z|>T^{-1/(\alpha \beta)}\}}|z|^{-1-\alpha}dzdx\Bigg),\quad \theta \in \mathbb{R}.
\end{eqnarray}
Notice first that by Lemma~\ref{sloc1}, in the limit as $T\rightarrow \infty$, we can forget about the term $\mathbf{1}_{\{|z|>T^{-1/(\alpha \beta)}\}}$. Without it the expression in~\eqref{sloc3} equals, by Lemma~\ref{trick} in the Appendix,
\begin{equation}\label{sloc4}
exp\left(-c_1(\alpha)\int_\mathbb{R}\mathbb{E}\Big|T^{\frac{\beta-1}{2\beta}}\sum_{j=1}^m a_j R_{t_j}^T(x)\Big|^\alpha dx \right),
\end{equation}
for some finite constant $c_1$ depending only on $\alpha$. Since for any random variable $Z$ and $\alpha \in (1,2)$, $\mathbb{E}|Z|^\alpha \leq \mathbb{E}|Z|+\mathbb{E}|Z|^2$, using Lemma~\ref{sloc2} with $K_0$ large enough we can make the integral
\begin{equation}
\int_{\{|x|>K_0\}}\mathbb{E}\Big|T^{\frac{\beta-1}{2\beta}}\sum_{j=1}^m a_j R_{t_j}^T(x)\Big|^\alpha dx
\end{equation}
arbitrarily small, uniformly for all $T$ large enough. By Proposition~\ref{PROP2} (or rather its proof which establishes convergence of all moments of the Process $(T^{\frac{\beta-1}{2\beta}}R_t^T(x))_{t\geq 0}$), the quantity
\begin{equation}
\mathbb{E}\Big|T^{\frac{\beta-1}{2\beta}}\sum_{j=1}^m a_j R_{t_j}^T(x)\Big|^\alpha
\end{equation}
converges, as $T\rightarrow \infty$ to
\begin{equation}
\mathbb{E}\Big|c_2(\phi,\beta)\sum_{j=1}^m a_j W_{t_j}(x)\Big|^\alpha
\end{equation}
for any $x \in \mathbb{R}$, with $c_2(\phi,\beta)=\sqrt{\int_\mathbb{R}|\widehat{\phi}(y)|^2|y|^{-\beta}dy}$. By dominated convergence theorem we conclude that~\eqref{sloc4} converges to
\begin{equation}\label{sloc5}
exp\left(-c_1(\alpha)\int_\mathbb{R}\mathbb{E}\Big|c_2(\phi,\beta)\sum_{j=1}^m a_j W_{t_j}(x)\Big|^\alpha dx \right).
\end{equation}
This finishes the proof of the convergence of finite-dimensional distributions.
\end{proof}

\subsection{Regular variation, heavy tails and the Proof of Theorem~\ref{THM3}}\label{FATtails}
First, let us concentrate on a very concrete choice of $\phi$ to show what happens when $\phi$ vanishes relatively slowly at infinity. We will then extend our discussion to the case of functions regularly varying at infinity.

\subsubsection{A simple example}

Suppose that
\begin{equation}
\phi(y):=|y|^{-\gamma}\mathbf{1}_{\{y\geq 1\}}-|y|^{-\gamma}\mathbf{1}_{\{y \leq -1\}},
\end{equation}
for $1<\gamma<1+\frac{\beta-1}{2}$. After a change of variables we get
\begin{equation}
R_t^T(x)=T^{\frac{1-\gamma}{\beta}}\int_{\frac{1}{T^{\frac{1}{\beta}}}}^\infty|y|^{-\gamma}\left(L_t(y+x)-L_t(-y+x)\right)dy.
\end{equation}
Put
\begin{equation}
Z^{T,\gamma}_t(x):=\int_{\frac{1}{T^{\frac{1}{\beta}}}}^\infty|y|^{-\gamma}\left(L_t(y+x)-L_t(-y+x)\right)dy.
\end{equation}
The above converges almost surely as $T\rightarrow \infty$ to
\begin{equation}\label{Z_def}
Z_t^\gamma(x):= \int_0^\infty |y|^{-\gamma}\left(L_t(y+x)-L_t(-y+x)\right)dy,
\end{equation}
which follows from dominated convergence theorem and Lemma~\ref{fatbound} below.

\begin{lem}\label{fatbound}
Let $Z$ be given by~\eqref{Z_def} and $1<\gamma<1+\frac{\beta-1}{2}$. For $\alpha \in [1,2]$ and $t\geq 0$
\begin{equation}\label{fatwell}
\int_\mathbb{R}\mathbb{E}\left|Z_t^\gamma(x)\right|^\alpha dx <\infty.
\end{equation}
\end{lem}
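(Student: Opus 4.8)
The plan is to reduce everything to the increment estimate~\eqref{sb} and the tail bound on $\mathbb{E}L_t$ supplied by Lemma~\ref{gbound}, after splitting the $\alpha$-th moment via the elementary inequality $|Z|^\alpha \le |Z| + |Z|^2$, valid for $\alpha\in[1,2]$. Thus it suffices to establish the two bounds $\int_\mathbb{R}\mathbb{E}|Z_t^\gamma(x)|\,dx<\infty$ and $\int_\mathbb{R}\mathbb{E}|Z_t^\gamma(x)|^2\,dx<\infty$ separately. The single analytic input I would use repeatedly is the pointwise consequence of Cauchy--Schwarz and~\eqref{sb}, namely
\[
\big(\mathbb{E}(L_t(x+y)-L_t(x-y))^2\big)^{1/2} \le \sqrt{2}\,\big(\mathbb{E}L_t(x+y)+\mathbb{E}L_t(x-y)\big)^{1/2}\big(c_1\wedge c_2|2y|^{\beta-1}\big)^{1/2},
\]
obtained by viewing $L_t(x+y)-L_t(x-y)$ as the increment of $L_t$ over an interval of length $2y$ (apply~\eqref{sb} with base point $x-y$ and displacement $2y$).

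For the first moment I would bound $\mathbb{E}|Z_t^\gamma(x)| \le \int_0^\infty |y|^{-\gamma}\,\mathbb{E}|L_t(x+y)-L_t(x-y)|\,dy$, use $\mathbb{E}|W|\le(\mathbb{E}W^2)^{1/2}$ together with the displayed estimate, and integrate in $x$ by Tonelli (everything is nonnegative). After $(a+b)^{1/2}\le a^{1/2}+b^{1/2}$ and translation invariance the $x$-integral factorises, leaving the constant $\int_\mathbb{R}(\mathbb{E}L_t(x))^{1/2}\,dx$, which is finite because Lemma~\ref{gbound} gives $\mathbb{E}L_t(x)\le c_1(t,\beta)\wedge c_2(t,\beta)|x|^{-1-\beta}$ and $(1+\beta)/2>1$. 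What survives is the radial integral
\[
\int_0^\infty |y|^{-\gamma}\big(c_1\wedge c_2|2y|^{\beta-1}\big)^{1/2}\,dy,
\]
which converges at $0$ exactly because $\gamma<1+\tfrac{\beta-1}{2}$ (the integrand behaves like $|y|^{(\beta-1)/2-\gamma}$) and at $\infty$ exactly because $\gamma>1$.

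For the second moment, squaring the first-moment bound is insufficient, and this interchange is the step I expect to be the main obstacle. Instead I would apply Minkowski's integral inequality to the $dy$-integral defining $Z_t^\gamma$, giving
\[
\Big(\int_\mathbb{R}\mathbb{E}|Z_t^\gamma(x)|^2\,dx\Big)^{1/2} \le \int_0^\infty |y|^{-\gamma}\Big(\int_\mathbb{R}\mathbb{E}\big(L_t(x+y)-L_t(x-y)\big)^2\,dx\Big)^{1/2}\,dy.
\]
The inner integral is then controlled by integrating~\eqref{sb} in $x$ and invoking the occupation-time identity $\int_\mathbb{R}\mathbb{E}L_t(x)\,dx=t$, which yields $\int_\mathbb{R}\mathbb{E}(L_t(x+y)-L_t(x-y))^2\,dx\le 4t\,(c_1\wedge c_2|2y|^{\beta-1})$. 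Substituting this reduces the whole expression to the same radial integral as above, whose convergence under $1<\gamma<1+\tfrac{\beta-1}{2}$ completes the argument. The only points requiring care are the legitimacy of Tonelli and Minkowski (automatic, since all integrands are nonnegative) and checking that the exponents in the radial integral line up with the hypotheses on $\gamma$, which they do precisely.
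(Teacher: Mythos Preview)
Your argument is correct and follows essentially the same scheme as the paper: reduce to the first and second moments via $|Z|^\alpha\le |Z|+|Z|^2$, control the first moment by Cauchy--Schwarz together with the increment bound~\eqref{sb} and Lemma~\ref{gbound}, and then handle the second moment by a further application of Minkowski. The only noteworthy difference is in the $I_2$ step: the paper bounds $\mathbb{E}|Z_t^\gamma(x)|^2\le g(x)^2$ with $g(x)=\int_0^\infty|y|^{-\gamma}(\mathbb{E}(L_t(x+y)-L_t(x-y))^2)^{1/2}\,dy$, observes that $g$ is bounded in $x$, and then uses $\int g^2\le\|g\|_\infty\int g$ to reduce to $I_1$; you instead apply Minkowski over $L^2(\mathbb{R}\times\Omega')$ and invoke the occupation-time identity $\int_\mathbb{R}\mathbb{E}L_t(x)\,dx=t$ directly, which is a little cleaner and avoids the separate boundedness check.
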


\begin{proof}
We will show that 
\begin{eqnarray*}
I_1&=&\int_\mathbb{R}\mathbb{E}|Z_t^\gamma(x)|dx <\infty,\\
I_2&=&\int_\mathbb{R}\mathbb{E}|Z_t^\gamma(x)|^2 dx <\infty,
\end{eqnarray*}
which imply~\eqref{fatwell}. Using H\"{o}lder inequality we see that
\begin{equation}
I_1 \leq \int_\mathbb{R}\int_0^\infty |y|^{-\gamma}\left(\mathbb{E}\left(L_t(x+y)-L_t(x-y)\right)^2\right)^{\frac{1}{2}}dydx.
\end{equation}
Observe that (by Lemma~\ref{gbound} in the appendix)
\begin{eqnarray}\label{mom}
\mathbb{E}\left(L_t(x+y)-L_t(x-y)\right)^2&=& 2\int_0^t\int_0^{t-u_1}\Big(p_{u_1}(x+y)+p_{u_1}(x-y)\Big)\nonumber \\
&& \:\Big(p_{u_2}(0)-p_{u_2}(2y)\Big)du_2du_1\nonumber \\
&\leq& 2\Big(\mathbb{E}L_t(x+y)+\mathbb{E}L_t(x-y)\Big)(c_1 \wedge c_2|y|^{\beta-1}),\nonumber
\end{eqnarray}
where the inequality follows from~\eqref{pbound} for some constants $c_1$ and $c_2$ depending only on $\beta$ and $t$. Therefore (using Lemma~\ref{gbound})
\begin{eqnarray*}
I_1&\leq& \int_\mathbb{R}\int_0^\infty |y|^{-\gamma}\left((c_1\wedge c_2|x+y|^{-\frac{\beta+1}{2}})+(c_1\wedge c_2|x-y|^{-\frac{\beta+1}{2}})\right)\\
&& \:(c_3\wedge c_4|y|^{\frac{\beta-1}{2}})dydx,
\end{eqnarray*}
which is finite since $\frac{1+\beta}{2}>1$ and $1<\gamma<1+\frac{\beta-1}{2}$. As for $I_2$, notice that by H\"{o}lder inequality
\begin{eqnarray*}
I_2&=&\int_\mathbb{R}\int_0^\infty \int_0^\infty|y_1|^{-\gamma}|y_2|^{-\gamma}\mathbb{E}\Big((L_t(x+y_1)-L_t(x-y_1))\\
&& \:(L_t(x+y_2)-L_t(x-y_2))\Big)dy_1 dy_2 dx \\
&\leq&
\int_\mathbb{R}\Bigg(\int_0^\infty |y|^{-\gamma}\left(\mathbb{E}\left(L_t(x+y)-L_t(x-y)\right)^2\right)^{\frac{1}{2}}dy\Bigg)^2 dx
\end{eqnarray*}
Seeing that the function
\begin{equation}
x\mapsto \int_0^\infty |y|^{-\gamma}\left(\mathbb{E}\left(L_t(x+y)-L_t(x-y)\right)^2\right)^{\frac{1}{2}}dy
\end{equation}
is bounded uniformly in $x \in \mathbb{R}$, we conclude that since $I_1$ is finite, $I_2$ is finite as well.
\end{proof}

The process $(Z_t^\gamma(x))_{t\geq 0}$ is continuous and has a non-zero mean as long as $x\neq 0$. Using H\"{o}lder inequality it is easy to see that the process $Z^\gamma$ has all moments. If we choose 
\begin{equation}
F_T=T^{1+1/(\alpha \beta)-\gamma/\beta},
\end{equation}
then, we will see (in the more general setting of Theorem~\ref{THM3}) that the finite dimensional distributions of the process $(G_t^T)_{t\geq 0}$ in~\eqref{FC} converge to the finite dimensional distributions of the process which has the integral representation
\begin{equation}\label{stab}
V_t=\int_{\mathbb{R}\times \Omega'}c(\alpha)^{\frac{1}{\alpha}}Z_t(x,\omega')M(dx,d\omega'),
\end{equation}
where $M$ is a symmetric $\alpha$-stable random measure on $\mathbb{R}\times \Omega'$  with intensity $\lambda_1\otimes \mathbb{P'}$ and
\begin{equation}
Z_t(x,\omega')=\int_0^\infty |y|^{-\gamma}\left(L_t(x+y,\omega')-L_t(x-y,\omega')\right)dy,\quad x\in \mathbb{R}.
\end{equation}

\begin{lem}\label{Vprop}
The process $V$ is $H$-sssi with $H=1+\frac{1}{\alpha \beta}-\frac{\gamma}{\beta}$. 
\end{lem}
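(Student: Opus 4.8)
The plan is to exploit the fact that $V$ is a symmetric $\alpha$-stable process, so that its finite-dimensional distributions are completely encoded in the kernels $Z_t$ through the standard integral formula for $S\alpha S$ random measures. Writing $\mathbb{E}'$ for expectation on $(\Omega',\mathcal F',\mathbb P')$, for $\theta_1,\dots,\theta_n\in\mathbb R$ and $t_1,\dots,t_n\ge 0$ one has
\begin{equation}
\mathbb{E}\exp\Big(i\sum_{j=1}^n\theta_j V_{t_j}\Big)=\exp\big(-c(\alpha)\,J(\boldsymbol\theta;\mathbf t)\big),\qquad J(\boldsymbol\theta;\mathbf t):=\int_{\mathbb R}\mathbb{E}'\Big|\sum_{j=1}^n\theta_j Z_{t_j}(x)\Big|^\alpha dx,
\end{equation}
where all these integrals are finite by Lemma~\ref{fatbound}. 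Consequently it suffices to establish two identities for $J$: the scaling identity $J(\boldsymbol\theta;a\mathbf t)=a^{\alpha H}J(\boldsymbol\theta;\mathbf t)$ for every $a>0$ (which gives self-similarity), and the invariance of $J$ under passing from $(Z_{t_j})$ to the increment kernels $(Z_{t_j+h}-Z_h)$ for every $h>0$ (which gives stationary increments).

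For self-similarity I would start from the scaling property of the local time of the symmetric $\beta$-stable motion, namely that jointly in $(t,x)$
\begin{equation}
\big(L_{at}(a^{1/\beta}x)\big)_{t\ge 0,\,x\in\mathbb R}\overset{d}{=}\big(a^{1-1/\beta}L_t(x)\big)_{t\ge 0,\,x\in\mathbb R},
\end{equation}
which follows from $(\xi_{as})_s\overset{d}{=}(a^{1/\beta}\xi_s)_s$ and the occupation-density interpretation of $L$. Substituting $y=a^{1/\beta}u$ in the definition of $Z$ and invoking this relation gives, jointly in $(t,x)$,
\begin{equation}
Z_{at}(a^{1/\beta}x)=a^{(1-\gamma)/\beta}\!\int_0^\infty u^{-\gamma}\big(L_{at}(a^{1/\beta}(x+u))-L_{at}(a^{1/\beta}(x-u))\big)du\overset{d}{=}a^{1-\gamma/\beta}Z_t(x).
\end{equation}
Changing variable $x\mapsto a^{1/\beta}x$ in the outer integral of $J(\boldsymbol\theta;a\mathbf t)$ produces a factor $a^{1/\beta}$, and applying the displayed distributional equality pointwise in $x$ (only the marginal in $x$ is needed to evaluate $\mathbb E'|\cdot|^\alpha$) produces a factor $a^{\alpha(1-\gamma/\beta)}$. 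The total power of $a$ is $\tfrac1\beta+\alpha\big(1-\tfrac\gamma\beta\big)=\alpha\big(1+\tfrac1{\alpha\beta}-\tfrac\gamma\beta\big)=\alpha H$, which is exactly what is needed, since the substitution $\theta_j\mapsto a^H\theta_j$ also multiplies $J$ by $a^{\alpha H}$.

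For stationary increments the key step is to identify the increment kernel. By the Markov property and stationarity of increments of $\xi$, the occupation density of $(\xi_{h+s})_{s\in[0,t]}$ equals that of a fresh independent $\beta$-stable motion shifted by the random starting point $\xi_h$; concretely
\begin{equation}
L_{t+h}(z)-L_h(z)=\widetilde L_t(z-\xi_h),
\end{equation}
where $\widetilde L$ is the local time of a $\beta$-stable motion and $\widetilde L\overset{d}{=}L$. Hence $Z_{t+h}(x)-Z_h(x)=\widetilde Z_t(x-\xi_h)$, with $\widetilde Z$ built from $\widetilde L$. Writing, via Fubini, the value of $J$ for the increment kernels as $\mathbb E'\int_{\mathbb R}\big|\sum_j\theta_j\widetilde Z_{t_j}(x-\xi_h)\big|^\alpha dx$, the translation invariance of Lebesgue measure removes the shift $\xi_h$ inside the $x$-integral, after which $\widetilde L\overset{d}{=}L$ yields exactly $J(\boldsymbol\theta;\mathbf t)$, giving the required invariance. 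The routine verifications (the $S\alpha S$ characteristic functional, the change of variables, and finiteness from Lemma~\ref{fatbound}) present no difficulty; the main obstacle is the careful justification of the two local-time identities, and in particular the representation $L_{t+h}(z)-L_h(z)=\widetilde L_t(z-\xi_h)$ of the increment of the occupation density as a spatially shifted, independent copy, since everything else reduces to elementary manipulations of the integral $J$.
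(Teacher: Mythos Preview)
Your proof is correct and follows essentially the same route as the paper's own argument: both reduce the question to the functional $J(\boldsymbol\theta;\mathbf t)=\int_\mathbb{R}\mathbb{E}'\big|\sum_j\theta_jZ_{t_j}(x)\big|^\alpha\,dx$, obtain self-similarity by combining the local-time scaling $L_{at}(a^{1/\beta}\,\cdot)\overset{d}{=}a^{1-1/\beta}L_t(\cdot)$ (the paper's Remark~\ref{leqi}) with the changes of variable in $x$ and $y$, and obtain stationary increments from the additivity relation $L_{t+h}(z)=L_h(z)+L_t^{\xi_h}(z)$ together with translation invariance of Lebesgue measure and Fubini. Your write-up of the increment step is more explicit than the paper's one-line remark, but the underlying argument is identical.
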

\begin{proof}
Let $a_1,\ldots,a_m \in \mathbb{R}$ and $0\leq t_1\leq \ldots \leq t_m <\infty$. Take $c>0$ and notice that using Remark~\ref{leqi}
\begin{eqnarray*}
\mathbb{E}\big( exp\Big(i\sum_{j=1}^m a_j G_{t_j}\Big)\big)&=&exp\big(-\int_\mathbb{R}\mathbb{E}\Big|\sum_{j=1}^m a_j Z_{ct_j}(x)\Big|^\alpha dx\big)\\
&=& exp\Big(-c^{1/\beta}\int_\mathbb{R}\mathbb{E}\Big|\sum_{j=1}^m a_j Z_{ct_j}(c^{1/\beta}x)\Big|^\alpha dx\Big)\\
&=& exp\Big(-c^{1/\beta}\int_\mathbb{R}\mathbb{E}\Big|\sum_{j=1}^m a_j \int_0^\infty|y|^{-\gamma}\Big(L_{ct_j}(c^{1/\beta}x+y)\\
&& \:-L_{ct_j}(c^{1/\beta}x-y)\Big)\Big|^\alpha dy dx\Big)\\
&=& exp\Big(-\mathbb{E}\Big|\sum_{j=1}^m a_j \int_0^\infty|y|^{-\gamma}\Big(L_{ct_j}(c^{1/\beta}x+c^{1/\beta}y)\\
&& \:-L_{ct_j}(c^{1/\beta}x-c^{1/\beta}y)\Big)\Big|^\alpha dx  c^{1/\beta}c^{-(\gamma \alpha)/\beta}c^{\alpha/\beta}\Big)\\
&=&\mathbb{E}\left( exp(ic^H\sum_{j=1}^m a_j G_{t_j})\right)
\end{eqnarray*}
where the last inequality follows from~\eqref{leqi}. Hence $G$ is self-similar with Hurst coefficient $H=1+\frac{1}{\alpha \beta}-\frac{\gamma}{\beta}$. The stationarity of increments follows immediately once we notice that $L_{t+s}(z):=L_{t+s}^0(z)=L_s(z)+L_t^{\xi_s}(z)$ for $s,t\geq 0$ and $z \in \mathbb{R}$ and use Fubini theorem.
\end{proof}
\begin{rem}
In this setting $H$ can take any value from the interval $(\frac{1}{2},1)$.
\end{rem}

\subsubsection{Proof of Theorem~\ref{THM3}}

\begin{proof}[Proof of Theorem~\ref{THM3} in the case $\gamma_1<\gamma_2$]
Without loss of generality we may assume that $f_1$ and $f_2$ are nonnegative. Take any $a_1,\ldots,a_m\in\mathbb{R}$ and $t_1,\ldots,t_m\geq 0$ and recall that the normalization in this case is given by 
\begin{equation*}
F_T=g_1(T^{1/\beta})T^{1+1/(\alpha \beta)-\gamma/\beta}.
\end{equation*}
Then for $G^T$ as in~\eqref{FC} we have (after a change of variables and using symmetry)
\begin{eqnarray}\label{cloc1}
\mathbb{E}exp\Big(\sum_{j=1}^m a_j G_{t_j}^T\Big) &=&exp\Bigg(\int_\mathbb{R}\int_\mathbb{R}\mathbb{E}\Big(e^{iM^T(x)}-1-\mathbf{1}_{\{|z|\leq 1\}}zM^T(x)\Big)\nonumber\\
&& \:\times\mathbf{1}_{\{|z|>T^{-1/\alpha \beta}\}}|z|^{-1-\alpha}dzdx\Bigg),
\end{eqnarray}
where
\begin{multline}
M^T(x)=\sum_{j=1}^m a_j \Big(g(T^{1/\beta})^{-1}\int_0^\infty g(T^{1/\beta}y)|y|^{-\gamma_1}\big(L_{t_j}(x+y)-L_{t_j}(x)\big)dy\\
-T^{\frac{-\gamma_2+\gamma_1}{\beta}}g_1(T^{1/\beta})^{-1}\int_0^\infty|y|^{-\gamma_2}g_2(T^{1/\beta}y)\big(L_{t_j}(x-y)-L_{t_j}(x)\big)dy \Big).
\end{multline}
By Lemma~\ref{fatbound}, in the limit of~\eqref{cloc1} we can forget about the term \\
$\mathbf{1}_{\{|z|>T^{-1/\alpha \beta}\}}$ and by Lemma~\ref{trick} in the Appendix we only need to show the convergence of
\begin{equation}\label{cloc2}
exp\Big(\int_\mathbb{R}\mathbb{E}|M^T(x)|^\alpha dx \Big).
\end{equation}
Very similarly as in the proof of Theorem~\ref{THM2} on can show that 
\begin{equation}
\lim_{K\rightarrow \infty} \limsup_{T\rightarrow \infty}\int_{|x|>K}\mathbb{E}|M^T(x)|^\alpha dx = 0,
\end{equation} 
and we will skip the proof. Thus, it remains to show that for any $x\in \mathbb{R}$, $\mathbb{E}|M^T(x)|^\alpha$, converges, up to multiplicative constant to
\begin{equation}
\mathbb{E}\Big|\sum_{j=1}^m a_j \widetilde{Z}_{t_j}\Big|^\alpha,
\end{equation}
where $\widetilde{Z}$ is defined by~\eqref{Z2}. For simplicity let us assume that $m=1$, $a_1=1$ and $t_1=t$. We can write $M^T(x) = M^{T,1}(x) - M^{T,2}(x)$ with
\begin{eqnarray}
M^{T,1}(x)&=&\int_0^\infty \frac{g(T^{1/\beta}y)}{g(T^{1/\beta})}|y|^{-\gamma_1}\big(L_t(x+y)-L_t(x)\big)dy,\label{cloc3}\\
M^{T,1}(x)&=&T^{\frac{-\gamma_2+\gamma_1}{\beta}}g_1(T^{1/\beta})^{-1}\int_0^\infty|y|^{-\gamma_2}g_2(T^{1/\beta}y)\nonumber\\
&& \: \times\big(L_t(x-y)-L_t(x)\big)dy.\label{cloc4}
\end{eqnarray}
Fix $r\in (0,1)$. Similarly as in, e.g., Lemma~\ref{clim}, on can show, using Lemma~\ref{fatbound}, that
\begin{multline}
\lim_{r\rightarrow 0_+}\limsup_{T\rightarrow \infty}\mathbb{E}\Big|g(T^{1/\beta})^{-1}\int_0^r g(T^{1/\beta}y)|y|^{-\gamma_1}\big(L_t(x+y)-L_t(x)\big)dy\Big|=0
\end{multline}
We can show that the equivalent holds for $M^{T,2}(x)$. Now, as we consider only $y$ bounded away from zero, we can use Theorem 10.5.5 and Corollary 10.5.8 in~\cite{SPLRD} to bound the integrals in $M^{T,1}(x)$ and $M^{T,2}(x)$ uniformly for all $T$ large enough and use dominated convergence to show that
\begin{equation}
\mathbb{E}|M^T(x)|^\alpha \rightarrow \mathbb{E}\Big|\widetilde{Z}_{t}\Big|^\alpha
\end{equation}
as $T\rightarrow\infty$. This finishes the proof.

The proofs of other cases of Theorem~\eqref{THM3} are very much like the one above and we skip them for the sake of brevity.
\end{proof}

\begin{appendices}

\section{Preliminary Properties of Stable Local Times}

Let $(\xi_t)_{t \geq 0}$ be a symmetric $\beta$-stable Levy process with $\beta \in (1,2)$. It is well known that in this case $\xi$ admits a jointly continuous local time. Denote it by $L_t(x)$, $x\in \mathbb{R}, t\geq 0$. Here we provide number of facts that are used throughout this paper. their proofs are relatively straightforward and we skip them and provide the necessary references.

\begin{lem}\label{lmoments}
Let $(L_t(x))_{t\geq 0}$ be a local time at $x \in \mathbb{R}$ of a symmetric $\beta$-stable process (denoted by $\xi$) with $\beta \in (1,2)$. Then for any $n \in \mathbb{N}$ and $t>0$
\begin{multline}
\mathbb{E}\left(L_t(x)\right)^n=\\
n!\frac{1}{(2\pi)^n}\int_0^{t}\ldots \int_{u_{n-1}}^t\int_{\mathbb{R}^n}e^{ixz_1}e^{-(u_n-u_{n-1})|z_n|^\beta}e^{-(u_{n-1}-u_{n-2})|z_{n-1}|^\beta}\ldots e^{-u_1|z_1|^\beta}\\
dz_1\ldots dz_n d{u_1}\ldots du_n.
\end{multline}

We also have
\begin{multline}
\mathbb{E}\left(L_t(x)\right)^n=\\
n!\int_0^t \int_{u_1}^t \ldots \int_{u_{n-1}}^t p_{u_n-u_{n-1}}(0)\ldots p_{u_2-u_1}(0)p_{u_1}(x)du_n\ldots du_1,
\end{multline}
and
\begin{multline}\label{mmoments}
\mathbb{E}L_t(x_1)\ldots L_t(x_n)= \sum_{\pi \in \Pi(n)}\int_{\Delta^n_T}p_{u_n-u_{n-1}}(x_{\pi_n}-x_{\pi_{n-1}})\ldots\\
\ldots p_{u_2-u_1}(x_{\pi_2}-x_{\pi_{2}})p_{u_1}(x_{\pi_1})du_n\ldots du_1.
\end{multline}
The proof is very similar to the proof of Lemma 1 in~\cite{ROS} and we skip it.
\end{lem}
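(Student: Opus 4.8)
The plan is to derive all three identities from a single computation based on the approximate occupation-density representation of the local time together with the Markov property of $\xi$. First I would use the transition density itself as an approximate identity, writing $L_t(x)=\lim_{\epsilon\to 0}\int_0^t p_\epsilon(\xi_s-x)\,ds$, where the convergence holds in $L^n(\mathbb{P})$ for every $n$ because $\xi$ admits a jointly continuous local time with finite moments (the finiteness is exactly what the simplex integrals below confirm). Raising the approximation to the $n$-th power and applying Fubini's theorem gives
$$\mathbb{E}\Big(\int_0^t p_\epsilon(\xi_s-x)\,ds\Big)^n=\int_{[0,t]^n}\mathbb{E}\prod_{j=1}^n p_\epsilon(\xi_{s_j}-x)\,ds_1\cdots ds_n,$$
and by the symmetry of the integrand under permutations of the $s_j$ I would restrict to the ordered simplex $\{0<u_1<\cdots<u_n<t\}$ at the cost of a factor $n!$.

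Next I would apply the Markov property, i.e.\ the independence and stationarity of the increments of $\xi$, to factor the joint expectation over the ordered times:
$$\mathbb{E}\prod_{j=1}^n p_\epsilon(\xi_{u_j}-x)=\int_{\mathbb{R}^n}\prod_{j=1}^n p_\epsilon(y_j-x)\,p_{u_1}(y_1)\prod_{j=2}^n p_{u_j-u_{j-1}}(y_j-y_{j-1})\,dy_1\cdots dy_n.$$
Letting $\epsilon\to 0$, each factor $p_\epsilon(y_j-x)$ acts as an approximate $\delta_x$, collapsing every $y_j$ to $x$ and yielding $p_{u_1}(x)\prod_{j=2}^n p_{u_j-u_{j-1}}(0)$. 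This produces the second displayed formula. The first displayed formula then follows by inserting the Fourier inversions $p_{u_1}(x)=\frac{1}{2\pi}\int_{\mathbb{R}}e^{ixz_1}e^{-u_1|z_1|^\beta}\,dz_1$ and $p_{u_j-u_{j-1}}(0)=\frac{1}{2\pi}\int_{\mathbb{R}}e^{-(u_j-u_{j-1})|z_j|^\beta}\,dz_j$ (using $\widehat{p_u}(z)=e^{-u|z|^\beta}$ and the symmetry of $p_u$) and interchanging the $z$- and $u$-integrations.

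For the mixed-moment identity~\eqref{mmoments} I would run the same argument with $n$ distinct points $x_1,\dots,x_n$ and mollifiers $p_\epsilon(\,\cdot\,-x_j)$. The only difference is that after restricting to the ordered simplex one can no longer factor out a clean $n!$: the point visited at the $k$-th ordered time may be any of the $x_j$, so the symmetrization produces a sum over all bijections $\pi\in\Pi(n)$ assigning points to ordered times. Collapsing the spatial integrals in the limit $\epsilon\to0$ then gives $p_{u_1}(x_{\pi_1})\prod_{j\ge 2}p_{u_j-u_{j-1}}(x_{\pi_j}-x_{\pi_{j-1}})$, which is precisely the summand in~\eqref{mmoments}.

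The main obstacle is justifying the two limit interchanges rigorously: the $L^n$-convergence of the mollified occupation integrals to $L_t(x)$, and the passage $\epsilon\to 0$ under the simplex integral. Both reduce to a uniform integrability estimate that rests on the finiteness of the limiting simplex integral, and the crucial point is the behaviour of $p_u(0)$ near $u=0$. By self-similarity $p_u(0)=p_1(0)\,u^{-1/\beta}$, and since $\beta\in(1,2)$ gives $1/\beta<1$, the singularities along the coincidence set $\{u_j=u_{j-1}\}$ are integrable over the simplex; hence all the integrals above are finite and dominated convergence applies. I would invoke the jointly continuous version of $L$ (whose existence for $\beta\in(1,2)$ is standard) and refer to the analogous argument in the proof of Lemma~1 of~\cite{ROS} for the details of these convergences.
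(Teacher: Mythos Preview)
Your proposal is correct and follows exactly the standard route that the paper defers to: the paper does not give its own proof but simply states that it is ``very similar to the proof of Lemma~1 in~\cite{ROS}'' and skips it, and Rosen's argument is precisely the mollification--Markov-property--Fourier-inversion scheme you outline. Your handling of the integrability issue via $p_u(0)=p_1(0)u^{-1/\beta}$ with $1/\beta<1$ is the right justification for the dominated convergence steps.
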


We will need a lemma about the asymptotic behavior of $\mathbb{E}L_t(x)$ as $|x|\rightarrow \infty$. The proof is straightforward so we skip it.
\begin{lem}\label{gbound}
For any $t>0$ there exist constants $C,C'$ depending only on $t$ and $\beta$ such that 
\begin{equation}
\mathbb{E}L_t(x) \leq C\wedge (C'|x|^{-\beta-1}).
\end{equation}
\end{lem}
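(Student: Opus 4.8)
The plan is to reduce the whole statement to the standard pointwise estimate for the transition density of $\xi$ together with its scaling. Writing $p_u$ for the density of $\xi_u$, the $n=1$ case of Lemma~\ref{lmoments} gives the occupation-density identity
\begin{equation*}
\mathbb{E}L_t(x)=\int_0^t p_u(x)\,du,
\end{equation*}
so it suffices to control this single integral. I would invoke two well-known facts about symmetric $\beta$-stable densities. First, $p_1$ is bounded and attains its maximum at the origin, so by self-similarity $p_u(y)=u^{-1/\beta}p_1(u^{-1/\beta}y)$ we have $p_u(y)\le p_u(0)=p_1(0)u^{-1/\beta}$ for every $y$. Second, the polynomial tail $p_1(y)\sim c_\beta|y|^{-1-\beta}$ as $|y|\to\infty$ furnishes a constant $c_\beta$ with $p_1(y)\le c_\beta|y|^{-1-\beta}$ whenever $|y|\ge 1$.

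For the uniform bound I would simply integrate the first estimate:
\begin{equation*}
\mathbb{E}L_t(x)\le\int_0^t p_1(0)u^{-1/\beta}\,du=p_1(0)\frac{\beta}{\beta-1}\,t^{1-1/\beta}=:C,
\end{equation*}
which is finite because $\beta\in(1,2)$ and depends only on $t$ and $\beta$. This produces the constant $C$ in the minimum.

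For the decay bound I would first restrict to $|x|\ge t^{1/\beta}$. Then for every $u\in(0,t)$ one has $u^{-1/\beta}|x|\ge t^{-1/\beta}|x|\ge 1$, so the tail estimate is applicable to $p_1(u^{-1/\beta}x)$, and scaling gives
\begin{equation*}
p_u(x)=u^{-1/\beta}p_1(u^{-1/\beta}x)\le c_\beta\,u^{-1/\beta}\,\bigl(u^{-1/\beta}|x|\bigr)^{-1-\beta}=c_\beta\,u\,|x|^{-1-\beta}.
\end{equation*}
Integrating over $(0,t)$ yields $\mathbb{E}L_t(x)\le \tfrac12 c_\beta t^2|x|^{-1-\beta}$, a bound of the form $C'|x|^{-1-\beta}$ with $C'$ depending only on $t$ and $\beta$. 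For the remaining range $|x|<t^{1/\beta}$ the expression $C'|x|^{-1-\beta}$ already exceeds $C$ once $C'$ is taken large enough (it suffices that $C'\ge C\,t^{(1+\beta)/\beta}$, since then $C'|x|^{-1-\beta}\ge C' t^{-(1+\beta)/\beta}\ge C$), so there the uniform bound $C$ is itself dominated by $C'|x|^{-1-\beta}$. Combining the two regimes gives $\mathbb{E}L_t(x)\le C\wedge(C'|x|^{-1-\beta})$ on all of $\mathbb{R}$.

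The only genuine subtlety, rather than an obstacle, is ensuring that the asymptotic tail bound for $p_1$ is used only where it is valid: this is precisely why I would split at $|x|=t^{1/\beta}$, which guarantees $u^{-1/\beta}|x|\ge 1$ uniformly in $u\in(0,t)$ so that $p_1$ is always evaluated in its polynomial-tail regime. Everything else is a one-line integration and a choice of constants, consistent with the remark in the excerpt that the proof is straightforward.
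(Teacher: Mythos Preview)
Your proof is correct. The paper does not actually supply a proof of this lemma; it states explicitly that ``the proof is straightforward so we skip it,'' and your argument---using the occupation-density identity $\mathbb{E}L_t(x)=\int_0^t p_u(x)\,du$, the scaling $p_u(y)=u^{-1/\beta}p_1(u^{-1/\beta}y)$, and the standard tail bound $p_1(y)\le c_\beta|y|^{-1-\beta}$ for $|y|\ge 1$---is precisely the kind of straightforward verification the paper has in mind.
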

We also have the following lemma.

\begin{lem}
For $x\in \mathbb{R}$, $x\neq 0$:
\begin{multline}\label{cint}
\int_0^t\big(p_u(0)-p_u(x))du=|x|^{\beta-1}\int_0^{t|x|^{-\beta}}\Big(p_1(0)-p_1\big(\frac{1}{u^{1/\beta}}\big)\Big)\frac{1}{u^{1/\beta}}du.
\end{multline}
Here $p$ is the $\beta$-stable transition density. Putting 
\begin{equation}\label{cdef}
c=\int_0^\infty \Big(p_1(0)-p_1\big(\frac{1}{u^{1/\beta}}\big)\Big)\frac{1}{u^{1/\beta}}du,
\end{equation}
which is finite (see~\cite{ROS}), and noticing that the first integral in~\eqref{cint} is bounded by a constant $c_1$ depending only on $t$ and $\beta$, we conclude that
\begin{equation}\label{pbound}
\int_0^t\big(p_u(0)-p_u(x))du \leq c_1\wedge c|x|^{\beta-1}, \quad x\neq 0.
\end{equation}
\end{lem}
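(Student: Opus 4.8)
The plan is to derive the identity~\eqref{cint} purely from the scaling and symmetry of the symmetric $\beta$-stable density, and then to read off the two bounds in~\eqref{pbound} from two elementary sign observations about the integrand. Recall that self-similarity of $\xi$, namely $\xi_u \overset{d}{=} u^{1/\beta}\xi_1$, translates into the scaling relation $p_u(y) = u^{-1/\beta}p_1(u^{-1/\beta}y)$ for all $u>0$ and $y \in \mathbb{R}$. In particular $p_u(0) = u^{-1/\beta}p_1(0)$, so that
\[
p_u(0) - p_u(x) = u^{-1/\beta}\big(p_1(0) - p_1(u^{-1/\beta}x)\big).
\]

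First I would establish~\eqref{cint}. Inserting the displayed scaling identity into the left-hand side and performing the change of variables $u = v|x|^\beta$ gives $u^{-1/\beta} = v^{-1/\beta}|x|^{-1}$ and $u^{-1/\beta}x = v^{-1/\beta}\operatorname{sgn}(x)$, and turns the upper limit $t$ into $t|x|^{-\beta}$. Since the one-dimensional symmetric stable density $p_1$ is even, $p_1(v^{-1/\beta}\operatorname{sgn}(x)) = p_1(v^{-1/\beta})$, so the sign of $x$ disappears, and the Jacobian $|x|^\beta$ combines with the factor $v^{-1/\beta}|x|^{-1}$ to produce the prefactor $|x|^{\beta-1}$; renaming $v$ back to $u$ yields exactly~\eqref{cint}. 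The finiteness of the constant $c$ in~\eqref{cdef} I would simply quote from~\cite{ROS}; were one to check it directly, near $u=0$ the integrand behaves like $p_1(0)u^{-1/\beta}$, which is integrable because $\beta\in(1,2)$ forces $1/\beta<1$, while near $u=\infty$ evenness and smoothness of $p_1$ give $p_1(0)-p_1(u^{-1/\beta}) = O(u^{-2/\beta})$, so the integrand is $O(u^{-3/\beta})$ with $3/\beta>1$.

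The two bounds in~\eqref{pbound} then follow from two sign observations, both resting on the unimodality of the symmetric $\beta$-stable density, i.e.\ $p_1(0) = \max_y p_1(y)$. For the bound $c|x|^{\beta-1}$, the integrand $(p_1(0)-p_1(u^{-1/\beta}))u^{-1/\beta}$ appearing in~\eqref{cint} is nonnegative, so extending the upper limit $t|x|^{-\beta}$ to $+\infty$ only increases the integral, giving $|x|^{\beta-1}\int_0^\infty(\cdots)\,du = c|x|^{\beta-1}$. For the bound $c_1$, I would work directly on the left-hand side: unimodality gives $0 \le p_u(0)-p_u(x)\le p_u(0)$, whence
\[
\int_0^t\big(p_u(0)-p_u(x)\big)\,du \le \int_0^t p_u(0)\,du = p_1(0)\int_0^t u^{-1/\beta}\,du = p_1(0)\,\tfrac{\beta}{\beta-1}\,t^{(\beta-1)/\beta},
\]
a constant $c_1$ depending only on $t$ and $\beta$. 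Taking the minimum of the two estimates is precisely~\eqref{pbound}.

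There is essentially no serious obstacle here: the proof is a one-line scaling computation followed by two monotonicity bounds. The only points needing a word of justification are the pointwise nonnegativity of the integrand, which is the unimodality of symmetric $\beta$-stable densities (a standard fact), and the finiteness of $c$, which I take from~\cite{ROS}; the change of variables and the elementary integral $\int_0^t u^{-1/\beta}\,du$ are routine.
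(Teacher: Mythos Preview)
Your proposal is correct and follows exactly the route the paper implicitly indicates; in fact the paper does not give a separate proof of this lemma at all, merely stating the scaling identity~\eqref{cint}, citing~\cite{ROS} for the finiteness of $c$, and remarking that the left-hand side is bounded by a constant $c_1=c_1(t,\beta)$. Your argument supplies precisely these missing details---the change of variables $u=v|x|^\beta$ combined with the scaling $p_u(y)=u^{-1/\beta}p_1(u^{-1/\beta}y)$ and the evenness of $p_1$ for~\eqref{cint}, and the unimodality bound $0\le p_u(0)-p_u(x)\le p_u(0)$ for $c_1$---so there is nothing to add.
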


Lemmas~\ref{localbound}, \ref{localboundmix} and \ref{lbound} below are consequences of Lemma~\ref{gbound} and the fact that $\int_0^t p_u(x)dy = \mathbb{E}L_t(x)$.

\begin{lem}\label{localbound}
For any $t>0$ and any positive $p>0$
\begin{equation}
\mathbb{E}|L_t(x)|^p < \infty,
\end{equation}
uniformly in $x \in \mathbb{R}$.
\end{lem}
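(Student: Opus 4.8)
The plan is to reduce to integer moments and then exploit the explicit moment formula of Lemma~\ref{lmoments}. Since $L_t(x)\ge 0$ we have $|L_t(x)|^p=L_t(x)^p$. For $p\in(0,1]$ the map $y\mapsto y^p$ is concave on $[0,\infty)$, so Jensen's inequality together with Lemma~\ref{gbound} gives $\mathbb{E}L_t(x)^p\le(\mathbb{E}L_t(x))^p\le C^p$, uniformly in $x$. For $p>1$ I would fix an integer $n>p$ and use the elementary inequality $y^p\le 1+y^n$, valid for all $y\ge 0$, to obtain $\mathbb{E}L_t(x)^p\le 1+\mathbb{E}L_t(x)^n$. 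Thus the whole statement reduces to bounding $\sup_{x\in\mathbb{R}}\mathbb{E}L_t(x)^n$ for every positive integer $n$.

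For the integer moments I would invoke the representation from Lemma~\ref{lmoments},
\begin{equation*}
\mathbb{E}L_t(x)^n = n!\int_0^t\int_{u_1}^t\cdots\int_{u_{n-1}}^t p_{u_1}(x)\,p_{u_2-u_1}(0)\cdots p_{u_n-u_{n-1}}(0)\,du_n\cdots du_1 .
\end{equation*}
The crucial observation is that the symmetric $\beta$-stable transition density attains its maximum at the origin, that is $p_s(y)\le p_s(0)$ for all $y\in\mathbb{R}$ and $s>0$ (immediate from the Fourier inversion $p_s(y)=\frac{1}{2\pi}\int_\mathbb{R}\cos(yw)e^{-s|w|^\beta}\,dw$ since $\cos(yw)\le 1$). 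Replacing $p_{u_1}(x)$ by $p_{u_1}(0)$ in the integrand yields $\mathbb{E}L_t(x)^n\le\mathbb{E}L_t(0)^n$ for every $x$, which is precisely the desired uniform bound, provided the right-hand side is finite.

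Finiteness of $\mathbb{E}L_t(0)^n$ follows from the scaling property $p_s(0)=p_1(0)s^{-1/\beta}$. Substituting $v_1=u_1$ and $v_k=u_k-u_{k-1}$ for $2\le k\le n$ turns the ordered simplex into $\{v_i>0,\ v_1+\cdots+v_n<t\}$ and the integrand into $p_1(0)^n\prod_{k=1}^n v_k^{-1/\beta}$, so that
\begin{equation*}
\mathbb{E}L_t(0)^n = n!\,p_1(0)^n\int_{\{v_i>0,\ v_1+\cdots+v_n<t\}}\prod_{k=1}^n v_k^{-1/\beta}\,dv_1\cdots dv_n .
\end{equation*}
This is a Dirichlet integral, which converges precisely because each exponent satisfies $-1/\beta>-1$, i.e. $1/\beta<1$; this is exactly where the assumption $\beta\in(1,2)$ enters. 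I do not expect any genuine obstacle here: the only points requiring care are the maximum-at-the-origin property and the convergence of the Dirichlet integral, both of which are routine, so the content of the lemma is essentially the uniform domination $\mathbb{E}L_t(x)^n\le\mathbb{E}L_t(0)^n$ combined with the finiteness of the moments at $0$.
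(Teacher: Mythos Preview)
Your proof is correct and follows essentially the same route as the paper: both reduce to integer moments via the formula in Lemma~\ref{lmoments} and then bound the iterated integral uniformly in $x$. The paper simply records this as a consequence of Lemma~\ref{gbound} together with the identity $\int_0^t p_u(x)\,du=\mathbb{E}L_t(x)$ (i.e.\ bounding each time-integral by $\mathbb{E}L_t(0)$), whereas you use the equivalent pointwise inequality $p_u(x)\le p_u(0)$ and make the resulting Dirichlet integral explicit; these are cosmetic differences, not a different argument.
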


\begin{rem}\label{localboundmix}
Using H\"older inequality and Lemma~\ref{localbound} we see that 
\begin{equation}
\mathbb{E}|L_t(x_1)\ldots L_t(x_m)| < \infty 
\end{equation}
uniformly in $x_1,\ldots,x_m \in \mathbb{R}$.
\end{rem}
\begin{lem}\label{lc}
For any $p \in (0,\infty)$ and $\beta \in (1,2)$ we have
\begin{equation}\label{lbound}
\int_\mathbb{R}\mathbb{E}\left|L_t(x)\right|^p dx <\infty.
\end{equation}
\end{lem}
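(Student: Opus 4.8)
The plan is to reduce the case of a general real exponent $p$ to integer moments, which are controlled directly by the explicit moment formula of Lemma~\ref{lmoments}, and to handle the ``small value'' part of $L_t(x)$ separately through the finiteness of the range of $\xi$. First I would record the elementary pointwise inequality, valid for every $s\ge 0$ and $p>0$,
\[
s^{p}\le \mathbf{1}_{\{s>0\}}+s^{\lceil p\rceil},
\]
(for $0<s\le 1$ the first term already dominates $s^p\le 1$, while for $s>1$ one has $s^p\le s^{\lceil p\rceil}$ since $\lceil p\rceil\ge p$). Applying this with $s=L_t(x)$, taking expectations and integrating in $x$ reduces the claim to showing that both $\int_{\mathbb R}\mathbb P(L_t(x)>0)\,dx$ and $\int_{\mathbb R}\mathbb E L_t(x)^{\lceil p\rceil}\,dx$ are finite.

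For the integer moment term I would use Lemma~\ref{lmoments} in the form
\[
\mathbb E L_t(x)^{n}=n!\int_0^t\int_{u_1}^t\cdots\int_{u_{n-1}}^t p_{u_n-u_{n-1}}(0)\cdots p_{u_2-u_1}(0)\,p_{u_1}(x)\,du_n\cdots du_1,
\]
where $x$ enters only through the last factor $p_{u_1}(x)$. Bounding the inner integral over $u_2,\dots,u_n$ (after the shift $v_j=u_j-u_1$) by the constant $c_n(t,\beta):=\int_{0<v_2<\cdots<v_n<t}p_{v_n-v_{n-1}}(0)\cdots p_{v_2}(0)\,dv$, which is finite because $p_s(0)=p_1(0)s^{-1/\beta}$ and $1/\beta<1$, yields the pointwise estimate $\mathbb E L_t(x)^{n}\le n!\,c_n(t,\beta)\,\mathbb E L_t(x)$. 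Integrating in $x$ and using $\int_{\mathbb R}\mathbb E L_t(x)\,dx=\int_0^t\int_{\mathbb R}p_u(x)\,dx\,du=t$ (equivalently, invoking the decay $\mathbb E L_t(x)\le C\wedge C'|x|^{-\beta-1}$ from Lemma~\ref{gbound}) gives $\int_{\mathbb R}\mathbb E L_t(x)^{n}\,dx\le n!\,c_n(t,\beta)\,t<\infty$ for every integer $n$, in particular for $n=\lceil p\rceil$.

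It remains to estimate $\int_{\mathbb R}\mathbb P(L_t(x)>0)\,dx$, which by Tonelli equals $\mathbb E\big[\mathrm{Leb}\{x:L_t(x)>0\}\big]$. Since the continuous occupation density $L_t$ vanishes identically on any open interval disjoint from the path, one has $\{x:L_t(x)>0\}\subseteq[\inf_{s\le t}\xi_s,\ \sup_{s\le t}\xi_s]$, so that $\mathrm{Leb}\{L_t>0\}\le 2\sup_{s\le t}|\xi_s|$. Thus the whole step comes down to checking $\mathbb E\sup_{s\le t}|\xi_s|<\infty$, and this is precisely where $\beta>1$ is used: $\mathbb E|\xi_t|^{q}<\infty$ for every $q<\beta$, and choosing $q\in(1,\beta)$ the (symmetric, mean-zero, integrable) martingale $(\xi_s)_{s\le t}$ satisfies Doob's $L^q$ maximal inequality, whence $\mathbb E\sup_{s\le t}|\xi_s|\le\big(\mathbb E\sup_{s\le t}|\xi_s|^{q}\big)^{1/q}\le\tfrac{q}{q-1}\big(\mathbb E|\xi_t|^{q}\big)^{1/q}<\infty$. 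Combining the two displays proves the assertion for all $p\in(0,\infty)$ simultaneously.

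The one genuinely delicate point is the small-exponent regime. The naive bound $\mathbb E L_t(x)^{p}\le(\mathbb E L_t(x))^{p}\le C|x|^{-(\beta+1)p}$ from Jensen's inequality is integrable at infinity only when $p>1/(\beta+1)$, so for $p\le 1/(\beta+1)$ one cannot avoid controlling the indicator $\mathbf 1_{\{L_t(x)>0\}}$, i.e. the expected Lebesgue measure of the support of the local time. This supremum-moment step is the part I expect to require the most care, and it is the only place where the hypothesis $\beta>1$ is essential.
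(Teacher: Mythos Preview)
Your proof is correct. The paper does not actually write out a proof of this lemma; it merely remarks that it is ``a consequence of Lemma~\ref{gbound} and the fact that $\int_0^t p_u(x)\,du=\mathbb{E}L_t(x)$.'' The implicit argument behind that remark is essentially your integer-moment step: the formula of Lemma~\ref{lmoments} together with the decay $\mathbb{E}L_t(x)\le C\wedge C'|x|^{-\beta-1}$ gives $\mathbb{E}L_t(x)^n\le c_n\,\mathbb{E}L_t(x)$, which is integrable in $x$; sandwiching $s^p\le s^{\lfloor p\rfloor}+s^{\lceil p\rceil}$ then covers all $p\ge 1$, and Jensen's inequality $\mathbb{E}L_t(x)^p\le(\mathbb{E}L_t(x))^p$ covers $p>1/(\beta+1)$. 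Where you go beyond the paper is in treating the remaining range $0<p\le 1/(\beta+1)$: you correctly observe that the Jensen bound $|x|^{-(\beta+1)p}$ is not integrable there, and you close the gap by controlling $\int_{\mathbb{R}}\mathbb{P}(L_t(x)>0)\,dx$ through the expected range of $\xi$ and Doob's $L^q$ inequality for some $q\in(1,\beta)$. That step is the only place the hypothesis $\beta>1$ is genuinely needed for the full range of $p$, and it makes your argument strictly more complete than the paper's one-line justification.
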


\begin{rem}\label{leqi}
For any $a>0$ and $z\in \mathbb{R}$ the process $(L_{ct}(z))_{t\geq 0}$ has the same law as $(c^{1-1/\beta}L_t(\frac{z}{c^{1/\beta}}))_{t\geq 0}$ (see Proposition 10.4.8 in~\cite{SPLRD}).
\end{rem}

\begin{lem}\label{trick}
For $\alpha \in (0,2)$, $x\in \mathbb{R}$ and any $R>0$ we have
\begin{equation}
c(\alpha)|x|^\alpha=\int_\mathbb{R}\left(1-e^{ixu}+ixu\mathbf{1}_{\{|u|\leq R\}}\right)\frac{du}{|u|^{1+\alpha}},
\end{equation}
where $c(\alpha)$ is a constant independent of $x$ and $R$.
\end{lem}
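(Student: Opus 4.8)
The plan is to reduce the complex integral to a manifestly real, nonnegative quantity and then extract the homogeneity in $|x|$ by a scaling substitution, after first recording that the value does not depend on the cutoff radius $R$. I would begin by checking absolute convergence for every $\alpha\in(0,2)$. Near $u=0$ a Taylor expansion gives $1-e^{ixu}+ixu=\tfrac12 x^2u^2+O(u^3)$, so the integrand is $O(|u|^{1-\alpha})$, integrable since $\alpha<2$; for $|u|>R$ the correction term drops out and the integrand is $O(|u|^{-1-\alpha})$, integrable since $\alpha>0$. Hence the right-hand side defines a finite number $I(x)$ for each $x$ and each $R>0$.

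Next I would prove that $I(x)$ is independent of $R$. If $0<R<R'$, the difference of the two integrals equals $\int_{R<|u|\le R'}\frac{ixu}{|u|^{1+\alpha}}\,du$, whose integrand is an odd function of $u$ integrated over a set symmetric about the origin, and therefore vanishes. This is precisely the role of the compensating term, and it lets me choose $R$ freely later. The same oddness argument applied to the whole line shows that the imaginary part of the integrand, namely $-\sin(xu)+xu\,\mathbf{1}_{\{|u|\le R\}}$, integrates to $0$ against the even weight $|u|^{-1-\alpha}$. Consequently $I(x)$ is real and equals $\int_{\mathbb{R}}\frac{1-\cos(xu)}{|u|^{1+\alpha}}\,du\ge 0$.

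The final step is the scaling computation. For $x>0$ I substitute $u=v/x$, which turns $|u|^{-1-\alpha}\,du$ into $x^{\alpha}|v|^{-1-\alpha}\,dv$ and turns the cutoff $\{|u|\le R\}$ into $\{|v|\le Rx\}$; by the $R$-independence just established, the resulting $v$-integral equals $I$ computed with cutoff radius $Rx$, which is the \emph{same} constant $c(\alpha):=\int_{\mathbb{R}}(1-\cos v)\,|v|^{-1-\alpha}\,dv$ for every $x$. Hence $I(x)=c(\alpha)x^{\alpha}$ for $x>0$. The substitution $u\mapsto -u$ shows $I(x)=I(-x)$, so $I$ is even and $I(x)=c(\alpha)|x|^{\alpha}$ for all $x$, with $I(0)=0$ matching. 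Since $c(\alpha)$ is defined by an integral in which neither $x$ nor $R$ appears, it is the claimed constant.

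The computations are elementary throughout; the only point demanding care is to carry the cutoff faithfully through the scaling substitution and to invoke the $R$-independence at exactly that moment, because otherwise the rescaled cutoff radius $Rx$ would spuriously make the constant appear to depend on $x$. Establishing $R$-independence up front is therefore the conceptual key that makes the scaling argument legitimate.
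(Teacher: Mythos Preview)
Your proof is correct. The paper does not actually supply a proof of this lemma; it is listed in the appendix among the facts whose ``proofs are relatively straightforward and we skip them,'' so there is nothing to compare against. Your argument---checking integrability, eliminating $R$-dependence via the oddness of $u\mapsto ixu\,|u|^{-1-\alpha}$, reducing to the manifestly real cosine integral, and then scaling---is the standard route, and your observation that one must invoke $R$-independence after the substitution (so that the rescaled cutoff $Rx$ does not contaminate the constant) is exactly the point that makes the computation clean.
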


\end{appendices}

\bibliographystyle{plain}

\bibliography{publications}

\begin{thebibliography}{1}

\bibitem{BILL}
Patrick Billingsley.
\newblock {\em Convergence of Probability Measures}.
\newblock John Wiley \& Sons, first edition, 1968.

\bibitem{BOJTALIV}
Tomasz Bojdecki, Luis~G. Gorostiza, and Anna Talarczyk.
\newblock A long range dependence stable process and an infinite variance
  branching system.
\newblock {\em Ann. Probab.}, 35(2):500--527, 2007.

\bibitem{BOJTALP}
Tomasz Bojdecki and Anna Talarczyk.
\newblock Particle picture interpretation of some {Gaussian} processes related
  to fractional brownian motion.
\newblock {\em Stochastic Processes and their Applications}, 122(5):2134--2154,
  2012.

\bibitem{CS}
Serge Cohen and Gennady Samorodnitsky.
\newblock Random rewards, fractional {Brownian} local times and stable
  self-similar processes.
\newblock {\em Ann. Appl. Probab.}, 16(3):1432--1461, 2006.

\bibitem{DG}
Clement Dombry and Nadine Guillotin-Plantard.
\newblock Discrete approximation of a stable self-similar stationary increments
  process.
\newblock {\em Bernoulli}, 15(1):195--222, 2009.

\bibitem{SAM2}
Paul Jung, Takashi Owada, and Gennady Samorodnitsky.
\newblock Functional central limit theorem for negatively dependent
  heavy-tailed stationary infinitely divisible processes generated by
  conservative flows.
\newblock {\em The Annals of Probability}, (4):2087--2130, 2017.

\bibitem{SAM1}
Takashi Owada and Gennady Samorodnitsky.
\newblock Functional central limit theorem for heavy-tailed stationary
  infinitely divisible processes generated by conservative flows.
\newblock {\em The Annals of Probability}, 43(1):240--285, 2015.

\bibitem{ROS}
Jay~S. Rosen.
\newblock Second order limit laws for the local times of stable processes.
\newblock {\em Semiaire de probabilites}, 25:407--424, 1991.

\bibitem{SPLRD}
Gennady Samorodnitsky.
\newblock {\em Stochastic Processes and Long Range Dependence}.
\newblock Springer Series in Operations Research and Financial Engineering.
  Springer International Publishing, first edition, 2016.

\end{thebibliography}

\end{document}